\documentclass[12pt]{amsart}
\usepackage{amssymb,amsmath,amsfonts,epsfig,latexsym,texdraw}
\usepackage[all]{xy}
\setlength{\oddsidemargin}{0in} \setlength{\evensidemargin}{0in}
\setlength{\marginparwidth}{0in} \setlength{\marginparsep}{0in}
\setlength{\topmargin}{0in} \setlength{\headheight}{0pt}
\setlength{\headsep}{23pt} \setlength{\footskip}{.3in}
\setlength{\textwidth}{6.5in} \setlength{\textheight}{9.2in}
\setlength{\parskip}{4pt}

\newtheorem{theorem}{Theorem}[section]
\newtheorem{definition}[theorem]{Definition}
\newtheorem{proposition}[theorem]{Proposition}
\newtheorem{lemma}[theorem]{Lemma}

\theoremstyle{definition}

\newtheorem{example}[theorem]{Example}

\def\Z{\ensuremath{\mathbb{Z}}}
\def\Q{\ensuremath{\mathbb{Q}}}
\def\P{\ensuremath{\mathbb{P}}}

\def\C{\ensuremath{\mathbb{C}}}

\def\R{\ensuremath{\mathbb{R}}}
\def\K{\ensuremath{\mathbb{K}}}

\def\k{\ensuremath{\mathbf{k}}}
\def\U{\ensuremath{\mathbf{u}}}
\def\V{\ensuremath{\mathbf{v}}}
\def\x{\ensuremath{\mathbf{x}}}

\def\ca{\ensuremath{\mathcal{A}}}

\def\cd{\ensuremath{\mathcal{D}}}

\def\cm{\ensuremath{\mathfrak{m}}}
\def\ccr{\ensuremath{\mathcal{R}}}
\def\cn{\ensuremath{\mathcal{N}}}

\def\excise#1{}

\def\<{\ensuremath{\langle}}
\def\>{\ensuremath{\rangle}}

\DeclareMathOperator{\UH}{UH}

 \DeclareMathOperator{\Hom}{Hom}

\DeclareMathOperator{\Span}{Span}
\DeclareMathOperator{\AffSpan}{AffSpan}

\DeclareMathOperator{\Face}{Face}
\DeclareMathOperator{\Conv}{Conv}
\DeclareMathOperator{\Supp}{Supp}
\DeclareMathOperator{\Star}{Star} 

\DeclareMathOperator{\Trop}{Trop}

\DeclareMathOperator{\trop}{trop}

\DeclareMathOperator{\Vol}{Vol}

\DeclareMathOperator{\st}{st}

\def\init#1#2{{{\operatorname{in}}_#1(#2)}}

\begin{document}

\title[Lifting Tropical Curves in Surfaces in $3$-space]{Obstructions to Lifting Tropical Curves in Surfaces in $3$-space}

\author[Bogart]{Tristram Bogart}
\address{Department of Mathematics, Universidad de los Andes, Bogot\'{a}, Colombia}
\email{tc.bogart22@uniandes.edu.co}

\author[Katz]{Eric Katz}
\address{Department of Combinatorics and Optimization, University of Waterloo, Waterloo, ON, Canada}
\email{eekatz@math.uwaterloo.ca}

\begin{abstract} Tropicalization is a procedure that takes subvarieties of an algebraic torus to balanced weighted rational complexes in space.  In this paper, we study the tropicalizations of 
 curves in surfaces in $3$-space.  These are balanced rational weighted graphs in tropical surfaces.   Specifically, we study the `lifting' problem: given a graph in a tropical surface, can one find a corresponding algebraic curve in a surface?   We develop specific combinatorial obstructions to lifting a graph by reducing the problem to the question of whether or not one can factor a polynomial with particular support in the characteristic $0$ case.  This explains why some unusual tropical curves constructed by Vigeland are not liftable.
\end{abstract}

\maketitle

\section{Introduction} \label{sec:intro}

Tropicalization is a procedure that takes an algebraic variety $V$
to a polyhedral complex $\Trop(V)$. Many algebraic properties of $V$ are reflected in the combinatorics of $\Trop(V)$.   In a certain sense, $\Trop(V)$ is special among polyhedral complexes in that it arose from an algebraic variety.  A natural question is {\em how special must a tropicalization be?}  In other words, {\em which  polyhedral complexes arise from algebraic varieties?}

Tropical geometry studies varieties over a field $\K$ with a non-trivial discrete
valuation $v:\K^*\rightarrow \Q$.  
If $V$ is a $d$-dimensional subvariety of the algebraic torus $(\K^*)^n$, then the tropicalization, $\Trop(V)\subset\R^n$ is a  
balanced weighted rational polyhedral complex of pure dimension $d$. 
Given such a complex
$\Gamma$, one may ask if there is an algebraic variety $V$
with $\Trop(V)=\Gamma$.  If so, we say $\Gamma$ \emph{lifts}.
This question is called the {\em lifting problem} in tropical geometry.  

The lifting problem is surprisingly subtle even for one-dimensional varieties.  A
one-dimensional balanced weighted rational polyhedral complex or 
\emph{tropical curve} is an edge-weighted graph in $\R^n$ with 
rational edge directions that satisfies a balancing condition at each vertex. Mikhalkin showed that balanced trees and more generally regular tropical curves always lift \cite{MikhICM}.  Speyer gave a sufficient condition for a
balanced graph with one cycle to lift to an algebraic curve of genus
one \cite{Spe07}.  His condition is also necessary when the
graph is trivalent and the residue field of $\K$ is of characteristic zero. The condition was
extended to certain higher-genus graphs by Nishinou \cite{N} and in 
forthcoming work of Brugall\'{e}-Mikhalkin and Tyomkin. The
second-named author has also found new obstructions for lifting tropical
curves in space \cite{KSpace}.   

In this paper, we consider the following two variants of the lifting problem:

\begin{definition} The {\em relative lifting problem} is the following: let $V$ be a variety in $(\K^*)^n$.  Let $\Gamma$ be a balanced weighted rational polyhedral complex contained in $\Trop(V)$.  Does there exist a subvariety $W\subset  V$ with $\Trop(W)=\Gamma$?
\end{definition}

\begin{definition} The {\em lifting problem for pairs} is the
  following: let $\Gamma$ and $\Sigma$ be balanced weighted rational
  polyhedral complexes with $\Gamma\subset\Sigma\subset\R^n$. Do there exist varieties $W\subset V\subset(\K^*)^n$ with $\Trop(W)=\Gamma$ and $\Trop(V)=\Sigma$?
\end{definition}

Specifically, we consider the case where $\Gamma$ is a tropical curve
and $V$ is a surface in $(\K^*)^3$ or $\Sigma$ is a tropical
surface in $\R^3$.  Answering lifting questions in this 
case is a prerequisite to using tropical curves to count classical
curves in surfaces in a way analogous to Mikhalkin's work on
curves in toric surfaces \cite{Mi03}.  We produce necessary conditions
for a pair consisting of a tropical curve $\Gamma$ in a unimodular tropical surface to
lift. In contrast to the previously mentioned results which gives
constraints on the structure of cycles in $\Gamma$, our conditions are
local: they depend only on the stars of the polyhedral complexes 
$\Gamma$ and $\Sigma$ at particular points. We use our conditions
to show that some very unusual tropical curves on tropical
surfaces exhibited by Vigeland \cite{Vigeland} do not lift.   

Our main result is the following: 

\begin{theorem} \label{maintheorem} Let
  $\Gamma\subset\Trop(V(f))$ be a tropical curve in a unimodular
  tropical surface in $\R^3$.  Suppose that $w$ is a vertex or an interior point of an edge of
  $\Trop(V(f))$ and that $\Star_w(\Gamma)$ spans a rational plane $U$.  If
  $\Gamma$ lifts in $V(f)$ then one of the following must hold: 
\begin{enumerate}
\item $\Gamma$ is locally equivalent to an integral multiple of the stable intersection $\Trop(V(f))\cap_{\st} U$, or
\item The stable intersection $\Trop(V(f))\cap_{\st} U$ contains a classical segment of weight $1$ with $w$ in its interior as a local tropical cycle summand at $w$. 
\end{enumerate}
\end{theorem}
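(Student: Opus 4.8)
The plan is to localize the problem at $w$ via initial degenerations, to rephrase the resulting local lifting question as one about factoring a Laurent polynomial with prescribed support, and then to conclude by tracking binomial factors. First I would pass to the initial degeneration: if $\Gamma$ lifts in $V(f)$, choose $W\subset V(f)$ with $\Trop(W)=\Gamma$; since initial forms are multiplicative the initial ideal of a principal ideal is principal, so $\init{w}{V(f)}=V(\init{w}{f})$, and $\init{w}{W}$ is a curve in it with $\Trop(\init{w}{W})=\Star_w(\Gamma)$ by the standard compatibility of tropicalization with Gr\"obner degeneration. So it suffices to show: every curve $C$ in the surface $S:=V(f_0)$, $f_0:=\init{w}{f}$, with $\Trop(C)$ a fan spanning $U$, has $\Trop(C)$ of type (1) or (2) in place of $\Star_w(\Gamma)$; and, because $W$ is a variety, $C=\init{w}{W}$ is connected.

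Next I would normalize the local surface and restrict it to a subtorus translate. Unimodularity of $\Trop(V(f))$ makes the cell of its dual subdivision at $w$ a unimodular simplex --- a tetrahedron when $w$ is a vertex, a triangle together with the edge direction when $w$ lies interior to an edge --- so after a substitution in $GL_3(\Z)$ and a rescaling we may take $f_0=1+x+y+z$, respectively $f_0=1+x+y$ (not involving $z$); I would treat the edge case by the same argument with one fewer variable and concentrate on $f_0=1+x+y+z$. Since $\Trop(C)\subset U$ is a rational plane and $C$ is connected, $C$ lies in a single translate $tT_U=\{x^n=c\}$ of the $2$-subtorus $T_U$ with $\Trop(T_U)=U$, where $n$ is the primitive normal of $U$. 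Restricting $f_0$ to $tT_U\cong(\K^*)^2$ through a splitting $\Z^3\cong\Z n\oplus\Z^2$ turns it into a Laurent polynomial $g$ in two variables, with support in $\pi(\{0,e_1,e_2,e_3\})$ for $\pi\colon\Z^3\to\Z^2$ the surjection with kernel $\Z n$; so $g$ has at most four terms and, as its coefficients are nonzero powers of the single element $c$, its Newton polygon is $Q:=\pi(\Conv(0,e_1,e_2,e_3))$ --- except when $n$ is one of the finitely many primitive differences $\pm e_i$ or $\pm(e_i-e_j)$, in which case $Q$ is the unimodular triangle and for finitely many $c$ the polynomial $g$ becomes a primitive binomial. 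The Sturmfels--Tevelev description of stable intersection with subtorus translates identifies $\Trop(V(f))\cap_{\st}U$, via $\pi$, with the dual fan of $Q$. Finally, $C$ is a union of components of $V(g)$, so $\Trop(C)=\sum_i m_i\cdot(\text{dual fan of }N(p_i))$ for the irreducible factors $p_i$ of $g$ appearing in $C$.

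Now the dichotomy. Since the $e_i$ span $\R^3$, $Q$ is never a segment, so it is a triangle or a convex quadrilateral. If $g$ is irreducible (and has Newton polygon $Q$), then $C$ is a positive multiple of $V(g)$, so $\Trop(C)$ is an integral multiple of the dual fan of $Q$, that is of $\Trop(V(f))\cap_{\st}U$: this is conclusion (1). If instead $g$ is reducible, the key lemma --- a four-term Laurent polynomial, if reducible, has a binomial factor --- produces a primitive binomial $b\mid g$; then $N(g)=N(b)+N(g/b)$ exhibits a primitive segment as a Minkowski summand of $N(g)$. When $N(g)=Q$ this makes the dual fan of $Q$, hence $\Trop(V(f))\cap_{\st}U$, contain a classical segment of weight $1$ with $w$ in its interior as a local tropical-cycle summand: conclusion (2). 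The remaining possibility, $N(g)$ a proper sub-polygon of $Q$, occurs only in the degenerate cases, where $g$ is a primitive binomial, so $V(g)$ is a translated subtorus and $C\subseteq V(g)$ has tropicalization a line rather than a plane --- ruled out by the hypothesis that $\Star_w(\Gamma)$ spans $U$. This exhausts all cases.

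I expect the key lemma --- that a reducible four-term polynomial has a binomial factor --- to be where the real work lies; for the polynomials $g$ at hand this should be provable by a direct analysis of the Minkowski decompositions of $N(g)$ together with a count of the terms of a product, the point being that two genuine trinomial factors would force a term of $g$ along an edge of $N(g)$ that its four-term shape cannot accommodate, and here the surjectivity of $\pi$ --- equivalently, the unimodularity of $\Trop(V(f))$ at $w$ --- is what prevents $N(g)$ from being twice a lattice polygon and thereby blocks the only delicate trinomial-times-trinomial configurations. The edge-point case is the two-dimensional shadow of all this, the surface being a generic line times $\K^*$; the same scheme applies verbatim with one coordinate fewer.
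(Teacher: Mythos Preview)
Your overall strategy --- pass to the initial degeneration, restrict to a two-dimensional subtorus translate since $\Star_w(\Gamma)\subset U$, and analyze the factorization of the restricted polynomial --- is exactly the paper's. But there is a real gap: the assertion that $C=\init{w}{W}$ is connected ``because $W$ is a variety'' is false. Initial degenerations of irreducible curves in a torus are frequently reducible and can be disconnected (e.g.\ $W=V\bigl((x-1)(x-2)+ty\bigr)\subset(\K^*)^2$ is irreducible while $\init{0}{W}=\{x=1\}\cup\{x=2\}$). You use connectedness to put all of $C$ into a \emph{single} translate $tT_U$ and hence to obtain one restricted polynomial $g$; without it, different irreducible components $C_i$ of $C$ may lie in different translates of $T_U$, and an individual component may even have linear rather than planar tropicalization, so your appeal to ``$\Star_w(\Gamma)$ spans $U$'' no longer rules out the degenerate binomial case component-by-component. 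The paper repairs exactly this: it decomposes $\init{w}{C}$ into irreducible components and applies the subtorus/factorization argument to the reduction of each $C_i$ separately; under the assumption that the stable intersection contains no classical line of weight $1$, every restricted polynomial $g_i$ is forced to be irreducible, so each $\Trop(C_i)$ equals the stable intersection and $\Star_w(\Gamma)=\sum_i\Trop(C_i)$ is an integer multiple of it.

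Your sketch of the key lemma is also not how the paper proceeds, and you understate its hypotheses. Rather than term-counting, the paper shows that a four-term $g$ with quadrilateral Newton polygon and $\AffSpan_\Z(\Supp g)=\Z^2$ has at most one singular point in $(\k^*)^2$ and that this point is a node (via a determinant and a Hessian computation); hence any two factors meet with intersection number at most $1$, and Bernstein's theorem together with an analysis of pairs of lattice polygons with mixed area $\le 1$ forces one factor to be a minimal binomial. Note also that your dichotomy needs a separate argument when $\Conv(\pi(\ca))$ is a triangle on four lattice points --- this genuinely occurs (for instance when the primitive normal is $n=(1,1,1)$) --- and there the paper invokes a direct irreducibility lemma rather than the binomial-factor lemma.
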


The proof proceeds by a series of reduction steps.  By replacing a
variety $V$ by its initial degeneration $\init{w}{V}$, we reduce to
the constant-coefficient case where varieties are defined over a field
$\k$ with trivial valuation. In this case, a unimodular surface
become a plane. By intersecting $\init{w}{V}$ with an
appropriately chosen toric surface, we reduce the situation to a problem 
in one dimension smaller.  Specifically, our surface in space is replaced by a curve in a toric surface.  This curve is fewnomial: the Laurent polynomial defining it has very few terms.
This is very close in sprit to the work of Khovanski\u\i \cite{KhoNewton} on relating the geometry of hypersurfaces to their support.
The tropical graph that we want to lift will be contained in the tropicalization of that curve.  Any lift must be a component of that curve.  We then apply combinatorial obstructions to this happening.

There is an obvious combinatorial obstruction for one
curve in a $2$-dimensional algebraic torus to be a component of another. If $f,g\in\k[x_1^\pm,x_2^\pm]$,
then knowing the tropicalizations of $V(f)$ and $V(g)$ is equivalent to
knowing the Newton polygons $P(f)$ and $P(g)$. In particular, if $g$
is a factor of $f$, then $P(g)$ must be a Minkowski summand of
$P(f)$. This condition is equivalent to $\Trop(V(g))$ being a tropical cycle
summand of $\Trop(V(f))$.  However, there are deeper obstructions that come from looking at the support of $f$ and $g$, that is, the set of monomials in their
expressions.  Knowing this support, we are able to say, in certain cases, that $g$ cannot possibly divide $f$ even though $P(g)$ is a Minkowski summand of $P(f)$.  

\excise{Our main philosophy of this paper is that tropical geometry is in a certain sense a higher codimension generalization of the theory of Newton polytopes.  In the theory of Newton polytopes, one associates to a multivariable polynomial, 
\[f=\sum_{\U\in\ca} a_{\U} x^{\U}\] 
where $\ca\subset\Z^n$ is a finite set and $a_\U\in\k^*$, the polytope $P(f)=\Conv(\ca)$.  One may ask whether $f$ has a non-trivial factorization $f=gh$.  A natural condition is that $P(f)$ can be written as a Minkowski sum $P(g)+P(h)$.   But there are more subtle conditions placed on the support set $\ca$ rather than its convex hull.  For example, we must have $\Supp(f)\subset \Supp(g)+\Supp(h)$ and all points in $(\Supp(g)+\Supp(h))\setminus\Supp(f)$ must be expressible as a sum in $\Supp(g)+\Supp(h)$ in at least two different ways to allow coefficients to cancel.   In this paper, we will reduce the tropical lifting problem to the polynomial factorization problem and make use of more subtle factorization obstructions.  }

We were inspired by the approach to the absolute lifting problem for
curves presented in the work of Nishinou-Siebert \cite{NS} which itself is a log geometry analogue of the patchworking approach of Mikhalkin \cite{Mi03} following Viro \cite{Viro}.  The
heuristic for studying lifting problems is as follows.  First, one uses the tropicalization of a curve as a blueprint to
construct a degeneration of the ambient algebraic toric variety to a broken
toric variety. In each component of the broken toric variety, one
construct components of a broken curve.  Then one matchs the components together
to create a global broken object. Finally, one uses
deformation theory to extend the broken object to a smooth object in
the algebraic torus.  The obstruction we study is 
a failure to
construct the components of the broken curve. There are likely to be
further obstructions in the other steps.  In fact, all lifting obstructions for curves in space previously known to us occur in the deformation theory step.

Since the preprint of this  paper first appeared, Brugall\'{e} and Shaw released a preprint offering a different approach to lifting obstructions for tropical curves in smooth tropical surfaces \cite{BrSh}.  Their approach is based on intersecting a tropical curve with a canonical divisor and with a Hessian divisor.  Among other things, they are able to use their results to 
give a classification of liftable balanced weighted fans in tropical planes in $\R^3$.  

We now outline the remainder of the paper. Section 2 reviews tropical
background.  Section 3 studies the intersection and containment of tropical varieties and rational subspsaces.  Section 4 addresses combinatorial obstructions to a hypersurface being contained in another.  Section 5 uses these obstructions to study the lifting problems of curves in planes in the constant coefficient case.  Section 6 reduces the case of tropical curves in unimodular surfaces to
the case considered in section 5.  Section 7 applies the lifting
criteria to Vigeland's curves.  Section 8 provides combinatorial
results about bivariate polynomials that are needed in section
4. Section 9 which is logically independent from the rest of the
paper gives an example of a tropical curve that fails to lift in one
surface yet lifts in another surface with the same tropicalization. This
shows that the relative lifting problem does not have a purely 
combinatorial resolution and is not equivalent to the lifting problem
for pairs. 

We would like to thank Henry Segerman for producing the figures and
David Speyer for suggesting that we look at singularities of polynomials
with four monomials.  We would like to thank Erwan Brugall{\'e}, Ethan Cotterill, Mark Gross, Diane Maclagan, Gregg
Musiker, Sam Payne, Johannes Rau, Kristin Shaw, Bernd Siebert, and Bernd Sturmfels
for valuable conversations. The authors were partially supported by
NSF grant DMS-0441170 administered by the Mathematical Sciences
Research Institute (MSRI) while they were in residence at MSRI during
the Tropical Geometry program, Fall 2009.  We would like to thank the
program organizers and MSRI staff for making the program possible.

\section{Tropical Background} \label{sec:tropical}

In this section, we review relevant aspects of tropical geometry.  We
recommend \cite{BJSST,KTT,RST,Spe07} as further references. 

Let $\K$ be a field with non-trivial discrete valuation $v:\K^*\rightarrow G\subset\Q$.  Let $\ccr\subset\K$ be the valuation ring.  Let $\cm$ be the maximal
ideal of $\ccr$, and let $\k=\ccr/\cm$ be the residue field which we require to be algebraically closed and of characteristic $0$.   We write $t$
for the uniformizer and we
suppose for the sake of convenience that there is a splitting
$G\rightarrow \K^*$ which we denote by $a\mapsto t^a$.
For $x\in\ccr$, we write $x|_{t=0}$ for its image in the residue field.  In the sequel, one may take $\K=\C((t))$, the field of Laurent series with 
$v(c_at^a + \textup{higher order terms}) =a$.  In this case $\ccr=\C[[t]]$ and $\k=\C$.

Let $\ca \subset \Z^n$ be finite.  A Laurent polynomial
$f\in\K[x_1^\pm,\dots,x_n^\pm]$ with \emph{support} $\ca$ is one of the form  \[f=\sum_{\U\in\ca} a_\U \x^\U, a_\U\neq 0.\]
The \emph{Newton polytope} $P(f)$ of $f$ is the convex hull of $\ca$
in $\R^n$.   For $f\in\K[x_1^\pm,\dots,x_n^\pm]$ and $w\in G^n$, write 
 \[f(t^{w_1}x_1,\dots,t^{w_n}x_n)=t^h g(x_1,\dots,x_n)\]
 where $g\in \ccr[x_1^\pm,\dots,x_n^\pm]$ and no positive power of $t$ divides
 $g$.  Then the \emph{initial form} $\init{w}{f}$ is given by
 $\init{w}{f}=g|_{t=0}$. Given an ideal $I\subset \K[x_1^\pm,\dots,x_n^\pm]$, the \emph{initial ideal} $\init{w}{I}$ is the ideal given by
\[\init{w}{I}=\langle \init{w}{f}|f\in I \rangle.\]  For $X=V(I)$, a
variety in $(\K^*)^n$, the \emph{initial degeneration} $\init{w}{X}$
is the variety $V(\init{w}{I})\subset(\k^*)^n$.
 
The initial form of $f$ can be understood in terms of the {\em Newton
  subdivision} of $P(f)$. To obtain it, consider the upper hull 
\[\UH=\Conv(\{(\U,h)|\U\in\ca,h\geq v(a_\U)\}) \subset \R^n\times \R .\]
Projecting the faces of $\UH$ to $\R^n$ gives a subdivision of $P(f)$, all of whose vertices are points of $\ca$.
The support of $\init{w}{f}$ is a particular cell in this subdivision.  Specifically, let $F$ be the face of $\UH$ on which the function $l(\U,h)=\U\cdot w+ h$ is minimized.   Then the set $\ca_w=\{\U\in\ca|(\U,v(a_\U))\in F\}$ is the support of $\init{w}{f}$.

Let $\overline{\K}$ denote the algebraic closure of $\K$.  Then $v(\overline{\K})=\Q$.  Since every variety in $(\overline{\K}^*)^n$ is defined over some finite extension of $\K$, we may apply initial degenerations to varieties $X\subset(\overline{\K}^*)^n$.
We extend the valuation by Cartesian product to $v:(\overline{\K}^*)^n\rightarrow \Q^n\subset\R^n$.
The \emph{tropicalization}, $\Trop(X)$, of a pure $d$-dimensional
variety $X\subset(\K^*)^n$ is the topological closure of the set $v(X_{\overline{\K}})$ in $\R^n$.
The tropicalization is a pure $d$-dimensional rational polyhedral
complex in $\R^n$ \cite{BG}.  Each top-dimensional cell $F$ of
$\Trop(X)$ is assigned a natural positive integer multiplicity or \emph{weight} $m(F)$
under which the complex is \emph{balanced} (in some of the literature,
this is
referred to as the zero tension condition) \cite[Sec. 10]{Spe07}.  When we want to consider only the set $\Trop(X)$ and not its multiplicities, we refer to the {\em underlying set} of the tropicalization.
When $X$ is a curve, $\Trop(X)$ is a graph with rational edge
directions, and the balancing condition is as follows: if $v$ is a
vertex of $\Trop(X)$ with adjacent edges $e_1,\dots,e_k$ pointing in
primitive integer directions $u_1,\dots,u_k$, then \[\sum_{i=1}^k m(e_i)u_i=0.\]
In general, we will use the name {\em tropical variety} to refer to balanced positively-weighted rational polyhedral complexes whether or not they are tropicalizations.

The {\em constant coefficient case} is that of varieties defined over a field $\k$ with trivial valuation.
Given $X\subset(\k^*)^n$, we may define its tropicalization by setting
$\K'=\k((t))$ and defining $\Trop(X)=\Trop(X\times_{\k} \K')$.  In
this case, the tropicalization is a polyhedral fan in $\R^n$.

Tropicalization is functorial with respect to  monomial morphisms.  Let $h:(\K^*)^{n_1}\rightarrow (\K^*)^{n_2}$ be a homomorphism of algebraic tori.  Such a map is called a {\em monomial morphism}. Let 
\[h^\vee:\Hom(\K^*,(\K^*)^{n_1})\cong \Z^{n_1}\rightarrow\Hom(\K^*,(\K^*)^{n_2})\cong \Z^{n_2}\]
 be the induced map on one-parameter subgroup lattices. Then for
 $X\subset (\K^*)^{n_1}$, $\Trop(h(X))=h^\vee_\R(\Trop(X))$ where 
\[h^\vee_\R:\R^{n_1}\cong \Z^{n_1}\otimes \R\rightarrow\R^{n_2}\cong
\Z^{n_2}\otimes\R\] is induced by tensoring with $\R$. We call an invertible monomial morphism a {\em monomial change of variables}.  It induces an integral linear isomorphism on one-parameter subgroup lattices.
Any monomial morphism induces a dual map on character lattices
\[h^\wedge:\Hom((\K^*)^{n_2},\K^*)\cong \Z^{n_2}\rightarrow\Hom((\K^*)^{n_1},\K^*)\cong \Z^{n_1}\]

There is a natural  sum operation on tropical varieties.  Given two balanced positively-weighted polyhedral complexes of the same dimension, $\cd,\cd'$, we define their {\em tropical cycle sum} $\cd+\cd'$ to be the tropical variety such that: the underlying set is $\cd\cup\cd'$; the polyhedral structure restricts to a refinement of the given polyhedral structures on $\cd,\cd'$; the weight of a top-dimensional cell in $\cd+\cd'$ is the sum of the weights on the cells containing it in $\cd$ and $\cd'$ (where the weight on a cell that does not belong to a given complex is taken to be $0$).  Note that tropical cycle sum is defined only up to refinement.

Tropicalization behaves well with respect to initial degenerations.
The fundamental theorem of tropical geometry \cite{PFibers} states
that for a point $w \in \Q^n$, $w\in\Trop(X)$ if and only if
$\init{w}{X}\neq\emptyset$. For $w\in\Trop(X)$, the \emph{star} of $X$
at $w$, denoted $\Star_w(\Trop(X))$, is the set of all $v\in\R^n$ such
that $w+\epsilon v\in\Trop(X)$ for all sufficiently small
$\epsilon>0$.  Then, $\Trop(\init{w}{X})=\Star_w(\Trop(X))$.  Note that $\Star_w(\Trop(X))$ inherits the structure of a balanced weighted fan from the balanced weighted polyhedral structure on $\Trop(X)$.   Looking at stars allow us to consider polyhedral complexes locally.  We will say that two weighted complexes $\cd,\cd'$ are
{\em locally equivalent} at a point $w$ if, after possible refinement, $\Star_w(\cd)$ and
$\Star_w(\cd')$ are identical.  Note that tropical cycle sum commutes with taking the star at a point $w$.  We say a weighted complex $\mathcal{E}$ is the local tropical cycle sum of $\cd,\cd'$ at $w$ if $\Star_w(\mathcal{E})=\Star_w(\cd)+\Star_w(\cd')$.

 
Tropicalizing hypersurfaces in $(\K^*)^n$ is straightforward.   For
$f$ a Laurent polynomial with support set $\ca$, we define the {\em tropical polynomial}
\[\trop(f)(w)=\min_{\U\in\ca} \left(w\cdot\U+v(a_\U)\right)\]
 which is a piecewise linear function. As a consequence of Kapranov's
 theorem  \cite[Thm 2.1.1]{EKL}, $\Trop(V(f))$ is equal to the corner
 locus of $\trop(f)$: that is, the subset of $\R^n$ on which the
 minimum is achieved by at least two values of $\U$. In the constant-coefficient case, 
$\Trop(V(f))$ is the positive codimension skeleton of the inner normal fan to the Newton polytope
$P(f)$. In particular, a top-dimensional cone in $\Trop(V(f))$
corresponds to an edge of $P(f)$, and its
multiplicity is the lattice length of that edge.

The theory of balanced codimension $1$ fans is a dual reformulation of the theory of Newton polytopes.   The weighted normal fan of the Minkowski sum of rational polytopes $P+Q$ is the tropical cycle sum of the normal fans $\cn(P),\cn(Q)$.  Therefore, if $P(g)$ is a Minkowski summand of $P(f)$ then $\Trop(V(g))$ is contained in $\Trop(V(f))$ as a tropical cycle summand. 

When we restrict to Newton polytopes in the plane, the situation becomes even simpler.  Any balanced positively-weighted rational codimension $1$ fan is the weighted normal fan to an integral polygon.  The edges of the polygon are normal to the rays of the fan; their order is given by the order of the rays of the fan traversed counterclockwise about the origin; their lattice lengths are given by the weights; and the condition that the edges close up is the balancing condition.  The tropicalization of a curve with that polygon as its Newton polytope will be the original fan.
Thus we get our first lifting result: any codimension $1$ balanced positively-weighted rational fan in $\R^2$ is the tropicalization of a curve in $(\k^*)^2$.
 

A {\em subtorus} $i:T\hookrightarrow (\K^*)^n$ is a monomial inclusion of $T\cong (\K^*)^{n'}$ such that $i^\vee(\Z^{n'})$ is a saturated sublattice of $\Z^n$.   We will also use subtorus to refer to inclusions of the form $i:T\hookrightarrow (\k^*)^n$ with $T\cong (\k^*)^{n'}$ satisfying the above property.  We will identify $T$ with its image under $i$ in the sequel.
   If $T$ is a subtorus of $(\K^*)^n$, then $\Trop(T)$ is the linear subspace 
\[\Trop(T)=\Hom(\K^*,T)\otimes\R\subset \Hom(\K^*,(\K^*)^n)\otimes\R\cong\R^n.\]
If $z\in (\K^*)^n$ then
\[\Trop(z\cdot X)=\Trop(X)+v(z)\]
where $\cdot$ denotes multiplication in the torus.

Tropicalization is linear on the underlying cycles of subschemes of
$(\K^*)^n$.  In other words, if $V$ is an irreducible non-reduced
scheme of length $k$ with reduction $\operatorname{Red}(V)$, then $\Trop(V)$ has the same underlying set as
$\Trop(\operatorname{Red}(V))$ with weights multiplied by $k$.
Additionally, if $V$ is a $d$-dimensional subscheme of $(\K^*)^n$ with
irreducible components $V_1,\dots,V_l$, then $\Trop(V)$ is the tropical cycle sum of the $\Trop(V_i)$'s.
  
Tropicalization does not always commute with intersection.
 It is true in general that $\Trop(X\cap Y)\subseteq \Trop(X)\cap \Trop(Y)$. There is a useful necessary condition for the reverse inclusion. 
Two tropical varieties $\Trop(X),\Trop(Y)$ are said to {\em intersect
  transversely at a point $w\in\Trop(X)\cap\Trop(Y)$} if $w$ is in the
relative interior of cells $F,G$ of $\Trop(X),\Trop(Y)$, respectively
such that $\Span_\R(F-w)+\Span_\R(G-w)=\R^n$. By extension, $\Trop(X)$
and $\Trop(Y)$ are said to {\em intersect transversely} if every point
of intersection of $\Trop(X)$ and $\Trop(Y)$ is a transverse
intersection point.  The transverse intersection lemma \cite[Lemma
3.2]{BJSST} states that if $w\in\Trop(X)\cap\Trop(Y)$ is a 
transverse intersection point, then $w\in\Trop(X\cap Y)$.  It follows that if $\Trop(X)$ and $\Trop(Y)$ intersect transversely, then $\Trop(X\cap Y)=\Trop(X)\cap\Trop(Y)$.
If $w$ is a point in a top-dimensional cell of $\Trop(X\cap Y)$ lying in transverse cells $F$, $G$ of $\Trop(X)$, $\Trop(Y)$, respectively, then the multiplicity of $\Trop(X\cap Y)$ at $w$ is the product $m_X(F)m_Y(G)[\Z^n:\Span_\Z(F-w)+\Span_\Z(G-w)]$.

In the case where the intersection is not transverse, we can still
define the \emph{stable intersection} \cite{RST,MikhICM}. Let $v$ be
a generically chosen vector in $\R^n$.  Then $\Trop(X)$ and
$\Trop(Y)+sv$ intersect transversely for small $s>0$.  The stable
intersection is defined to be the Hausdorff limit 
\[\Trop(X)\cap_{\st}\Trop(Y)=\lim_{s\to 0}\Trop(X)\cap
\left(\Trop(Y)+sv\right).\]  
This definition turns out to be independent of the choice of $v$.  By
adding weights when top-dimensional cells coincide in the limit,
we obtain weights on $\Trop(X)\cap_{\st}\Trop(Y)$.  Note that stable intersection commutes with taking the star at a point $w$.

The stable intersection of a subtorus and hypersurface is easy to compute.

\begin{lemma} \label{stablepoly} Let $f\in\k[x_1^\pm,\dots,x_m^\pm]$ be a polynomial with support set $\ca$ and let $i:T\hookrightarrow(\k^*)^n$ be a subtorus.  The stable intersection of $\Trop(V(f))$ and $\Trop(T)$ (viewed as a subcomplex of $\Trop(T)$) is the positive codimension skeleton of the normal fan to $\Conv(i^\wedge(\ca))$. 
\end{lemma}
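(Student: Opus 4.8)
The plan is to identify the stable intersection with the tropicalization of a generic restriction of $f$ to the subtorus $T$, and then read off the fan from Kapranov's theorem. I work over $\K=\k((t))$, as in the constant-coefficient definition of tropicalization, regarding $f$ as a Laurent polynomial on $(\K^*)^n$. Using the description of stable intersection by a generic perturbation, fix a generic vector $v\in\R^n$; for small $s>0$ we may write $\Trop(T)+sv=\Trop(z_s\cdot T)$, where $z_s\in(\K^*)^n$ has $v(z_s)=sv$ and $z_s\cdot T$ is the coset of $T$ through $z_s$. For generic $v$ the translate $\Trop(T)+sv$ meets $\Trop(V(f))$ transversely for all small $s>0$ (this is exactly what makes the stable intersection well defined), so by the transverse intersection lemma $\Trop(V(f))\cap(\Trop(T)+sv)=\Trop\bigl(V(f)\cap(z_s\cdot T)\bigr)$, with the stated transverse multiplicities.

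Next I compute the restricted equation. Since every point of $z_s\cdot T$ is uniquely $z_s\cdot i(y)$ with $y\in T\cong(\K^*)^{n'}$, translating $V(f)\cap(z_s\cdot T)$ by $z_s^{-1}$ and identifying it with a subvariety of $T$ gives the hypersurface $V(g_s)\subset(\K^*)^{n'}$ with
\[ g_s(y)=f\bigl(z_s\cdot i(y)\bigr)=\sum_{\U\in\ca}a_\U\,(z_s)^{\U}\,y^{\,i^\wedge(\U)} . \]
For $\V\in i^\wedge(\ca)$ the coefficient of $y^\V$ in $g_s$ is $\sum_{\U\in\ca,\ i^\wedge(\U)=\V}a_\U(z_s)^{\U}$, and the terms of this sum have valuations $s\,(v\cdot\U)$; since $v$ is generic these valuations are pairwise distinct, so the coefficient has a unique term of minimal valuation and is in particular nonzero. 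Hence $\Supp(g_s)=i^\wedge(\ca)$ and $v(\text{coefficient of }y^\V)=s\,c_\V$, where $c_\V=\min\{\,v\cdot\U:\U\in\ca,\ i^\wedge(\U)=\V\,\}$ does not depend on $s$. By Kapranov's theorem, $\Trop(V(g_s))$, viewed inside $\Trop(T)$ via the above identification, is the corner locus of $w'\mapsto\min_{\V\in i^\wedge(\ca)}\bigl(w'\cdot\V+s\,c_\V\bigr)$.

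Finally, let $s\to0^+$. The tropical polynomial above converges to $w'\mapsto\min_{\V\in i^\wedge(\ca)}w'\cdot\V$, the support function of $\Conv(i^\wedge(\ca))$; its corner locus is precisely the positive-codimension skeleton of the normal fan $\cn(\Conv(i^\wedge(\ca)))$, and the Hausdorff limit of the corner loci $\Trop(V(g_s))$ is this skeleton. Since by definition $\Trop(V(f))\cap_{\st}\Trop(T)=\lim_{s\to0}\Trop(V(f))\cap(\Trop(T)+sv)$, which we have identified (after undoing the translation by $sv$) with $\Trop(V(g_s))$, the underlying fan is as claimed. For the weights, one can either carry the transverse multiplicity $m_{V(f)}(F)\cdot[\Z^n:\Span_\Z(F-w)+\Trop(T)]$ through the limit, or use that the weights on $\Trop(V(g_s))$ are locally constant and add when top cells merge, so that in the limit they sum to the lattice lengths of the edges of $\Conv(i^\wedge(\ca))$ — exactly the standard weights on its normal fan.

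The main subtlety I anticipate is that the \emph{stable} intersection is essential here: the honest set-theoretic intersection $\Trop(V(f))\cap\Trop(T)$ can be strictly larger than the claimed fan (for instance when $\Trop(T)\subseteq\Trop(V(f))$ while $\Conv(i^\wedge(\ca))$ is lower-dimensional, caused by cancellation among the $a_\U$), so the generic perturbation cannot be dispensed with and must do two jobs at once — making the intersection transverse and destroying all coefficient cancellations in $g_s$. The remaining work is the bookkeeping of multiplicities through the $s\to0$ limit, which I expect to be the only genuinely technical point.
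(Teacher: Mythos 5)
Your proof is correct, and the underlying idea is the same as the paper's: realize the stable intersection as the tropicalization of the intersection of $V(f)$ with a generic translate of $T$, observe that the restricted equation has support exactly $i^\wedge(\ca)$ because genericity kills all cancellation among the coefficients $a_\U$, and conclude via Kapranov. The difference is in how the ``generic translate computes the stable intersection'' step is justified: the paper simply cites Osserman--Payne (Prop.\ 2.7.4 of \cite{OP}), which says that for a general $z\in(\k^*)^n$ with \emph{trivial} valuation one already has $\Trop(V(f))\cap_{\st}\Trop(T)=\Trop(V(f)\cap z\cdot T)$, so no limit is needed; you instead work from the definition of stable intersection, using a one-parameter family of translates $z_s$ with $v(z_s)=sv$, the transverse intersection lemma, and an explicit Hausdorff limit $s\to 0^+$ of the corner loci (which, after the substitution $w'=su'$, is just the recession fan computation). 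Your version is self-contained and makes visible exactly why genericity must do two jobs at once (transversality and non-cancellation), at the cost of the extra bookkeeping on multiplicities through the limit, which you handle correctly though tersely: one should note that the hypersurface weights on $\Trop(V(g_s))$ agree with the transverse intersection multiplicities used to define the stable intersection, which follows from the multiplicity statement of the transverse intersection lemma applied to the scheme $V(f)\cap z_s\cdot T\cong V(g_s)$. One cosmetic point: to have $sv=v(z_s)$ lie in the value group you should take $v$ generic in $\Q^n$ (or pass to $\overline{\K}$); this costs nothing since transversality and the pairwise distinctness of the $v\cdot\U$ over each fiber of $i^\wedge$ are open dense conditions already on rational $v$.
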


\begin{proof}
By \cite[Prop 2.7.4]{OP}, for general $z\in (\k^*)^n$,
\[\Trop(V(f))\cap_{\st}\Trop(z\cdot T)=\Trop(V(f)\cap z\cdot T).\]
Now, $V(f)\cap z\cdot T$ is cut out from $z\cdot T$ by a polynomial whose support set is $i^\wedge(\ca)$.    Consequently, $\Trop(V(f)\cap z\cdot T)$ is the positive codimension skeleton of $\cn(\Conv(i^\wedge(\ca)))$.
\end{proof}

\section{Tropical Varieties and Classical Subspaces}

In this section, we find conditions for tropical varieties and rational linear subspaces to be contained in each other.
 
Given a rational linear subspace $U\subset \R^n$, we may pick a 
subtorus $i:T\hookrightarrow(\k^*)^n$ with $\Trop(i(T))=U$.  A \emph{subtorus-translate} $H$ is a variety of the form $z\cdot T$ where 
$T$ is a subtorus and $z\in(\k^*)^n$.  Note that $\Trop(T)=\Trop(H)$. We will use $H^\wedge=\Hom(T,\k^*)$ to denote the 
character lattice of $T$.  The inclusion $i$ induces a projection $i^\wedge:\Z^n\rightarrow H^\wedge$ of 
character lattices.  

The following lemma will be used to reduce the dimension of certain lifting problems.

\begin{lemma} \label{subtori} Let $W\subset (\k^*)^n$ be an irreducible and reduced subvariety.  Let $i:T\hookrightarrow (\k^*)^n$ be a  subtorus.   If $\Trop(W)\subset \Trop(T)$ then there exists $z\in(\k^*)^n$ such that  $W\subset z\cdot T$. 
\end{lemma}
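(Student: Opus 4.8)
The plan is to pass to the quotient torus by $T$ and use functoriality of tropicalization. Write $n'=\dim T$ and $U=\Trop(T)=\Hom(\k^*,T)\otimes\R\subset\R^n$. Since $i^\vee(\Z^{n'})$ is a saturated sublattice of $\Z^n$, the quotient group $Q=(\k^*)^n/i(T)$ is again an algebraic torus, isomorphic to $(\k^*)^{n-n'}$, and the quotient homomorphism $\pi\colon(\k^*)^n\to Q$ is a monomial morphism whose map on cocharacter lattices sits in a short exact sequence $0\to i^\vee(\Z^{n'})\to\Z^n\xrightarrow{\pi^\vee}\Z^{n-n'}\to 0$ (exact on the right precisely because the sublattice is saturated). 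Tensoring with $\R$ keeps it exact, so $\ker(\pi^\vee_\R)=U$, and $\ker\pi=i(T)$, which we identify with $T$.

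Now I would argue as follows. Since $W$ is irreducible, so is the closure $\overline{\pi(W)}$, and functoriality gives $\Trop(\overline{\pi(W)})=\pi^\vee_\R(\Trop(W))$. By hypothesis $\Trop(W)\subseteq U=\ker(\pi^\vee_\R)$, so $\Trop(\overline{\pi(W)})=\{0\}$. Because the tropicalization of a subvariety of a torus has the same dimension as the subvariety (Bieri--Groves, as already recalled), $\overline{\pi(W)}$ is zero-dimensional; being also irreducible and reduced over the algebraically closed field $\k$, it is a single point $p\in Q$. As $W$ is nonempty, $\pi(W)=\{p\}$, hence $W\subseteq\pi^{-1}(p)$. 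Finally $\pi^{-1}(p)$ is a coset of $\ker\pi=T$, so picking any $z\in\pi^{-1}(p)\subset(\k^*)^n$ gives $\pi^{-1}(p)=z\cdot T$ and therefore $W\subseteq z\cdot T$, as required.

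The only step with real content is the assertion that a subvariety of a torus with tropicalization a single point is itself a single point, and this is just the dimension statement for tropicalizations together with algebraic closedness of $\k$; the rest is bookkeeping with lattices. The same argument can be made completely explicit by first performing a monomial change of coordinates so that $T=(\k^*)^{n'}\times\{1\}^{n-n'}$ and $U=\R^{n'}\times\{0\}^{n-n'}$, and then tropicalizing each of the last $n-n'$ coordinate functions on $W$: each such tropicalization is the restriction of a coordinate projection that annihilates $U\supseteq\Trop(W)$, so it is $\{0\}$, which forces the corresponding coordinate to be a nonzero constant on $W$; these constants are the coordinates of the desired point $z$. I do not expect any genuine obstacle here — the one point to handle carefully is that $\pi(W)$ need not be Zariski closed, so one works with its closure, which is harmless since $\pi^\vee_\R(\Trop(W))=\{0\}$ is already closed and the functoriality statement applies to the closure.
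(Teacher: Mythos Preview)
Your argument is correct and is essentially the paper's own proof, packaged slightly differently: the paper picks characters $\U$ that are constant on $T$ one at a time and shows each is constant on $W$, whereas you bundle all such characters into the quotient map $\pi:(\k^*)^n\to(\k^*)^n/T$ and argue once. Your ``explicit'' paragraph at the end is exactly the paper's version, so the two approaches coincide.
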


\begin{proof}
Let $\U\in \Hom((\k^*)^n,\k^*)$ be a character constant on $T$.  Then $\U$ is a monomial morphism and induces a map
$\U^\vee_\R:\R^n\rightarrow\R$.  Since
$\Trop(\U(W))=\U_\R^\vee(\Trop(W))$ is $0$-dimensional, $\U(W)$
is also $0$-dimensional, hence a point.  Consequently, $\U$ is
equal to a constant $z_\U\in\k$ on $W$. That is, $W$ is contained in the subtorus translate defined by $\U=z_\U$.
By applying this argument to the characters
cutting out $T$, we find that $W$ is contained in a translate of $T$.
\end{proof}

Let $f\in \k[x_1^\pm,\dots,x_n^\pm]$ be a Laurent polynomial given by
\[f=\sum_{\U\in\ca} a_\U x^\U\]
where $\ca\subset \Z^n$ is a finite support set and each $a_\U\neq 0$.  
Then $V(f)$ is a hypersurface in $(\k^*)^n$.  Its tropicalization
$\Trop(V(f))$ is the positive codimension skeleton of the inner normal fan to the Newton polytope $\Conv(\ca)$.  

Let $i:T\rightarrow(\k^*)^n$ be a subtorus.    We consider how $\Trop(V(f))$ can intersect the tropicalization of a torus, $\Trop(T)$.  We say that two polyhedral complexes intersect properly if $\dim(\Trop(V(f))\cap \Trop(T)))=\dim(\Trop(V(f)))+\dim(\Trop(T))-n$.
We consider the following possibilities:
\begin{enumerate}
\item $\Trop(T)$ is contained in $\Trop(V(f))$,
\item $\Trop(T)$ intersects $\Trop(V(f))$ non-properly, and
\item $\Trop(T)$ intersects $\Trop(V(f))$ properly.
\end{enumerate}
These geometric situations have algebraic consequences in terms of the support set $\ca$.  We consider the case of $\Trop(T)$ intersecting a certain tropical hypersurface properly.

\begin{lemma} \label{lemma:properint} Let $\ca$ be a set of $n+1$ affinely independent points in $\Z^n$.  Let $f$ be a Laurent polynomial with support set $\ca$.  Then the following are equivalent:
\begin{enumerate}
\item \label{i:inj} $i^\wedge$ is injective on $\ca$,
\item \label{i:prop} $\Trop(V(f))$ intersects $\Trop(T)$ properly,
\item  \label{i:trans} the top-dimensional cells of $\Trop(V(f))$ and $\Trop(T)$ meet transversely.
\end{enumerate}
In this case, $\Supp(i^*f)=i^\wedge(\ca)$ and $\Trop(V(f)\cap T)$ is the codimension $1$ skeleton of the normal fan to $\Conv(i^\wedge(\ca))$ in $\Trop(T)$.
\end{lemma}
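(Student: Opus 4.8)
The plan is to prove the chain of equivalences $(1)\Leftrightarrow(2)\Leftrightarrow(3)$ and then the final assertions about the support and tropicalization. Throughout, write $U=\Trop(T)$, a rational linear subspace of dimension $n'=\dim T$, and recall that $i^\wedge\colon\Z^n\to H^\wedge\cong\Z^{n'}$ is the induced surjection of character lattices, whose kernel is exactly the set of characters constant on $T$. Since $\ca$ consists of $n+1$ affinely independent points in $\Z^n$, the Newton polytope $\Conv(\ca)$ is an $n$-dimensional simplex, so $\Trop(V(f))$ is the codimension-$1$ skeleton of the normal fan of a simplex — combinatorially it is the cone over the boundary of an $(n-1)$-simplex, with $n+1$ maximal cones, each a codimension-$1$ rational subspace. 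This explicit description is what makes the various conditions checkable.

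First I would handle $(1)\Rightarrow(3)$ and $(1)\Rightarrow(\Supp(i^*f)=i^\wedge(\ca))$. If $i^\wedge$ is injective on $\ca$, then pulling $f$ back along $i$ (equivalently, restricting to $z\cdot T$ for generic $z$ and using that $\Supp$ is then $i^\wedge(\ca)$ exactly because no cancellation of monomials can occur) gives a polynomial $i^*f$ on $T$ whose support is the $(n+1)$-point affinely independent set $i^\wedge(\ca)\subset H^\wedge$; here affine independence of $i^\wedge(\ca)$ follows from injectivity together with affine independence of $\ca$, since an affine dependence among the images would pull back to one among the $\U\in\ca$ (the affine span has the right dimension $n'$, as $n+1$ affinely independent points can have at most an $n'$-dimensional image only if they stay affinely independent). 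Then $\Trop(V(i^*f))$ is the codimension-$1$ skeleton of the normal fan of the simplex $\Conv(i^\wedge(\ca))$ inside $U\cong\R^{n'}$, which has dimension $n'-1$; by the transverse intersection lemma (Lemma 3.2 of \cite{BJSST}) and the identity $\Trop(V(f)\cap z\cdot T)=\Trop(V(f))\cap_{\st}\Trop(z\cdot T)$ from Lemma~\ref{stablepoly}'s proof, this equals the stable intersection, and a dimension count $((n-1)+n'-n=n'-1)$ shows the intersection is proper; moreover since both complexes are unions of linear subspaces meeting in the expected dimension, a generic point of the intersection lies in the relative interiors of maximal cells whose spans add to $\R^n$, giving transversality, i.e. $(3)$.

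Next, $(3)\Rightarrow(2)$ is immediate from the transverse intersection lemma: transversality of top cells forces the set-theoretic intersection to have the expected dimension $n'-1$, so the intersection is proper. The remaining implication, and the one I expect to be the crux, is $(2)\Rightarrow(1)$ — equivalently the contrapositive: if $i^\wedge$ is \emph{not} injective on $\ca$, then $\Trop(V(f))\supseteq$ or meets $U$ non-properly. Here is the idea. If $i^\wedge(\U_1)=i^\wedge(\U_2)$ for two distinct points of $\ca$, then the character $\U_1-\U_2$ is constant on $T$, so $U$ lies in the hyperplane $\{w: w\cdot(\U_1-\U_2)=0\}$. Two of the $n+1$ linear normal-hyperplanes comprising $\Trop(V(f))$ are precisely the walls where the minimum in $\trop(f)$ is attained by (among others) both $\U_1$ and $\U_2$; one shows that the relevant wall (the cone dual to the edge $[\U_1,\U_2]$ of the simplex) is cut out by the single equation $w\cdot(\U_1-\U_2)=0$ together with inequalities, hence is a full codimension-$1$ subspace whose linear span \emph{contains} $U$. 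Intersecting $U$ with this cell therefore cannot drop dimension in the transverse way, and a careful bookkeeping of which cell of $\Trop(V(f))$ a generic point of $U$ meets shows the intersection has dimension $>n'-1$, i.e. the intersection is non-proper (case (1) of Lemma~\ref{lemma:properint}'s trichotomy is the extreme instance, when $U$ lies entirely in one wall). The main obstacle is making this dimension-jump argument clean: one must verify that the non-injectivity genuinely produces a lower-dimensional constraint on $U$ and translate that into the failure of properness, using the simplex structure of $\Conv(\ca)$ so that each wall is a genuine hyperplane rather than something of smaller dimension. Once $(1)$–$(3)$ are equivalent, the final claims $\Supp(i^*f)=i^\wedge(\ca)$ and the normal-fan description of $\Trop(V(f)\cap T)$ follow as in the $(1)\Rightarrow(3)$ step above, combining Lemma~\ref{stablepoly} with the transverse intersection lemma.
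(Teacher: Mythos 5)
Your overall strategy is the same as the paper's: everything is driven by the duality between fibers of $i^\wedge$ on $\ca$ and the position of $\Trop(T)$ relative to the linear spans of the walls of the normal fan of the simplex $\Conv(\ca)$, together with a dimension count relating properness to transversality and the observation that injectivity of $i^\wedge$ on $\ca$ precludes cancellation of monomials in $i^*f$. Two remarks on the details of your first step. The claim that $i^\wedge(\ca)$ is affinely independent is false and should be deleted: a projection of lattices \emph{creates} affine dependences rather than pulling them back, and $n+1$ points in a rank-$n'$ lattice with $n'<n$ are always affinely dependent (in the paper's application in Section 7, $\Conv(i^\wedge(\ca))$ is a quadrilateral, not a simplex). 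Nothing downstream needs it: the final statement only requires $\Supp(i^*f)=i^\wedge(\ca)$, which follows from injectivity exactly as you say, after which $\Trop(V(f)\cap T)=\Trop(V(i^*f))$ is the skeleton of the normal fan of $\Conv(i^\wedge(\ca))$ whatever polytope that is. Your detour through Lemma~\ref{stablepoly} and generic translates is likewise unnecessary; the paper just restricts $f$ to $T$. For $(\ref{i:inj})\Rightarrow(\ref{i:trans})$ the direct argument is cleaner than your ``generic point'' one: injectivity says $\U_j-\U_k\notin\ker i^\wedge$ for every edge, i.e.\ $\Trop(T)\not\subseteq(\U_j-\U_k)^\perp=\Span_\R(F_{jk})$, and since $\Span_\R(F_{jk})$ is a hyperplane this gives $\Span_\R(F_{jk})+\Trop(T)=\R^n$ for every wall $F_{jk}$.

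The genuine gap is the one you flag yourself, $\neg(\ref{i:inj})\Rightarrow\neg(\ref{i:prop})$, and the ``careful bookkeeping'' you defer cannot be carried out as described. Knowing $\Trop(T)\subseteq\Span_\R(F_{12})$, where $F_{12}$ is the wall dual to the edge $\{\U_1,\U_2\}$, does \emph{not} produce an $n'$-dimensional piece of $\Trop(V(f))\cap\Trop(T)$, because $\Trop(T)$ may meet the cone $F_{12}$ (and every other cell of the fan) only at the origin. Concretely, take $f=1+x_1+x_2+x_3$ and $\Trop(T)=\R\cdot(0,1,-1)$: then $i^\wedge$ identifies the exponents $(0,0,0)$ and $(1,0,0)$, yet $\Trop(V(f))\cap\Trop(T)=\{0\}$, which has the expected dimension $2+1-3=0$, so the intersection is proper under the paper's definition. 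The paper's own proof factors this direction as $\neg(\ref{i:inj})\Rightarrow\neg(\ref{i:trans})$, which is immediate once transversality of top-dimensional cells is read as the linear-span condition above, and then disposes of $(\ref{i:trans})\Leftrightarrow(\ref{i:prop})$ with a one-line dimension count; the example shows that the half $\neg(\ref{i:trans})\Rightarrow\neg(\ref{i:prop})$ is exactly where the difficulty sits and that neither your sketch nor the paper's count resolves it (one would need $\Trop(T)$ to meet the relative interior of $F_{12}$). The implications actually invoked later in the paper, namely $(\ref{i:inj})\Rightarrow(\ref{i:trans})\Rightarrow(\ref{i:prop})$ together with the final statement about $\Supp(i^*f)$ and $\Trop(V(f)\cap T)$, are sound in both your write-up and the paper's.
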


\begin{proof}
Note that $\dim(\Trop(V(f)))+\dim(\Trop(T))-n=\dim(T)-1$. Therefore, the intersection of $\Trop(V(f))$ and $\Trop(T)$ is non-proper if and only if a top-dimensional cell of the intersection is of dimension $\dim(T)$.  It follows that (\ref{i:prop}) and (\ref{i:trans}) are equivalent.

Suppose $i^\wedge(\U_1)=i^\wedge(\U_2)$ for some $\U_1,\U_2\in\ca$.  Then $\Trop(T)$ is orthogonal to $\U_1-\U_2$.  This, in turn, is equivalent to $\Trop(T)$ containing the face $F$ dual to the edge $\{\U_1,\U_2\}$ of $\Conv(\ca)$.  This implies that the intersection of $\Trop(T)$ and a top-dimensional face of $\Trop(V(f))$ is non-transverse, showing that (\ref{i:trans}) implies (\ref{i:inj}).  The converse is similar.

Suppose $i^\wedge$ is injective on $\ca$.  Then each exponent in $i^*f$ corresponds to a unique exponent of $f$. Consequently, the support of $i^*f$ is exactly $i^\wedge(\ca)$.  In that case, the tropicalization of $V(f)\cap T$ is $\Trop(V(i^*f))$ which is the positive codimension skeleton of the normal fan to $\Conv(i^\wedge(\ca))$. 
\end{proof}

Now, we turn to the case when a torus is contained in a hypersurface $V(f)$.  We get lifting obstructions by considering the support of $f$.

\begin{lemma} \label{containmentlemma} Let $T$ be a subtorus contained in $V(f)$.  Let $i^\wedge:(\Z^n)^\wedge\rightarrow T^\wedge$ be the natural projection.
Then $i^\wedge: \ca\rightarrow T^\wedge$ has no fibers consisting of a single point.  
\end{lemma}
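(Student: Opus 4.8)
The plan is to restrict $f$ to $T$ and apply linear independence of characters. Fix an isomorphism $T\cong(\k^*)^{n'}$ with coordinates $y_1,\dots,y_{n'}$; under the induced identifications, $i$ corresponds on character lattices to the surjection $i^\wedge:\Z^n\to T^\wedge\cong\Z^{n'}$. Pulling back $f$ along $i$ sends each monomial $\x^\U$ to the character $i^\wedge(\U)$ on $T$, so, collecting terms according to the fibers of $i^\wedge$,
\[
i^*f=\sum_{\U\in\ca} a_\U\, y^{\,i^\wedge(\U)}=\sum_{\chi\in i^\wedge(\ca)}\Bigl(\,\sum_{\U\in\ca,\ i^\wedge(\U)=\chi} a_\U\Bigr)\, y^{\chi}.
\]
This is the same computation used in the proof of Lemma~\ref{lemma:properint}.

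Since $T\subset V(f)$, the Laurent polynomial $i^*f$ vanishes identically on $T=(\k^*)^{n'}$. Distinct Laurent monomials are linearly independent as functions on a torus over the infinite field $\k$ — equivalently, $\k[y_1^{\pm},\dots,y_{n'}^{\pm}]$ is a domain in which a polynomial is the zero element only when all of its coefficients vanish. Hence, reading off the coefficient of each $y^{\chi}$ in the displayed expression, we get $\sum_{\U\in\ca,\ i^\wedge(\U)=\chi} a_\U=0$ for every $\chi\in i^\wedge(\ca)$.

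Finally, suppose toward a contradiction that some fiber of $i^\wedge$ on $\ca$ is a single point, say $(i^\wedge)^{-1}(\chi)\cap\ca=\{\U_0\}$. Then the relation above for this $\chi$ reduces to $a_{\U_0}=0$, contradicting the standing hypothesis that $a_\U\neq 0$ for all $\U\in\ca$. Therefore no fiber of $i^\wedge$ on $\ca$ is a singleton, as claimed. The only step that requires genuine care is the transition from the geometric hypothesis $T\subset V(f)$ to the vanishing of the grouped coefficients, namely identifying $i^*f$ with the honest restriction of $f$ to $T$ and invoking independence of characters; the remainder is bookkeeping with the fibers of $i^\wedge$.
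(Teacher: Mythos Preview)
Your proof is correct and follows essentially the same approach as the paper's: restrict $f$ to $T$, observe that $i^*f=0$, group monomials by the fibers of $i^\wedge$, and conclude that a singleton fiber would force a nonzero $a_\U$ to vanish. The paper phrases the vanishing coefficient as ``a linear combination with non-zero coefficients of the $a_\U$'s,'' while you compute it explicitly as $\sum_{i^\wedge(\U)=\chi} a_\U$ and invoke linear independence of characters; these are the same argument with slightly different levels of detail.
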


\begin{proof}
Let $i:T\hookrightarrow (\k^*)^n$ be the inclusion.  Because $T$ is
contained in $V(f)$, $i^*(f)=0$.  The support of the polynomial $i^*(f)$ is contained in $i^\wedge(\ca)$.  If $\V\in i^\wedge(\ca)$, the coefficient of $\V$ in $i^*f$ is zero.  However, this coefficient is a linear combination with non-zero coefficients of the $a_\U$'s for $\U\in (i^\wedge)^{-1}(\V)$.  For this linear combination to be equal to $0$, 
 $(i^\wedge)^{-1}(\V)$ cannot consist
of a single element.
\end{proof}

The above lemmas can be used to give non-existence results for lifting rational subspaces.  If $V(f)$ is a hypersurface and $U$ is a rational linear subspace contained in $\Trop(V(f))$, the reduction of any irreducible lift of $U$ must be a subtorus-translate by Lemma \ref{subtori}.  But such a subtorus-translate may be excluded by Lemma \ref{containmentlemma}.

\begin{example} Let $f(x_1,x_2,x_3)=1+x_1^2+x_2^2+x_3^2+x_1x_2$.  Observe that $P(f)$
is a dilate of the Newton polytope of $g=1+x_1+x_2+x_3$.  Consequently,
the underlying set of $\Trop(V(f))$ is the standard tropical plane.
Let $\Gamma$ be the line in $\Trop(V(f))$ passing through $(0,0,0)$
and $(0,1,1)$.  Because it is the tropicalization of the classical
line $\{x_1=-1,x_2=-x_3\}$, it lies on $\Trop(V(g))$ and hence on $\Trop(V(f))$.   Any lift of $\Gamma$ must be supported on a one-dimensional subtorus-translate $i:T\hookrightarrow (\k^*)^3$.  Therefore $i(T)\subset V(f)$.
Pick a coordinate on $\Z=\Hom(T,\k^*)$ such that $i^\wedge:\Z^3\rightarrow \Z$ is given by 
\[e_1\mapsto 0,\ e_2\mapsto 1,\ e_3\mapsto 1\]
where $e_1,e_2,e_3$ are the standard basis vectors for $\Z^3$.  Then
$i^\wedge(\ca)=\{0,1,2\}$, but $(i^\wedge)^{-1}(1)=\{(1,1,0)\}$, a
singleton.  Therefore $\Gamma$ does not lift in $V(f)$
\excise{Equivalently, one can also understand this case by parameterizing a potential lift of $\Gamma$ by 
\[t\mapsto (a,bt,ct)\]
for $a,b,c\in\k^*$.  Then $i^*(f)=1+a^2+b^2t^2+c^2t^2+abt$.  Since $t$ has a non-zero coefficeint, this polynomial cannot vanish everywhere. }

Note that $\Gamma$ does lift in $V(g^2)$ even though $\Trop(V(f))=\Trop(V(g^2))$.  This example shows that the relative lifting problem is not combinatorial in the sense that one cannot determine if $\Gamma$ lifts from knowing only $\Trop(V(f))$.  This illustrates the difference between the relative lifting problem and the lifting problem for pairs.
\end{example}

\section{Components of Hypersurfaces}

Now, we consider the case where $f$ is a Laurent polynomial over $\k$ with support $\ca\subset\Z^n$.  Let $\Sigma$ be a tropical hypersurface in $\R^n$ contained in $\Trop(V(f))$.  We find necessary conditions for $\Sigma$ to be the tropicalization of an irreducible component of $V(f)$.   Write $\Sigma=\Trop(V(g))$ for some Laurent polynomial $g$ where the Newton polytope of $g$ is determined by $\Sigma$.   Now, $V(g)$ is a component of $V(f)$  if and only if $g$ divides $f$.  A necessary condition for $g$ to divide $f$ is for $\Trop(V(g))$ to be a tropical cycle summand of $\Trop(V(f))$  This condition should be thought of as the coarsest combinatorial condition. We will use a finer combinatorial condition based on a strategy suggested to us by David Speyer.  If $f$ is reducible then either $f$ is generically non-reduced or $V(f)$ has several connected components.  The non-reducedness can often be ruled out by examining $\ca$ and similarly, if $V(f)$ has several irreducible components, they can often be shown to intersect in singular points of $V(f)$.  The number and types of singularities of $V(f)$ are constrained by the support of $f$.

We first consider the special case where $\ca\subset \Z^n$ has convex hull a simplex.  In this case, $\Trop(V(f))$ is combinatorially equivalent to a tropical hyperplane but with possibly non-trivial multiplicities on the faces.  The only tropical hypersurfaces contained in $\Trop(V(f))$ as tropical cycle summands are copies of $\Trop(V(f))$ with possibly smaller multiplicities on each face.  We show that under 
certain conditions on $\ca$, the only possibility for such a hypersurface is $\Trop(V(f))$.

\begin{lemma} \label{l:4triangle} Let $\ca\subset\Z^n$ be a  set satisfying
\begin{enumerate}
\item $|\ca|\leq 2n$,
\item the convex hull of $\ca$ is an $n$-dimensional simplex, and  
\item $\AffSpan_\Z(\ca)=\Z^n$. 
\end{enumerate}
If $f$ is a Laurent polynomial with support set $\ca$, then $f$ is irreducible.
\end{lemma}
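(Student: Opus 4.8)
The plan is to suppose $f = gh$ is a nontrivial factorization and derive a contradiction by combining the Minkowski-sum constraint on Newton polytopes with the hypothesis that the support $\ca$ is small and primitive. First I would normalize: after a monomial change of variables (which preserves irreducibility, the simplex structure, and all cardinalities), I may assume the simplex $\Conv(\ca)$ has vertex set $\{0, d_1 e_1, \dots, d_n e_n\}$ for positive integers $d_i$. The condition $\AffSpan_\Z(\ca) = \Z^n$ forces each $d_i = 1$ unless $\ca$ contains enough interior or boundary lattice points to generate the lattice; more precisely, since $\AffSpan_\Z(\ca)$ must be all of $\Z^n$ and $\ca$ has at most $2n$ points of which $n+1$ are the simplex vertices, there are at most $n-1$ additional points of $\ca$ available to cut down the $d_i$'s and to witness primitivity. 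I expect the upshot of this normalization step to be that $\Conv(\ca)$ is the standard unit simplex $\Delta_n$ (so $0, e_1, \dots, e_n \in \ca$) together with at most $n-1$ further lattice points in $\Delta_n$.

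Next I would analyze the Minkowski decomposition $P(f) = P(g) + P(h)$. Since $P(f) = \Delta_n$ is a simplex, any Minkowski summand is a scaled-down simplex of the same combinatorial type: $P(g) = \lambda \Delta_n$ and $P(h) = \mu\Delta_n$ with $\lambda + \mu = 1$, $\lambda, \mu > 0$ rational (up to translation, which I absorb by multiplying $g, h$ by monomials). Because vertices of $P(f)$ must be expressible uniquely as sums of a vertex of $P(g)$ and a vertex of $P(h)$ — at a vertex of a Minkowski sum the decomposition is unique — the vertices $0, e_1, \dots, e_n$ of $\Delta_n$ each arise from exactly one choice, and in particular $g$ and $h$ each must actually have monomials sitting at the corresponding vertices of $\lambda\Delta_n$ and $\mu\Delta_n$. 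Since $\lambda \Delta_n$ has $n+1$ vertices, $\Supp(g)$ has at least $n+1$ elements, and likewise $\Supp(h)$; moreover $\Supp(g) + \Supp(h) \supseteq \Supp(f) = \ca$, and every element of $(\Supp(g)+\Supp(h)) \setminus \ca$ must be hit in at least two ways (so the coefficients can cancel).

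The heart of the argument — and the step I expect to be the main obstacle — is a counting contradiction: I would show that if $\lambda \Delta_n$ and $\mu\Delta_n$ are genuine sub-simplices with $\lambda, \mu > 0$, then $|\Supp(g)| + |\Supp(h)|$ is forced to be too large, or the sumset $\Supp(g)+\Supp(h)$ is forced to contain more than $|\ca|$ lattice points (all of the extra ones needing double representations), and this collides with $|\ca| \le 2n$. Concretely, $\lambda$ and $\mu$ are rational with denominator bounded by the geometry; the $n+1$ vertices of $\lambda\Delta_n$ are lattice points only if $\lambda \in \Z$, which is impossible for $0 < \lambda < 1$ unless $\lambda\Delta_n$ is a point — but a point summand means $g$ is a monomial, contradicting nontriviality. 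So at least one vertex of $\lambda\Delta_n$ fails to be a lattice point, yet $\Supp(g) \subset \Z^n$; reconciling this requires $P(g)$ to be a lattice polytope strictly smaller than its own circumscribed simplex, which (since $P(g)$ is a Minkowski summand of a simplex, hence itself a simplex) is a contradiction. I would need to make the "summand of a simplex is a simplex" claim precise — it follows because a simplex is indecomposable modulo scaling: its normal fan is simplicial and the only way to write its support function as a sum of convex piecewise-linear functions with the same fan is a positive rescaling. Assembling these pieces, $\lambda$ or $\mu$ must be $0$, i.e. $g$ or $h$ is a monomial, so $f$ is irreducible. The delicate bookkeeping will be in handling the at-most-$(n-1)$ extra support points correctly and in the precise combinatorial lemma that a lattice Minkowski summand of $\Delta_n$ is either a point or all of $\Delta_n$.
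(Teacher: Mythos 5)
There is a genuine gap, and it sits exactly where the content of the lemma lies. Your reduction begins by putting the simplex in the form $\Conv\{0,d_1e_1,\dots,d_ne_n\}$; this is not achievable by a monomial change of variables, since a $GL_n(\Z)$ transformation cannot in general diagonalize the matrix of edge vectors of a lattice simplex (e.g.\ $\Conv\{(0,0),(1,0),(1,2)\}$ is not equivalent to any $\Conv\{0,d_1e_1,d_2e_2\}$). More seriously, your claimed ``upshot'' that $\Conv(\ca)$ is the standard unit simplex $\Delta_n$ is false: with $n=2$ and $\ca=\{(0,0),(1,0),(1,1),(1,2)\}$ all three hypotheses hold, yet the hull has normalized volume $2$. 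The step where you dismiss $d_i>1$ (``the condition $\AffSpan_\Z(\ca)=\Z^n$ forces each $d_i=1$ unless \dots'') is precisely the assertion that needs proof, and it is never carried out. Once one has (incorrectly) reduced to $P(f)=\Delta_n$, everything that follows in your proposal is superfluous, because the unit simplex admits no nontrivial decomposition as a sum of two lattice polytopes; so the ``heart of the argument'' you anticipate is not where the difficulty is.

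What is actually needed is the following. Since every Minkowski summand of a simplex is a translated dilate, and $P(g),P(h)$ must be \emph{lattice} polytopes, a nontrivial factorization forces $P(f)=kQ$ for some lattice simplex $Q$ and integer $k\geq 2$ (this is the correct replacement for your ``non-lattice vertex'' observation, valid for an arbitrary lattice simplex, not just $\Delta_n$). Translate a vertex of $P(f)$ to the origin; then the remaining vertices of $P(f)$ are $k$ times lattice vectors, so the sublattice $\Lambda$ they generate lies in $k\Z^n$, and $\Z^n/\Lambda$ surjects onto $(\Z/k)^n$, which requires $n$ generators. But $\Span_\Z(\ca)=\AffSpan_\Z(\ca)$ is generated by $\Lambda$ together with the at most $|\ca|-(n+1)\leq n-1$ non-vertex points of $\ca$, so it cannot be all of $\Z^n$, contradicting hypothesis (3). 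Your proposal gestures at this count (``at most $n-1$ additional points available'') but never turns it into the index argument; supplying it, and dropping the axis-aligned normalization and the $P(f)=\Delta_n$ claim, would repair the proof.
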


\begin{proof}
Because the only non-trivial Minkowski summands of $P(f)=\Conv(\ca)$ are dilates, for $f$ to have a non-trivial factorization, $P(f)$ must be a $k$-fold dilate of some simplex for $k\geq 2$.   By multiplying by a monomial, we may ensure that $P(f)$ has a vertex at the origin.  From this, we have $\AffSpan_\Z(\ca)=\Span_\Z(\ca)$.  Consequently, the integer span of the other vertices of $P(f)$ is a lattice $\Lambda$ in $\Z^n$ contained in $k\Z^n$.  The $|\ca|-(n+1)$ non-vertex points of $\ca$ cannot span $\Z^n/\Lambda$.  It follows that the affine span of $\ca$ is not $\Z^n$.  This contradiction shows that there can be no non-trivial factorization.
\end{proof}

\noindent Now we restrict ourselves to the case where $n=2$.  We apply the following lemma for which we make no claims to originality.

\begin{lemma} \label{l:3triangle} Suppose $\ca\subset\Z^2$ consists of three non-collinear points.  Then $f$ is irreducible.
\end{lemma}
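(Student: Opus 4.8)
The plan is to avoid relying on the Newton-polygon obstruction alone — it does not suffice here, since a lattice triangle $T$ can have a genuine Minkowski decomposition $T = \alpha T + (1-\alpha)T$ — and instead to put $f$ into a normal form in which irreducibility is a classical fact about polynomials of the shape $x_2^m - p(x_1)$.

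First I would normalize $f$. Multiplying by a monomial, which does not affect irreducibility, I may assume $\ca = \{0, u, v\}$ with $u, v$ linearly independent over $\Q$. Write $u = k u'$ with $u' \in \Z^2$ primitive and $k \geq 1$. Since $u'$ is primitive there is a monomial change of variables carrying $x^{u'}$ to $x_1$; it carries $f$ to a trinomial supported on $\{(0,0),(k,0),(v_1,v_2)\}$ with $v_2 \neq 0$, and after composing with $x_2 \mapsto x_2^{-1}$ if needed I may assume $v_2 = m \geq 1$. So it suffices to prove $f = a_0 + b x_1^k + c\, x_1^{v_1} x_2^m$ irreducible, and dividing by the unit $c\, x_1^{v_1}$ this is equivalent to irreducibility of
\[
F \;=\; x_2^m - p(x_1), \qquad p(x_1) \;=\; -\frac{a_0 + b x_1^k}{c\, x_1^{v_1}} \in \k[x_1^{\pm}].
\]
Because $\k$ has characteristic zero and $a_0, b \in \k^*$, the polynomial $a_0 + b x_1^k$ is squarefree and nonzero at $0$, so $p$ is a monomial times a squarefree polynomial of degree $k \geq 1$ with no root at $0$.

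Next I would prove $F$ irreducible. As $F$ is monic of degree $m \geq 1$ in $x_2$ with nonzero constant term, it is primitive over the UFD $\k[x_1^{\pm}]$ and is not divisible by $x_2$, so by Gauss's lemma it is enough to show $F$ is irreducible over the field $\k(x_1)$. Suppose $F = AB$ with $A, B \in \k(x_1)[x_2]$ of positive degree in $x_2$. Over $\overline{\k(x_1)}$ we have $F = \prod_{i=0}^{m-1}(x_2 - \zeta\omega^i)$, where $\zeta^m = p$ and $\omega$ is a primitive $m$-th root of unity, the $m$ roots being distinct because $p \neq 0$. Hence $A = c_A \prod_{i \in S}(x_2 - \zeta\omega^i)$ with $c_A \in \k(x_1)^*$ and $0 < |S| < m$. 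Setting $x_2 = 0$ shows $\zeta^{|S|} \in \k(x_1)$, so $p^{|S|}$ is an $m$-th power in $\k(x_1)^*$; but at a nonzero root of the squarefree part of $p$ the order of vanishing of $p^{|S|}$ is exactly $|S|$, which would then be a multiple of $m$ — contradicting $0 < |S| < m$. Hence $F$, and with it $f$, is irreducible.

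I expect the reduction bookkeeping and the divisibility count to be routine; the only substantive point is that a purely combinatorial argument on Newton polygons cannot finish the job, so one must use the ring structure, which is precisely what the passage to $x_2^m - p(x_1)$ buys. The characteristic-zero hypothesis enters exactly through the squarefreeness of $a_0 + b x_1^k$: in characteristic $\ell > 0$ the trinomial $1 + x_1^{\ell} + x_2^{\ell} = (1 + x_1 + x_2)^{\ell}$ shows the statement is false without it.
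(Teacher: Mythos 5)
Your proof is correct, but it takes a genuinely different route from the paper. The paper argues geometrically: after a monomial change of variables it arranges that $\partial f/\partial x_1$ is a single monomial, so $V(f)$ is smooth in the torus; then, since any nontrivial factors $g,h$ would have Newton polygons that are positive dilates of a common triangle, Bernstein's theorem (verified via Lemma~\ref{lem:intersectionintorus}) forces $V(g)\cap V(h)\neq\emptyset$, and such a common zero would be a singular point of $V(f)$ --- contradiction. You instead normalize to $F=x_2^m-p(x_1)$ and run the classical irreducibility argument for $x^n-a$ directly: Gauss's lemma reduces to $\k(x_1)$, and a proper factor would force $p^{|S|}$ to be an $m$-th power, which the order of vanishing at a simple nonzero root of $a_0+bx_1^k$ rules out. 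Both arguments are complete (your use of $\omega\in\k$ is justified since $\k$ is assumed algebraically closed, and $k\geq1$ guarantees the needed root exists). Your version is more elementary --- it avoids Bernstein's theorem entirely and isolates exactly where characteristic zero enters, with a clean counterexample in characteristic $\ell$ --- whereas the paper's smoothness-plus-Bernstein strategy is the one that scales to the four-term case of Proposition~\ref{prop:binomialfactor}, which is why the authors set it up this way; your normal-form trick is specific to trinomials.
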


\begin{proof}
We claim that $V(f)$ is a smooth curve.  By a monomial change of variables applied to $(\k^*)^2$, we may ensure that two points $\U_1,\U_2$ of $\ca$ lie on a line parallel to the $x_2$-axis.  Furthermore, we may translate $\ca$ by dividing $f$ by a monomial, and ensure that $\U_1,\U_2$ both have $x_1$ exponent equal to $0$.  Consequently, $\frac{\partial f}{\partial x_1}$ is a monomial and is never $0$ in $(\k^*)^n$.  It follows that $V(f)$ is smooth.

The only Minkowski summand of the triangle $P(f)$ is a dilate of $P(f)$.  Consequently, if $f=gh$ is a non-trivial factorization, then $P(g)$ and $P(h)$ are both dilates of some simplex $P$.  Write $P(g)=n_gP$, $P(h)=n_hP$. Lemma~\ref{lem:intersectionintorus} applies for all $w\in\Q^2\setminus\{0\}$.  Therefore, by Bernstein's theorem as stated as Theorem \ref{t:bernstein}, $V(g)$ and $V(h)$ must intersect in $(\k^*)^2$ with multiplicty $n_gn_h\cdot\Vol(P)$.  This implies $V(f)$ is singular, giving a contradiction.
\end{proof}

Now, we consider a case where $|\ca|=4$. The following lemma follows from Proposition \ref{prop:binomialfactor}.

\begin{lemma}  \label{l:containsaline} Suppose $\ca\subset\Z^2$ be a set with $|\ca|=4$ whose convex hull is a quadrilateral and $\AffSpan_\Z(\ca)=\Z^2$.  If $f$ is reducible then $\Trop(V(f))$ contains a classical line of weight $1$ as a tropical cycle summand.
\end{lemma}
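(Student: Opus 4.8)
The plan is to reduce the lemma to the existence of a \emph{primitive binomial factor} of $f$ --- a factor whose vanishing locus is $V(c+c'x^{\V})$ for some primitive $\V\in\Z^2$ and $c,c'\in\k^*$ --- and then to read off the conclusion directly. Indeed, suppose $b=c+c'x^{\V}$ divides $f$. Then $V(b)$ is a subtorus--translate, so $\Trop(V(b))$ is the classical line $\{w\in\R^2 : w\cdot\V=0\}$ with weight equal to the lattice length of $[0,\V]$, which is $1$. Moreover $P(b)=[0,\V]$ is a Minkowski summand of $P(f)$, so by the correspondence between Minkowski summands and tropical cycle summands recalled in Section~\ref{sec:tropical}, $\Trop(V(b))$ is a tropical cycle summand of $\Trop(V(f))$. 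That is exactly the required classical line of weight~$1$. Hence all the content lies in the claim that a reducible $f$ with this support admits a primitive binomial factor, which is Proposition~\ref{prop:binomialfactor}.

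To establish that claim I would argue as follows. Write a nontrivial factorization $f=gh$, so that the quadrilateral $P(f)=\Conv(\ca)$ is the Minkowski sum $P(g)+P(h)$ with both summands positive-dimensional. \textbf{Case 1:} one summand, say $P(h)$, is a segment lying in a primitive direction $\V$. Then, up to a unit monomial, $h$ is a one-variable polynomial in $x^{\V}$, hence --- $\k$ being algebraically closed --- a product of linear factors in $x^{\V}$; since the extreme coefficients of $h$ are nonzero, each such linear factor is a genuine binomial, and so a primitive binomial factor of $f$. \textbf{Case 2:} both $P(g)$ and $P(h)$ are $2$-dimensional. Comparing normal fans: the normal fan of $P(f)$ has four rays while those of $P(g)$ and $P(h)$ each have at least three, so every edge direction of each summand already occurs among the four edge directions of $P(f)$, and in particular $|\Supp(g)|,|\Supp(h)|\ge 3$. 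The elementary bound $|\Supp(g)+\Supp(h)|\ge|\Supp(g)|+|\Supp(h)|-1$, sharpened by the fact that equality in it forces the supports to be collinear, gives $|\Supp(g)+\Supp(h)|\ge 6$; but $\Supp(gh)=\ca$ has only four points, so at least two points of $\Supp(g)+\Supp(h)$ acquire a vanishing coefficient in $gh$ and hence each admit at least two representations as a sum of a point of $\Supp(g)$ and a point of $\Supp(h)$. The combinatorial bookkeeping of Section~8 then shows that such a cancellation pattern cannot coexist with $\AffSpan_\Z(\ca)=\Z^2$, so Case~2 does not occur.

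The hard part is precisely this last step: classifying the ways the supports of two ``triangle'' (or larger) polynomials can be arranged so that their product has only four surviving monomials, and checking that in each such arrangement the four exponents span a proper sublattice of $\Z^2$. A representative instance that must be ruled out is $P(g)=\Conv\{(0,0),(1,0),(0,1)\}$ and $P(h)=\Conv\{(0,0),(2,0),(0,1)\}$, which forces $P(f)=\Conv\{(0,0),(3,0),(2,1),(0,2)\}$, a quadrilateral with $\AffSpan_\Z=\Z^2$ satisfying all the hypotheses; writing $g$ and $h$ with indeterminate coefficients and imposing the vanishing in $gh$ of the four non-vertex lattice points $(1,0),(2,0),(0,1),(1,1)$ forces a relation of the shape $2\alpha\beta\gamma=0$ among nonzero coefficients, which is impossible since $\k$ has characteristic $0$. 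Once Case~2 is excluded, the rest of the argument is routine manipulation of Newton polytopes together with the established dictionary between Minkowski sums and tropical cycle sums.
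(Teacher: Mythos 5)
Your first paragraph and your Case 1 are correct and coincide with what the paper does: the lemma is exactly the statement that a minimal (primitive) binomial factor exists, i.e.\ Proposition~\ref{prop:binomialfactor}, plus the dictionary from Section~\ref{sec:tropical} identifying a lattice-length-one segment summand of $P(f)$ with a weight-$1$ classical line appearing as a tropical cycle summand of $\Trop(V(f))$. The factorization of a segment-supported polynomial into minimal binomials over the algebraically closed field $\k$ is also fine.

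The genuine gap is your Case 2. You defer its entire content to ``the combinatorial bookkeeping of Section~8,'' but no such bookkeeping exists in the paper and you do not supply it: classifying all ways two $2$-dimensional supports can multiply so that only four monomials survive is precisely the hard step, and you treat only one instance. Moreover, your stated criterion for that classification --- that every such arrangement forces the four exponents to span a proper sublattice of $\Z^2$ --- is refuted by your own example: there $\AffSpan_\Z(\ca)=\Z^2$ holds, and the contradiction is the coefficient identity $2a_1a_2b_0=0$, which uses that $\k$ has characteristic different from $2$, not any lattice condition. Indeed, in characteristic $2$ one has $(1+x+y)(1+x+x^2+y)=1+x^3+x^2y+y^2$, whose support is a quadrilateral with full affine span and whose Newton polygon has no pair of parallel edges, so the lemma is simply false there; hence no purely lattice-theoretic argument can close Case~2, and any support-theoretic proof must carry out a coefficient analysis in every configuration. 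The paper sidesteps all of this: it shows that the hypotheses (quadrilateral hull, $\AffSpan_\Z(\ca)=\Z^2$, characteristic $0$) force $V(f)$ to have at most one singular point, necessarily a node, so $V(g)$ and $V(h)$ meet with total multiplicity at most $1$; Bernstein's theorem then bounds the mixed volume of $P(g)$ and $P(h)$ by $1$, which excludes both factors being $2$-dimensional (their mixed area is at least $2$) and forces a minimal binomial factor. To complete your route you would need either the full case-by-case classification with coefficient computations, or an argument of this singularity/Bernstein type.
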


\begin{example} \label{ex:fourterm}
Let 
\[f(x,y)=A+Bx+Cy^3+Dx^2y\]
be a polynomial with $A,B,C,D\in\k^*$.
Then $\Trop(V(f))$ contains a standard tropical line $\Gamma=\Trop(V(1+x+y))$ as depicted in Figure \ref{f:fourterm}.  However, since $\Trop(V(f))$ does not contain a classical line, $\Gamma$ does not lift to a component of $\Trop(V(f))$ for any choice of $A,B,C,D$.  The restriction on $\ca$ is necessary here: the polynomial 
\[g(x,y)=(1+x+y)(1+y^2)=1+x+y+y^2+xy^2+y^3\]
 has the same Newton polygon as $f$, and $\Gamma$ lifts in $\Trop(V(g))$ by construction.
 
\begin{figure}
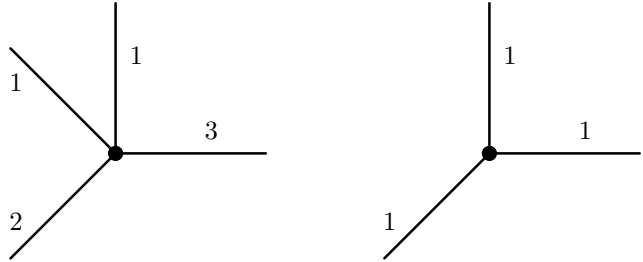
 \begin{center}
\begin{texdraw}
       \drawdim cm  \relunitscale 1
       \linewd 0.03
       \move(0 0) \fcir f:0 r:0.1
       \lvec(2 0)
       \move(0 0)
       \lvec(0 2)
       \move(0 0)
       \lvec(-1.4 1.4)
       \move(0 0)
       \lvec(-1.4 -1.4)
       \htext(1.2 .2){\footnotesize $3$}
       \htext(.2 1.2){\footnotesize $1$}
       \htext(-1.4 .85){\footnotesize $1$}
       \htext(-1.4 -1){\footnotesize $2$}
        
      \end{texdraw}
      \hspace{.5in}
\begin{texdraw}
       \drawdim cm  \relunitscale 1
       \linewd 0.03
       \move(0 0) \fcir f:0 r:0.1
       \lvec(2 0)
       \move(0 0)
       \lvec(0 2)
       \move(0 0)
       \lvec(-1.4 -1.4)
       \htext(1.2 .2){\footnotesize $1$}
       \htext(.2 1.2){\footnotesize $1$}
       \htext(-1.4 -1){\footnotesize $1$}
      \end{texdraw}
\end{center}
\caption{The tropical variety of $V(f)$ and $\Gamma$} \label{f:fourterm}
\end{figure}

\end{example}

\excise{\section{Classical lines in tropical hyperplanes} \label{sec:lines}}
\excise{
Let $J$ be the standard tropical hyperplane in $\R^n$, $n \geq 3$. 
That is, $J$ is an $(n-1)$-dimensional fan whose rays are the vectors
$e_0,e_1,\dots,e_n$ where $e_0 = -\sum_{i=1}^n e_i$. For each 2-subset
$I \subseteq \{0,1,\dots,n\}$ there is a maximal cone $J_I$ generated
by $\{e_i: i \notin I\}$. This fan is the positive codimension skeleton of
the inner normal fan of the standard simplex $\Delta_n = \Conv(0,e_1,
\dots, e_n)$. The symmetric group $S_{n+1}$ acts on $J$ by permuting
the rays, and this action respects the lattice $\Z^n$. Our object in
this section is to enumerate the classical lines through the origin in
$J$, which will play a role in Section~\ref{sec:obstructions}.  }

\excise{\begin{proposition}
The tropical hyperplane $J$ contains ${n+1 \choose 2,2} = \frac{1}{2}{n+1 \choose
  2}{n-1 \choose 2}$ irreducible families of lines through the origin. Each
family is a compact $(n-3)$-dimensional submanifold of the space $\R\P^n$
of all lines through the origin in $\R^{n+1}$.     
\end{proposition}}

\excise{\begin{proof} Let $L$ be a line through the origin contained in $J$ and $\ell=(a_1,\dots,a_n)$ be a
  nonzero vector along $L$. Then both $\ell$ and $-\ell$ lie
in cones of $J$. Up to $S_{n+1}$
symmetry, we may assume that $\ell \in J_{0,1}$ so $a_1=0$ and $a_2,\dots,a_n\geq 0$. Then 
\begin{eqnarray*} -\ell & = & (0, -a_2, \dots, -a_n)
  \\ & = & Ae_0 + Ae_1 + (A-a_2) e_2 + \dots + (A-a_n)
  e_n
\end{eqnarray*}
where $A=\max(a_2,\dots,a_n)$.  To have $-\ell\in J$, we must have $a_{i_1}=a_{i_2}=A$ for two values $i_1,i_2$.  Up to scaling we may assume $A=1$, so the $(n-3)$-tuple $(a_i)_{i\neq 0,i_1,i_2}$ with $a_i\leq 1$ uniquely specifies $L$. These are exactly the
lines with one half in $J_{0,1}$ and the other in
$J_{i_1,i_2}$. Since we can replace $\{0,1\}$ and $\{i_1,i_2\}$ by any
pair of disjoint 2-subsets of $\{0,1,\dots,n\}$, the number of
families of lines in $J$ is the multinomial coefficient ${n+1 \choose 2,2}$.
\end{proof}}

\excise{If $n=4$ there are ${5 \choose 2,2} = 30$ one-dimensional
families of lines, each family indexed by a pair of disjoint subsets
of $\{0,1,2,3,4\}$. There are twenty special lines
that lie at the intersection of three distinct families and all twenty
are related by the action of $S_5$. For instance, one of these lines has
direction vector $\ell = e_3 + e_4 $, so $\ell \in J_{0,1} \cap
J_{0,2} \cap J_{1,2}$ and $-\ell = e_0 + e_1 + e_2 \in J_{3,4}$. }

\section{Lifting obstructions on tropical planes in $3$-space} \label{sec:obstructions}

In this section, we study the obstructions for lifting tropical curves on tropical planes in $\R^3$.  Let $\ca\subset\Z^3$ be a subset of the vertices of a unimodular simplex in $\R^3$.  Let $f\in\k[x_1^\pm,x_2^\pm,x_3^\pm]$ be a Laurent polynomial with support  $\ca$.  Then the tropical surface $\Trop(V(f))\subset \R^3$ is well understood.  If $|\ca|=1$, then $\Trop(V(f))$ is empty.    If $|\ca|=2$, then $\Trop(V(f))$ is a rational plane.   If $|\ca|=3$ then $\Trop(V(f))$ is (up to integral linear
isomorphism) of the form $\Trop(L)\times\R$ where $\Trop(L)$ is the tropicalization of a generic line in the plane.  It consists of three half-planes meeting along a line.  If $|\ca|=4$, then $\Trop(V(f))$ is a generic tropical plane in space.  By a monomial change of variables and the action by an element of $(\k^*)^3$, $f$ can be put in the form  $f=1+x+y+z$.  Then $\Trop(V(f))$ is the positive codimension skeleton of the normal fan to the standard unimodular simplex.  It has rays in the directions $e_1,e_2,e_3,-e_1-e_2-e_3$ and a cone spanned by each pair of those rays.  There are three lines in $\Trop(V(f))$ which all pass through the origin and are given by the direction vectors $e_1+e_2$, $e_1+e_3$, and $e_2+e_3$.  

Let $\Gamma\subset\Trop(V(f))\subset\R^3$ be a one-dimensional balanced positively-weighted rational fan.  We will study conditions for lifting $\Gamma$ to a curve in $V(f)$ defined over $\k$.   If $|\ca|=2$, then  $\Gamma$ is a positively-weighted codimension $1$ fan in a rational plane.  Therefore, it always lifts to a curve.
In the cases $|\ca|=3,4$, and the linear span of $\Gamma$ is not $\R^3$, we will apply Lemma \ref{subtori} to reduce to the cases studied in the previous section.

 \begin{definition} The tropical curve $\Gamma$ is said to be planar (resp. linear) at the origin if the linear span of $\Gamma$ is a plane (resp. line).
\end{definition}

Note that any trivalent one-dimensional balanced fan is planar.  We may rephrase  Lemma \ref{subtori} as the following:

\begin{lemma} \label{l:picksubtorus} Let $\Gamma\subset\Trop(V(f))$ be a tropical curve.  If $\Gamma$ lifts to an irreducible and reduced curve $C\subset V(f)$, then there exists a subtorus translate $z\cdot T$ for $z\in (\k^*)^n$ such that 
$\Trop(T)=\Span_\R(\Gamma)$ and $C\subset V(f)\cap z\cdot T$.  
\end{lemma}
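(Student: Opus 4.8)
The plan is to deduce this lemma directly from Lemma~\ref{subtori} (the general-subtorus statement) together with the functoriality of tropicalization under monomial morphisms, so very little new work is needed.

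First I would reduce to the irreducible case already in the hypothesis: $C\subset V(f)$ is irreducible and reduced, so $\Trop(C)=\Gamma$ is a balanced weighted rational fan, and by definition $\Span_\R(\Gamma)$ is a rational linear subspace $U$ of $\R^3$. Next I would invoke the remark in Section~3 that every rational linear subspace $U$ is of the form $\Trop(i(T))$ for some subtorus $i:T\hookrightarrow(\k^*)^n$; fix such a $T$ with $\Trop(T)=U$. Since $\Gamma=\Trop(C)\subset U=\Trop(T)$, and $C$ is irreducible and reduced, Lemma~\ref{subtori} applies verbatim and produces $z\in(\k^*)^n$ with $C\subset z\cdot T$. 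Because $C\subset V(f)$ as well, we get $C\subset V(f)\cap (z\cdot T)$, which is exactly the conclusion once we record that $\Trop(z\cdot T)=\Trop(T)=\Span_\R(\Gamma)$ (using $\Trop(z\cdot T)=\Trop(T)+v(z)=\Trop(T)$ since $v$ is trivial on $\k^*$ in the constant-coefficient case, so $v(z)=0$).

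The only genuine content beyond citing Lemma~\ref{subtori} is the identification $\Span_\R(\Gamma)=\Trop(T)$: a priori $\Trop(C)$ could sit inside a proper subfan of $U$, but since we \emph{chose} $T$ so that $\Trop(T)$ equals the $\R$-span of $\Gamma$, this holds by construction rather than needing proof. I expect the main (and really only) subtlety is bookkeeping: making sure we are in the constant-coefficient setting so that $z\cdot T$ has the same tropicalization as $T$, and making sure the subtorus $T$ we pick has $\Trop(T)$ exactly the span of $\Gamma$ and not something larger — this is guaranteed because $\Span_\R(\Gamma)$ is rational, hence is $\Trop(T)$ for a suitable saturated sublattice. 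After that the statement is immediate. In short: pick $T$ with $\Trop(T)=\Span_\R(\Gamma)$, apply Lemma~\ref{subtori} to the inclusion $\Trop(C)\subset\Trop(T)$ to get $C\subset z\cdot T$, and combine with $C\subset V(f)$.
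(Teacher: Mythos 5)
Your proposal is correct and matches the paper exactly: the paper introduces this lemma with the words ``We may rephrase Lemma~\ref{subtori} as the following,'' i.e.\ it is obtained precisely by choosing a subtorus $T$ with $\Trop(T)=\Span_\R(\Gamma)$ and applying Lemma~\ref{subtori} to the inclusion $\Trop(C)=\Gamma\subset\Trop(T)$. Your bookkeeping about the constant-coefficient setting and the saturated sublattice is also the right justification for the choice of $T$.
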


In the case where $\Gamma$ is linear and lifts to a curve $C$ then the reduction of $C$ must be a one-dimensional torus translate.  In particular, $\Gamma$ lifts to an irreducible and reduced curve if and only if $\Gamma$ is of weight $1$.


We now consider the case where $\Gamma$ is planar.  The main result of this section is the following:

 \begin{proposition} \label{prop:starsin3space} Let $f\in\k[x_1^\pm,x_2^\pm,x_3^\pm]$ be a Laurent polynomial with support set $\ca$ consisting of at least three points of a unimodular simplex in $\R^3$.   Let $\Gamma$ be a planar balanced weighted $1$-dimensional fan in $\Trop(V(f))$.  Suppose $\Gamma$ lifts to an irreducible and reduced curve $C$ in $V(f)$.  Then, one of the following must hold (where $\Span_\R(\Gamma)$ is considered as a balanced fan with multiplicity $1$):
\begin{enumerate}
\item $\Gamma$ is equal to $\Trop(V(f))\cap_{\st} \Span_\R(\Gamma)$, or \label{isstabint}
\item $\Trop(V(f))\cap_{\st} \Span_\R(\Gamma)$ contains a classical line of weight $1$ as a  tropical cycle summand. \label{classicalsegment}
\end{enumerate}
\end{proposition}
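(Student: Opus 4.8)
The plan is to reduce to the two-dimensional situation treated earlier via Lemma~\ref{l:picksubtorus}, and then apply the classification of reducible fewnomials from Section~4. First I would use Lemma~\ref{l:picksubtorus}: since $\Gamma$ lifts to an irreducible and reduced curve $C\subset V(f)$, there is a subtorus translate $z\cdot T$ with $\Trop(T)=\Span_\R(\Gamma)=:U$ and $C\subset V(f)\cap z\cdot T$. After translating by $z^{-1}$ (which shifts everything and changes nothing up to the $(\k^*)^3$-action), I may assume $C\subset V(f)\cap T$. Pulling back along the inclusion $i:T\hookrightarrow(\k^*)^3$, the curve $C$ is a component of $V(i^*f)\subset T\cong(\k^*)^2$, so if $g$ is an irreducible factor of $i^*f$ cutting out (the reduction of) $C$ then $g\mid i^*f$. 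On the tropical side, $\Trop(C)=\Gamma$ (viewed inside $U$) is a tropical cycle summand of $\Trop(V(i^*f))$, and by Lemma~\ref{stablepoly}, $\Trop(V(i^*f))=\Trop(V(f))\cap_{\st}U$ (as a subcomplex of $U$), which is the positive-codimension skeleton of the normal fan to $\Conv(i^\wedge(\ca))$.

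Next I would analyze $i^\wedge(\ca)\subset T^\wedge\cong\Z^2$ according to $|\ca|$ and the geometry of $\Conv(i^\wedge(\ca))$. Because $\ca$ consists of at least three vertices of a \emph{unimodular} simplex in $\R^3$, its image $i^\wedge(\ca)$ has at most $4$ points and, by the unimodularity, $\AffSpan_\Z$ of any affinely independent subset is saturated in $T^\wedge$; moreover $\Supp(i^*f)$ equals $i^\wedge(\ca)$ precisely when $i^\wedge$ is injective on $\ca$ (Lemma~\ref{lemma:properint}), which is the non-trivial case to handle. If $i^\wedge$ is not injective on $\ca$ — say two points collide — then $\Conv(i^\wedge(\ca))$ is a segment or a triangle and $i^*f$ may have a monomial factored out or coefficients that cancel; here one checks directly that $\Trop(V(i^*f))$ either forces $\Gamma$ to equal the whole stable intersection or exhibits a classical line summand (the collapsed edge direction), so alternative (1) or (2) holds. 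If $i^\wedge$ is injective and $\Conv(i^\wedge(\ca))$ is a triangle with $|i^\wedge(\ca)|=3$, then Lemma~\ref{l:3triangle} says $i^*f$ is irreducible, so $C=V(i^*f)\cap T$ and $\Gamma=\Trop(V(i^*f))=\Trop(V(f))\cap_{\st}U$, giving alternative (1). The remaining case is $|i^\wedge(\ca)|=4$ with $\Conv(i^\wedge(\ca))$ a quadrilateral (the triangle-with-one-interior-point and the three-collinear-plus-one cases being handled as above or by Lemma~\ref{l:4triangle}-type arguments after a monomial change of variables): then $i^*f$ is reducible, and Lemma~\ref{l:containsaline} gives exactly that $\Trop(V(i^*f))=\Trop(V(f))\cap_{\st}U$ contains a classical line of weight $1$ as a tropical cycle summand — alternative (2).

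The main obstacle I expect is the bookkeeping needed to make the reduction clean: verifying that after the translation by $z$ one genuinely lands in the constant-coefficient, fewnomial setting of Section~4, that the "subcomplex of $\Trop(T)$" identifications in Lemma~\ref{stablepoly} and Lemma~\ref{lemma:properint} are compatible with the identification $\Trop(T)\cong U$, and — most delicately — ruling out or correctly interpreting the degenerate configurations where $i^\wedge$ fails to be injective or where $\Conv(i^\wedge(\ca))$ degenerates to a lower-dimensional polytope. In those degenerate cases $i^*f$ can be a fewnomial in two variables whose Newton polygon is a segment (so $V(i^*f)$ is a union of "vertical" lines, each a torus translate, and $\Gamma$ is then a multiple of a classical line, landing in case (2)) or a triangle already covered by Lemma~\ref{l:3triangle}; assembling these sub-cases and confirming that each lands in (1) or (2) is the part requiring care rather than cleverness. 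The genuinely non-trivial input, the quadrilateral case, is exactly what Proposition~\ref{prop:binomialfactor} (via Lemma~\ref{l:containsaline}) is designed to supply.
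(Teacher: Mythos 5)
Your proposal follows the paper's proof almost exactly: reduce via Lemma~\ref{l:picksubtorus} to a component of $V(i_z^*f)$ in a two-dimensional torus, identify $\Trop(V(i_z^*f))$ with the stable intersection via Lemma~\ref{stablepoly}, and then invoke Lemmas~\ref{l:3triangle}, \ref{l:4triangle} and \ref{l:containsaline} according to whether $P(i_z^*f)$ is a triangle or a quadrilateral. The one place you diverge is the treatment of the degenerate configurations (where $i^\wedge$ is non-injective on $\ca$, coefficients cancel, or $\Conv(i^\wedge(\ca))$ degenerates to a segment), and there your resolution is not right: you claim that when $P(i^*f)$ is a segment, ``$\Gamma$ is then a multiple of a classical line, landing in case (2),'' but if $\Gamma$ were contained in a classical line it would be linear, contradicting the standing hypothesis that $\Gamma$ is planar; moreover case (2) is a statement about the stable intersection containing a line summand, not about $\Gamma$ itself being a line. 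The paper's observation is precisely that planarity of $\Gamma$ makes all of these degenerate cases vacuous: a cancellation forces $|\Supp(i_z^*f)|\leq|\ca|-2\leq 2$, so $V(i_z^*f)$ is empty or a torus translate, and a segment Newton polygon likewise yields a classical line---neither can contain the planar fan $\Gamma$. Replacing your ``one checks directly'' paragraph with this contradiction argument closes the only loose end; the rest of your argument is sound and matches the paper's.
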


Note that the condition $\Gamma=\Trop(V(f))\cap_{\st}\Span_\R(\Gamma)$ puts conditions on the multiplicities on edges of $\Gamma$ even when the condition that $\Gamma\subset \Trop(V(f))\cap \Span_\R(\Gamma)$ determines $\Gamma$ set-theoretically.  Using the fact that a non-degenerate tropical plane in $\R^3$ contains only three classical lines, Brugall\'{e} and Shaw have classified all liftable graphs in case (\ref{classicalsegment}) \cite[Thm 6.1]{BrSh}.  

We now prove the proposition.  Suppose that $\Gamma$ lifts to a curve $C$.  Lemma \ref{l:picksubtorus} guarantees a two-dimensional subtorus-translate $z\cdot T$ containing $C$.   Let $i_z:T\rightarrow (\k^*)^n$ be the inclusion of $T$ followed by translation by $z$.  Then $i_z^{-1}(C)$ is a subvariety of $V(i_z^*f)$.  If $V(i_z^*f)$ is a curve in the toric surface $T$, we can use the results of the above section to provide lifting obstructions.  The first step is to determine the support of $i_z^*f$.  A priori, we know only that the support is contained in $i_z^\wedge(\ca)$ because several monomials in $f$ may map by $i_z^\wedge$  to the same monomial leading to a cancellation among the coefficients of $i_z^*f$.  Fortunately, this does not occur by the following lemma:

\begin{lemma} If $\Trop(V(f))$ is planar, then $\Supp(i_z^*f)=i_z^\wedge(\ca)$ and $P(i_z^*f)$ is a polygon.  Furthermore, $\Trop(V(i_z^*f))=(i^\vee)^{-1}(\Trop(V)\cap_{\st} \Trop(T))$.
\end{lemma}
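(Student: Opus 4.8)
The plan is to show the three claimed facts in sequence, extracting everything from the hypothesis that $\Trop(V(f))$ is planar, i.e.\ that $\ca$ spans an affine plane in $\R^3$ and hence, after a monomial change of variables, lies in a coordinate plane. The first task is to pin down $\Supp(i_z^*f)$. A priori $\Supp(i_z^*f) \subseteq i_z^\wedge(\ca)$, with equality failing only if two distinct points $\U_1, \U_2 \in \ca$ have $i_z^\wedge(\U_1) = i_z^\wedge(\U_2)$; this happens exactly when $\U_1 - \U_2 \perp \Trop(T) = \Span_\R(\Gamma)$. Since $\ca$ is planar, the difference set $\ca - \ca$ spans a $2$-dimensional rational subspace $L$. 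I would argue that if $\Span_\R(\Gamma)$ were orthogonal to some nonzero $\U_1 - \U_2 \in L$, then $\Span_\R(\Gamma) \cap L^\perp$ would be a nonzero rational line, and since $\Gamma \subset \Trop(V(f))$ and $\Trop(V(f))$ is, by the planarity description in this section, of the form (linear isomorphic to) $\Trop(L)\times\R$ or a genuine tropical plane, one can check directly from the explicit ray/cone description that no line in $\Trop(V(f))$ through the origin is orthogonal to $L - $ in fact $\Trop(V(f))$ contains the lineality-free part in directions spanning $L$ together with the extra $\R$ direction, and a short case analysis on the three cones rules this out. (Alternatively and more cleanly: $i_z^\wedge$ is injective on $\ca$ because $\ca$ affinely spans a plane and $\Trop(T) = \Span_\R(\Gamma)$ is a plane that is \emph{not} a translate of a hyperplane orthogonal to any edge direction of $P(f)$, which follows because $\Gamma$ is a positive-dimensional balanced fan \emph{inside} $\Trop(V(f))$, forcing $\Span_\R(\Gamma)$ to meet the support of $\Trop(V(f))$ properly.) Granting injectivity of $i_z^\wedge$ on $\ca$, we get $\Supp(i_z^*f) = i_z^\wedge(\ca)$ immediately.

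Next, $P(i_z^*f) = \Conv(i_z^\wedge(\ca))$ is at most $2$-dimensional since it sits in the character lattice of the $2$-torus $T$. It is exactly $2$-dimensional (a genuine polygon, not a segment or point) because $i_z^\wedge$ is injective and $\ca$ has at least three points: a segment or point would force a nontrivial affine relation collapsing under the injective map $i_z^\wedge$, which is impossible; more carefully, $\Conv(i_z^\wedge(\ca))$ being lower-dimensional would mean $i_z^\wedge(\ca)$ is contained in an affine line, i.e.\ all of $\ca$ lies in a plane mapping onto that line, and one combines this with the assumption $\AffSpan_\Z$ of the relevant set being full rank to derive a contradiction. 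This handles the "$P(i_z^*f)$ is a polygon" clause.

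Finally, for the tropicalization identity $\Trop(V(i_z^*f)) = (i^\vee)^{-1}(\Trop(V(f)) \cap_{\st} \Trop(T))$: by Lemma~\ref{stablepoly}, the stable intersection $\Trop(V(f)) \cap_{\st} \Trop(T)$, viewed inside $\Trop(T)$ via $i^\vee$, is the positive-codimension skeleton of the normal fan to $\Conv(i_z^\wedge(\ca))$. On the other hand, $V(i_z^*f)$ is a hypersurface in the $2$-torus $T$ with Newton polygon $\Conv(i_z^\wedge(\ca))$ (by the support computation above), so by the constant-coefficient tropicalization-of-hypersurfaces formula from Section~\ref{sec:tropical}, $\Trop(V(i_z^*f))$ is exactly that same skeleton, expressed in the coordinates of $\Trop(T) \cong \R^{2}$. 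Matching these two descriptions under the identification $i^\vee$ gives the equality. (One should note the translation by $z$ is harmless: $\Trop(z\cdot T) = \Trop(T)$ and $\Trop(V(i_z^*f)) = \Trop(V(i^*f)) $ up to a translation by $v(z)$ that we may absorb, or simply choose representatives through the origin since we are working with fans.)

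The main obstacle is the first step — proving that $i_z^\wedge$ is injective on $\ca$, equivalently that no edge direction of $P(f)$ is orthogonal to $\Span_\R(\Gamma)$. This is where the hypothesis "$\Gamma$ is planar \emph{and} lies in $\Trop(V(f))$" is doing real work: it is not automatic for an arbitrary plane $\Trop(T)$, and the cleanest route is probably to invoke the explicit description of $\Trop(V(f))$ for $|\ca| = 3,4$ given at the start of this section (three half-planes along a line, or a standard tropical plane) and observe that a positive-dimensional balanced fan $\Gamma$ sitting inside such a complex cannot span a plane orthogonal to an edge of $P(f)$, since such a plane would meet $\Trop(V(f))$ only in lower dimension near the origin. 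I would carry this out by the dichotomy already set up in Section~\ref{sec:obstructions}: either $\Span_\R(\Gamma)$ meets $\Trop(V(f))$ properly (handled by Lemma~\ref{lemma:properint}, giving injectivity directly) or non-properly (but then $\Span_\R(\Gamma) \subseteq \Trop(V(f))$, which for a plane forces $|\ca| = 2$, excluded by hypothesis), so the "planar, at least three points" hypothesis leaves only the proper case.
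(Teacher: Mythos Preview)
Your central strategy is to prove that $i_z^\wedge$ is injective on $\ca$, but this need not hold. Take $f = 1 + x_1 + x_2 + x_3$ and let $\Gamma$ be the trivalent fan with rays along $e_3$, $e_1+e_2$, and $-e_1-e_2-e_3$, each of weight $1$. This $\Gamma$ is a planar balanced fan in $\Trop(V(f))$, and it lifts: for any $z \in (\k^*)^3$ with $z_1 + z_2 \neq 0$, the curve $C = V(f) \cap z\cdot T$ is irreducible and tropicalizes to $\Gamma$. Here $\Span_\R(\Gamma)$ is the plane $U = \{(a,a,c)\}$, and the dual projection $i^\wedge$ sends both $e_1$ and $e_2$ to $(1,0)$, so it is \emph{not} injective on $\ca$. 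Your dichotomy also breaks on this example: $U \cap \Trop(V(f))$ contains the $2$-dimensional region $\{a \leq 0,\ a \leq c\}$, so the intersection is non-proper, yet $U \not\subset \Trop(V(f))$ (for instance $(1,1,-1) \notin \Trop(V(f))$). Thus the inference ``non-proper $\Rightarrow \Span_\R(\Gamma) \subseteq \Trop(V(f))$'' is false, and even in the proper case Lemma~\ref{lemma:properint} as stated requires $|\ca| = n+1$, so it does not directly cover $|\ca| = 3$.

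The paper's argument is simpler and avoids injectivity entirely. (The hypothesis should be read as ``$\Gamma$ is planar,'' consistent with the surrounding context and with the contradiction used.) It proceeds by contradiction: if $\Supp(i_z^*f) \neq i_z^\wedge(\ca)$ then some coefficient cancels, which forces $i_z^\wedge$ to be non-injective, giving $|\Supp(i_z^*f)| \leq |i_z^\wedge(\ca)| - 1 \leq |\ca| - 2 \leq 2$. But $C \subset z\cdot T \cap V(f)$ gives $(i^\vee)^{-1}(\Gamma) \subset \Trop(V(i_z^*f))$, and with at most two monomials $\Trop(V(i_z^*f))$ is empty or a classical line, forcing $\Gamma$ to be linear---contradicting planarity. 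The polygon clause is handled by the same contradiction applied when $\Conv(\Supp(i_z^*f))$ is a segment. The key input you are missing is this containment $(i^\vee)^{-1}(\Gamma) \subset \Trop(V(i_z^*f))$, which depends on the particular $z$ coming from the lift; it is what makes the conclusion $\Supp(i_z^*f) = i_z^\wedge(\ca)$ hold even when $i_z^\wedge$ collapses two points of $\ca$ but the corresponding coefficients happen not to cancel (as with $z_1 + z_2 \neq 0$ above). Your treatment of the final stable-intersection identity via Lemma~\ref{stablepoly} is fine and matches the paper; the gap is solely in the support step.
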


\begin{proof}
Suppose that  $\Supp(i_z^*f)\neq i_z^\wedge(\ca)$.  Then we must have that $i_z^\wedge$ is not injective on $\ca$.  It follows that $|\Supp(i_z^*f)|\leq |i_z^\wedge(\ca)|-1\leq |\ca|-2$.   We have $|\Supp(i_z^*f)|>0$ because $T\not\subseteq V(f)$.

If $|\Supp(i_z^*f)|=1$ then $V(i_z^*f)$ is empty.  

If $|\Supp(i_z^*f)|=2$, then $V(i_z^*f)\subset T$ is a toric curve.  Consequently $\Trop(V(i_z^*f))$ and hence $\Gamma$ are classical lines which contradicts $\Gamma$ being planar.

  If $P(i_z^*f)=\Conv(\Supp(i_z^*f))$ is not a polygon, then it must be a segment.  In that case, $\Trop(V(i_z^*f))$ would be a classical line which is again excluded.  

The final statement follows from Lemma \ref{stablepoly}.  
\end{proof}

It follows that $V(i_z^*f)$ is one-dimensional and $C$ is a component of it.
Now, either $C=V(i_z^*f)$ or $i^*(f)$ is reducible.   If $|\ca|=3$, then by Lemma \ref{l:3triangle}, $i_z^*f$ must be irreducible.   

Now, suppose $|\ca|=4$.  Since $i$ induces an inclusion on one-parameter subgroup lattices with image a saturated sublattice, by standard homological algebra, $i^\wedge$ is surjective.  Therefore, the affine span of $i^\wedge(\ca)$ is $\Z^2$.  
If $P(i_z^*f)$ is a triangle, then $i_z^*f$ is irreducible by Lemma \ref{l:4triangle}.  If $P(i_z^*f)$ is a quadrilateral then the theorem follows from Lemma \ref{l:containsaline}.

\section{Lifting Obstructions on Unimodular Surfaces} \label{sec:reduction}
In this section, we study lifting obstructions for tropical curves in
surfaces in $(\K^*)^3$.  Our strategy is to reduce to the constant coefficient case by using initial degenerations.
In fact, our necessary condition amounts to the following: if $\Gamma\subset\Trop(V(f))$ lifts then  for all $w\in\Trop(V(f))$, the fan $\Star_w(\Gamma)\subset \Trop(V(\init{w}{f}))$ lifts to a curve in $V(\init{w}{f})$.  The only caveat in passing to initial degenerations is that $\init{w}{C}$ may not be irreducible and reduced.  Therefore, we must apply the results from the previous section to the reduction of each component of $\init{w}{C}$.  Our results are summarized in Proposition \ref{prop:onKvertex} which is a rephrasing of Theorem \ref{maintheorem}.
   
\begin{definition} A Laurent polynomial $f\in\K[x_1^\pm,\dots,x_n^\pm]$ is said to be {\em unimodular} if the Newton subdivision of $P(f)$ is unimodular.
\end{definition}

Unimodularity is equivalent to the statement that for any
$w\in\Trop(V(f))$, $\init{w}{V(f)}$ is the image of a hyperplane under a monomial change of variables.  Then we may use the results from the previous section.
Note that if $w$ is in a $2$-cell, edge, or vertex of $\Trop(V(f))$,
then $\init{w}{f}$ consists of $2$, $3$, or $4$ monomials, respectively.  In each
case, $P(\init{w}{f})$ is a unimodular simplex.
We say a tropical curve $\Gamma$ is planar at a point $w$ if the linear span of $\Star_w(\Gamma)$ is a plane.

\begin{proposition} \label{prop:onKvertex} Let $\Gamma\subset\Trop(V(f))$ be a tropical curve in a unimodular hypersurface.  Suppose that $w$ is a vertex or an interior point of an edge of $\Trop(V(f))$ and $\Star_w(\Gamma)$ spans a rational plane $U$.  If $\Gamma$ lifts in $V(f)$ and $\Trop(V(f))\cap_{\st} U$ does not contain a classical segment of weight $1$ with $w$ in its interior as a local tropical cycle summand at $w$, then $\Gamma$ is locally equivalent at $w$ to an integral multiple of $\Trop(V(f))\cap_{\st} U$. 
\end{proposition}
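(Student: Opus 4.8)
The plan is to deduce this from Proposition~\ref{prop:starsin3space} by passing to the initial degeneration at $w$ and then analyzing irreducible components. Suppose $\Gamma$ lifts to a curve $C\subseteq V(f)$, so $\Trop(C)=\Gamma$. Since forming initial ideals reverses inclusions, $\init{w}{C}\subseteq\init{w}{V(f)}=V(\init{w}{f})$, and taking stars gives
\[
\Star_w(\Gamma)=\Trop(\init{w}{C})\subseteq\Trop(V(\init{w}{f}))=\Star_w(\Trop(V(f))).
\]
Because $f$ is unimodular and $w$ is a vertex or an interior point of an edge, $\Supp\init{w}{f}$ is a set of $4$ or $3$ vertices of a unimodular simplex in $\R^3$, so we are exactly in the constant-coefficient setting of Section~\ref{sec:obstructions}. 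Since stable intersection commutes with taking stars, $\Star_w\bigl(\Trop(V(f))\cap_{\st}U\bigr)=\Trop(V(\init{w}{f}))\cap_{\st}U_0=:D$, where $U_0$ is the linear plane underlying $U$. It thus suffices to show: if $D$ does not contain a classical line of weight $1$ as a tropical cycle summand, then $\Star_w(\Gamma)$ is an integral multiple of $D$.

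The only complication is that $\init{w}{C}$ need not be irreducible or reduced, so write it as a cycle $\sum_j k_j C_j$ with the $C_j$ irreducible and the $k_j>0$ integers. By linearity of tropicalization on cycles, $\Star_w(\Gamma)=\sum_j k_j\,\Trop(\operatorname{Red}(C_j))$ as a tropical cycle sum. Each $\operatorname{Red}(C_j)$ is an irreducible reduced curve in $V(\init{w}{f})$ whose tropicalization lies in the underlying set of $\Star_w(\Gamma)$, hence in the plane $U_0$; so $\Trop(\operatorname{Red}(C_j))$ either spans $U_0$ (the planar case) or spans a line inside $U_0$ (the linear case). In the planar case I would apply Proposition~\ref{prop:starsin3space} to $\operatorname{Red}(C_j)$ — which is legitimate since $\operatorname{Red}(C_j)$ is irreducible and reduced, $\Supp(\init{w}{f})$ consists of at least three vertices of a unimodular simplex, and $\Span_\R\Trop(\operatorname{Red}(C_j))=U_0$ — and conclude that either $\Trop(\operatorname{Red}(C_j))=D$ or $D$ contains a classical line of weight $1$ as a tropical cycle summand.

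In the linear case, the discussion preceding Proposition~\ref{prop:starsin3space} identifies $\operatorname{Red}(C_j)$ as a one-dimensional torus-translate; since its tropicalization lies in $U_0$, Lemma~\ref{subtori} places it inside a translate of the two-torus $T$ with $\Trop(T)=U_0$, and restricting $\init{w}{f}$ to $T$ yields a primitive binomial factor. The Newton polygon of that restriction then has a primitive segment as a Minkowski summand, and via Lemma~\ref{stablepoly} — together with the comparison between this Newton polygon and $\Conv(i^\wedge(\Supp\init{w}{f}))$, whose normal fan is $D$ — one again reads off that $D$ contains a classical line of weight $1$ as a tropical cycle summand. Hence, under the standing hypothesis that $D$ has no such summand, every component must be planar and of the first type, so $\Trop(\operatorname{Red}(C_j))=D$ for all $j$ and $\Star_w(\Gamma)=\bigl(\sum_j k_j\bigr)D$; that is, $\Gamma$ is locally equivalent at $w$ to the integral multiple $\sum_j k_j$ of $\Trop(V(f))\cap_{\st}U$.

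The step I expect to be the main obstacle is the linear-component case. The subtlety is to upgrade the information ``$\operatorname{Red}(C_j)$ is a torus-translate contained in $V(\init{w}{f})$'' into a statement about the \emph{stable} intersection $D$, rather than about the possibly larger set-theoretic intersection $\Trop(V(\init{w}{f}))\cap U_0$: one must control the support of the restriction $i^*\init{w}{f}$, where cancellation can occur because $i^\wedge$ need not be injective on $\Supp\init{w}{f}$, and relate its Newton polygon to $\Conv(i^\wedge(\Supp\init{w}{f}))$. The remaining ingredients — cycle-linearity of tropicalization, the fact that all planar components share the plane $U_0$, and the compatibility of stars with stable intersection and with initial degeneration — are routine.
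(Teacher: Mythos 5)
Your overall strategy is the same as the paper's: pass to $\init{w}{C}\subseteq V(\init{w}{f})$, decompose into irreducible components, and apply Proposition~\ref{prop:starsin3space} to the reduction of each component. In fact you are \emph{more} careful than the paper, whose entire proof is three sentences and silently applies Proposition~\ref{prop:starsin3space} to every component $C_i$ — even though that proposition requires $\Trop(\operatorname{Red}(C_i))$ to be planar. You correctly isolated the linear-component case as the point requiring a separate argument. Your treatment of the planar components is exactly the paper's and is fine.

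However, the step you flagged as ``the main obstacle'' is a genuine gap, and the claim you need there — that a linear component forces $D=\Trop(V(\init{w}{f}))\cap_{\st}U$ to contain a classical line of weight $1$ as a summand — is actually \emph{false} when $i^\wedge$ fails to be injective on $\Supp(\init{w}{f})$, i.e.\ precisely when the cancellation you worried about can occur. Concretely, take $\init{w}{f}=1+x+y+z$ and $U=\{u_1=0\}=\Span_\R(e_2,e_3)$. Then $i^\wedge$ identifies the exponents $0$ and $e_1$, $\Conv(i^\wedge(\ca))$ is the standard triangle, and $D$ is the standard tropical line in $U$, which has no classical line summand. Yet on the translate $\{x=-1\}$ the restriction of $\init{w}{f}$ degenerates to the binomial $y+z$, so $V(\init{w}{f})$ contains the one-dimensional torus translate $\{(-1,s,-s)\}$, whose tropicalization is the line $L=\R(e_2+e_3)\subset U$. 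Worse, this situation is realized by an honest lift: the irreducible curve $C=V\bigl(1+x+y+z,\ (x+1)(x-2)+ty\bigr)$ over $\K=\C((t))$ satisfies $\init{0}{C}=\{x=-1,\,y+z=0\}\cup\{x=2,\,y+z=-3\}$, so $\Star_0(\Trop(C))=L+D$ spans $U$ but is not an integral multiple of $D$, while $D$ has no weight-one classical segment through $0$ as a summand. So the linear case is not a removable technicality: the statement needs an additional hypothesis (for instance that $U$ meets $\Star_w(\Trop(V(f)))$ properly, equivalently that $i^\wedge$ is injective on $\Supp(\init{w}{f})$, which rules out cancellation and does hold in the Vigeland application, where $\Conv(i^\wedge(\ca))$ is a quadrilateral), or else alternative (2) must be weakened to refer to classical lines of $V(\init{w}{f})$ tropicalizing into $U$ rather than to summands of the stable intersection.

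In short: you reproduced the paper's argument for the planar components, you correctly located the one place where both your write-up and the paper's are incomplete, and the hole there is real — under the proposition's stated hypotheses the linear case cannot be excluded by the Newton-polygon argument, because the relevant Newton polygon is $P(i_z^*\init{w}{f})$ on a special translate, which can be strictly smaller than $\Conv(i^\wedge(\Supp\init{w}{f}))$.
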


\begin{proof}
If $\Gamma$ lifts to a curve $C$ in $V(f)$, then $\Star_w(\Gamma)$ lifts to $\init{w}{C}$ in $V(\init{w}{f})$.  Each irreducible component $C_i$ of $\init{w}{C}$ tropicalizes to a tropical cycle summand $\Gamma_i$ of $\Star_w(\Gamma)$.  Since $\Trop(V(\init{w}{f}))\cap_{\st} U$ does not contain any classical lines, the tropicalization of the reduction of each $C_i$ must be equal to $\Trop(V(\init{w}{f}))\cap_{\st} U$ by Proposition \ref{prop:starsin3space}.  
Consequently, each $\Gamma_i$ must be equal to an integral multiple of $\Trop(V(\init{w}{f}))\cap_{\st} U$.
\end{proof}

\section{Tropical Lines on Vigeland's Surface} \label{sec:vigeland}
 
\begin{figure}\begin{center}
\vspace{-.2in}
\includegraphics[height=90mm]{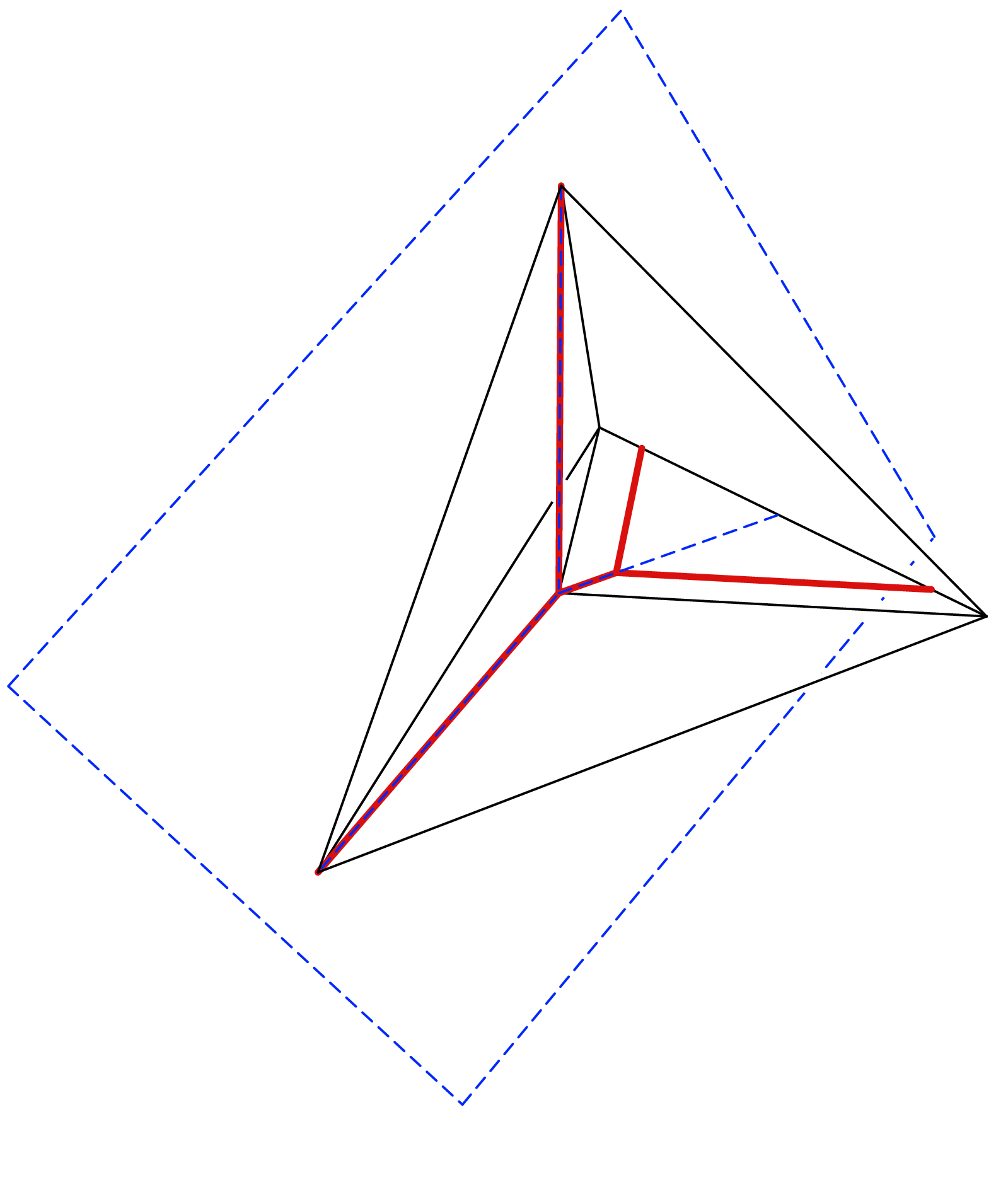}\hspace{.2in}\includegraphics[height=90mm]{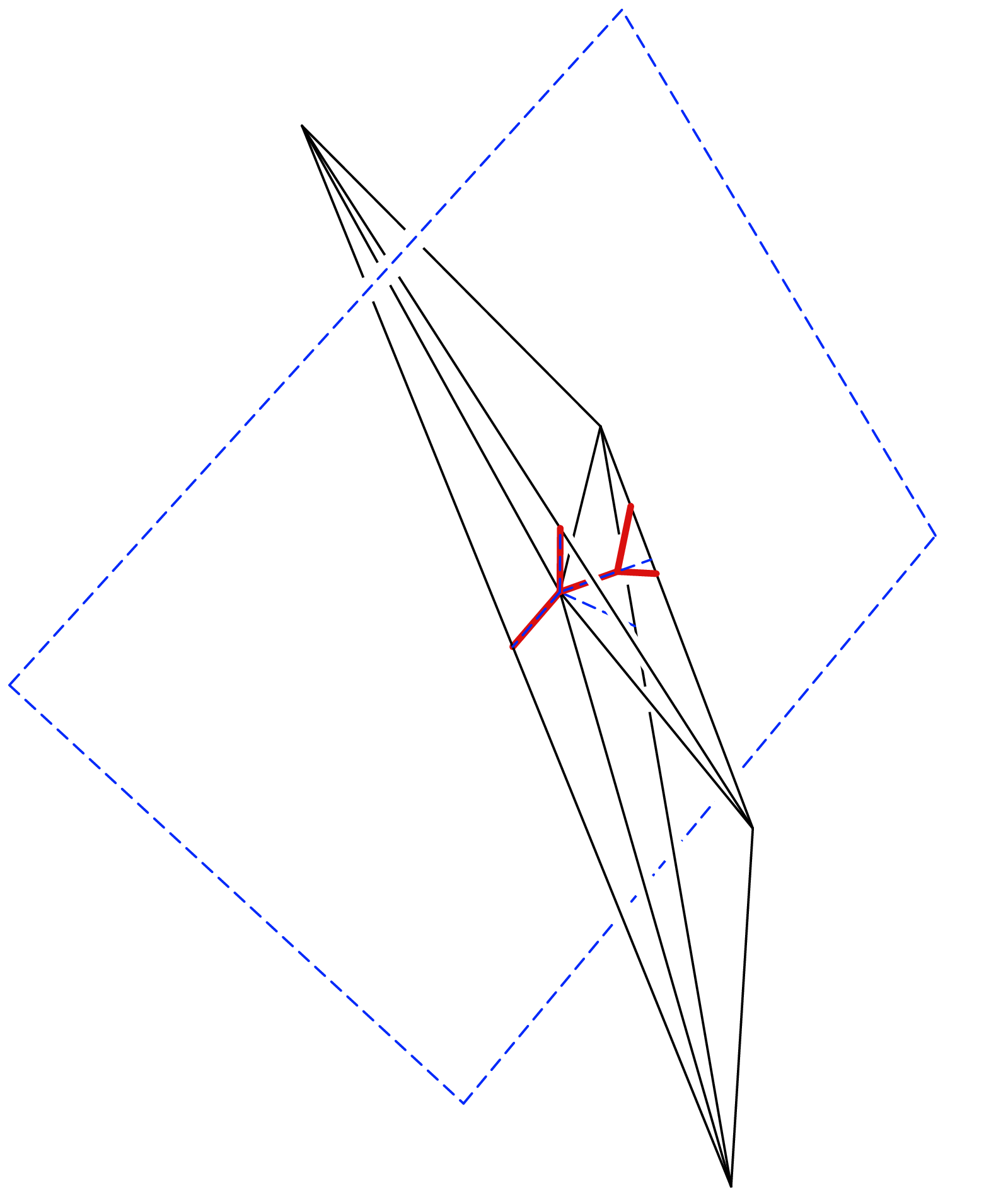}

\vspace{-.2in}
\caption{Tropical line in a tropical plane and in Vigeland's cubic surface}\label{f:lineinplane}
\end{center}
\end{figure}

In this section, we explore an example due to Vigeland \cite{Vigeland}
of some unusual tropical curves in unimodular tropical surfaces
in $\R^3$.  It is well-known
that a generic algebraic surface of degree $\delta$ contains exactly
27 lines if $\delta=3$ and 
that it contains no lines at all if $\delta>3$.  However, Vigeland
exhibited for any $\delta\geq 3$ a tropical surface in $\R^3$ of
degree $\delta$ containing an infinite family of tropical lines.  Moreover such tropical lines occur on the tropicalizations of generic surfaces.
 We show that none of the $3$-valent lines in Vigeland's family lift.  
  
 Vigeland began with a particular unimodular triangulation $S$ of the dilated
 standard tetrahedron $\delta \Delta_3$ for $\delta\geq 3$.  This
 triangulation contains the tetrahedron 
 \[\Omega_\delta=\Conv(\{(0,0,0),(0,0,1),(\delta-1,1,0),(1,0,\delta-1)\}).\]
Moreover, $S$ is coherent in that it is induced by a degree $\delta$ polynomial $p_1\in\K[x_1^\pm,x_2^\pm,x_3^\pm]$.   By possibly making a change of variables of the form $x_i\mapsto t^{w_i}x_i$, we may suppose that $0$ is the vertex of  $\Trop(V(p_1))$ dual to $\Omega_\delta$.  Then $\init{0}{p_1}$ is a polynomial of the form
\[f=A+Bx_3+Cx_1^{\delta-1}x_2+Dx_1x_3^{\delta-1}\] 
with $A,B,C,D\in\k^*$. The tropicalization $\Trop(V(f))$ is the positive codimension skeleton of the 
normal fan to $\Omega_\delta$.  It is the image of the standard tropical plane under a monomial change of variables.
For $a\in\R_{> 0}$, consider the tropical curve $L_a$ in $\R^3$ 
given by
\begin{eqnarray*}
L_a&=&\{re_3|r\geq 0\}\cup \{r(-e_1-e_2-e_3)|r \geq 0\}\cup \{r(e_1+e_2)|0\leq r\leq a\}\\
&&\cup \{a(e_1+e_2)+re_1|r\geq 0\}\cup \{a(e_1+e_2)+re_2|r\geq 0\}.
\end{eqnarray*}
Vigeland verifies that such a curve lies on $\Trop(V(f))$ and, in fact, lies on $\Trop(V(p_1))$.  We claim that $\Star_0(L_a)$ does not lift to a classical curve of $V(f)$.  This, in turn, shows that $L_a$ does not lift to a classical curve on $V(p_1)$.

By way of comparison, we note that $L_a$ does  lift to a curve on a classical plane.  The tropical curve $L_a$ is the tropicalization of the classical line parameterized by $s\mapsto (s,s+t^a,s+1)$ which lies on the plane $V(-2x+y+z-(1+t^a))$.  By taking tropicalizations, we see $L_a$ lies on the standard tropical plane.  

The star of $L_a$ at the origin spans a classical plane $U$.
The tropical line $L_a$ in the standard tropical plane and in $\Trop(V(f))$ for $\delta=3$ is shown in red in Figure \ref{f:lineinplane}.  The plane $U$ and its intersection with each surface is depicted in blue.  
The inclusion $i: U \hookrightarrow \R^3$ is induced by a map of lattices 
\[f_1\mapsto e_3, f_2\mapsto e_1+e_2\]
where $f_1,f_2$ form a basis for the lattice in $U$.  This induces a
projection $i^\wedge$ of dual lattices.  Under this projection, the
vertices of $\Omega_\delta$ are sent to 
\[\mathcal{B}=\{(0,0),(1,0),(0,\delta),(\delta-1,1)\}.\]
For the case of $\delta=3$, this is Example \ref{ex:fourterm}. By Lemma \ref{lemma:properint}, $U$ meets $\Trop(V(f))$ properly.
Consequently, the stable intersection of $U$ and $\Trop(V(f))$ is isomorphic to $\Trop(V(i^*f))$, the
positive codimension skeleton of the normal fan to
$\Conv(\mathcal{B})$.   This is depicted  for $\delta=3$ on the left in
Figure \ref{f:fourterm}.  Note that because $\Conv(\mathcal{B})$ has no pairs of parallel edges, $\Trop(V(i^*f))$  does not contain a classical
line.  However, $\Star_0(L_a)$ is a tropical line in $U$ which is
different from the stable intersection.  This shows, by Proposition
\ref{prop:onKvertex}, that $L_a$ does not lift to a curve in
any surface whose tropicalization equals $\Trop(V(p_1))$. 

\excise{The tropical line in the standard tropical plane and in $\Trop(V(f))$ for $\delta=3$ is shown in red in Figure \ref{f:lineinplane}.  The plane $U$ and its intersection with each surface is depicted in blue.  Note that the stable intersection of $U$ with the star of $\Trop(V(f))$ at the origin does not contain a classical line and that the stable intersection of $\Trop(V(f))$ with $U$ is not locally equivalent to $\Star_0(L_a)$.}

\section{Four-Term Polynomials} \label{sec:fourterms}

In this section, we classify reducible four-term Laurent polynomials
in two variables satisfying certain conditions to complete the proof of
Proposition \ref{prop:starsin3space}.  Let $\k$ be an algebraically closed field of characteristic $0$.
We will make use of Bernstein's theorem \cite{Bernstein} for $n=2$ in the following form \cite[Prop 1.2]{HS}:

\begin{theorem} \label{t:bernstein} Let $f_1,\dots,f_n\in\k[x_1^\pm,\dots,x_n^\pm]$ be Laurent polynomials.  Suppose 
that for any $w\in\Q^n\setminus\{0\}$, the initial forms $\init{w}{f_1},\dots,\init{w}{f_n}$ have no 
common zero in $(\k^*)^n$.  Then the number of common zeroes (counted with multiplicity) of
$f_1,\dots,f_n$ in $(\k^*)^n$ is 
the mixed volume of the Newton polytopes of $f_1,\dots,f_n$.
\end{theorem}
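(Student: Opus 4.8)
The plan is to derive the theorem from intersection theory on a toric compactification, with the hypothesis entering only to confine the intersection to the dense torus and make it proper. First I would choose a complete rational fan $\Sigma$ in $\R^n$ refining the (inner) normal fans of all the Newton polytopes $P_i := P(f_i)$ simultaneously; I may further subdivide so that the associated toric variety $X := X_\Sigma$ is smooth and projective, with dense torus $(\k^*)^n$. Each $P_i$ determines a nef Cartier divisor class $D_{P_i}$ on $X$, and because $P_i = \Conv(\ca_i)$ the Laurent polynomial $f_i$ is a global section of $\O_X(D_{P_i})$ whose divisor of zeros is exactly the closure $\overline{V(f_i)}$; thus $\overline{V(f_1)}, \dots, \overline{V(f_n)}$ are effective divisors in the classes $D_{P_1}, \dots, D_{P_n}$.

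Two inputs then finish the proof. First, for each cone $\tau \in \Sigma$ with $\tau \neq \{0\}$, the restriction of $\overline{V(f_i)}$ to the torus orbit $O_\tau$ is the vanishing locus of the face polynomial of $f_i$ selected by any $w$ in the relative interior of $\tau$, namely $\init{w}{f_i}$; the hypothesis says precisely that these have no common zero there, so $\bigcap_i \overline{V(f_i)}$ is disjoint from every boundary stratum and hence lies in $(\k^*)^n$. The same hypothesis forces this intersection to be $0$-dimensional, since a positive-dimensional component would have complete, hence non-affine, closure in $X$, and would therefore meet the boundary of $(\k^*)^n$. Second, one has the classical toric formula $(D_{P_1} \cdots D_{P_n})_X = \MV(P_1, \dots, P_n)$, where $\MV$ is normalized so that $\MV(P, \dots, P) = \Vol(P)$ is the lattice volume; combined with the first input, the intersection $0$-cycle $\overline{V(f_1)} \cap \cdots \cap \overline{V(f_n)}$ has degree $\MV(P_1, \dots, P_n)$ and is supported in $(\k^*)^n$. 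Finally, since $X$ is smooth along the torus, the local intersection multiplicity of the $\overline{V(f_i)}$ at a torus point coincides with the intersection multiplicity of the system $f_1 = \cdots = f_n = 0$ there, which yields the stated count.

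The step requiring the most genuine work is the toric intersection formula $(D_{P_1} \cdots D_{P_n})_X = \MV(P_1, \dots, P_n)$. The route I would take: recall the dictionary between nef toric Cartier divisors and their polytopes, establish $(D_P^n) = n!\,\Vol_{\mathrm{Eucl}}(P)$ by computing the leading term of the Ehrhart polynomial $\dim H^0(X, \O(mD_P)) = |mP \cap \Z^n|$, and then pass from the diagonal $P^n$ to general mixed products by multilinearity of the intersection pairing together with the fact that restricting $D_{P_i}$ to a boundary divisor $D_\rho$ corresponds to the facet of $P_i$ in the $\rho$-direction, which is exactly the recursion defining mixed volumes; an induction on $n$ closes it. One should also check the bookkeeping that $\divisor_0(f_i) = \overline{V(f_i)}$ with no spurious boundary components, which holds precisely because $P_i$ is taken to be the honest Newton polytope $\Conv(\ca_i)$ rather than a translate.

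If one prefers to bypass toric intersection theory, the alternative is the polyhedral-homotopy (continuation) proof: choose generic lifting functions $\lambda_i : \ca_i \to \Z$ inducing a fine mixed subdivision of $P_1 + \cdots + P_n$, deform $f_i$ by scaling each coefficient $a_{i,u}$ to $a_{i,u} t^{\lambda_i(u)}$, and let $t \to 0$. Over each mixed cell (a sum of one edge from each $P_i$) the limiting system is a binomial system whose number of torus solutions is the lattice index of the edge directions, i.e.\ the normalized volume of the cell, and these sum to $\MV(P_1, \dots, P_n)$. The hypothesis is then used to show that the incidence variety is proper over a neighborhood of the given coefficient vector, so no solution path escapes to the boundary or to infinity and the solution count is conserved along the deformation from generic coefficients — where it equals $\MV$ — down to the given $f_1, \dots, f_n$. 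I would most likely present the toric-variety version, as the properness input is cleanest there.
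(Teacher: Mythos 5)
The paper does not prove this statement at all: it is imported as a known result, namely Bernstein's theorem \cite{Bernstein} in the form stated as Proposition 1.2 of Huber--Sturmfels \cite{HS}, and is used as a black box in Section 8. So there is no ``paper proof'' to match; what you have written is a genuine proof where the paper has only a citation. Your toric argument is the standard one and is essentially correct: the fan refining all the normal fans guarantees that for $w$ in the relative interior of a cone $\tau\neq\{0\}$ the facial form $\init{w}{f_i}$ is well defined and that $\overline{V(f_i)}\cap O_\tau$ is contained in the zero locus of its image in $O_\tau$ (whose common zeros would lift to common zeros of the $\init{w}{f_i}$ in $(\k^*)^n$, since the orbit map is surjective and each facial form is a monomial times a pullback from $O_\tau$); the normalization $\divisor_0(f_i)=\overline{V(f_i)}$ with no boundary components holds exactly because $P_i=\Conv(\Supp f_i)$, as you note; properness of the intersection follows from the complete-versus-affine dichotomy; and $(D_{P_1}\cdots D_{P_n})=\MV(P_1,\dots,P_n)$ follows from $(D_P^n)=n!\Vol_{\mathrm{Eucl}}(P)$ by polarization, since both sides are symmetric multilinear forms agreeing on the diagonal. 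The only step I would flag for care is the last one: equating the intersection number with the sum over torus points of $\dim_\k\O_{X,P}/(f_1,\dots,f_n)$ uses that the $n$ divisors meet properly on a smooth (or at least Cohen--Macaulay) variety, which is why your insistence on a smooth refinement is not cosmetic. Note also that the paper only invokes the theorem for $n=2$ over an algebraically closed field of characteristic $0$, where everything you use is unproblematic; your argument in fact works over any algebraically closed field. Your alternative polyhedral-homotopy sketch is closer in spirit to the cited reference \cite{HS}, but either version would serve.
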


We have the  lemma below which will be useful for verifying the conditions of Bernstein's theorem.  For $\ca\subset\Z^n$, let $\Face_w(\ca)$ be the set of points  in $\ca$ along which $\U\cdot w$ is minimized.

\begin{lemma} \label{lem:intersectionintorus}  Let $f=gh$ be a polynomial with support set $\ca\subset\Z^2$.  Let $w\in\Q^2\setminus\{0\}$ be such that $|\Face_w(\ca)|\leq 2$.  Then
$\init{w}{g}$ and $\init{w}{h}$ do not have a common zero in $(\k^*)^2$.
\end{lemma}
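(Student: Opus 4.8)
The plan is to combine multiplicativity of initial forms with the fact that, in the constant-coefficient setting of this section, $\init{w}{f}$ is the $w$-face form of $f$, so that $\Supp(\init{w}{f})=\Face_w(\ca)$. I would first record that $\init{w}{f}=\init{w}{g}\cdot\init{w}{h}$: every monomial of $gh$ has $w$-weight at least $m_g+m_h$ (where $m_g=\min_{\mathbf{u}\in\Supp(g)}\mathbf{u}\cdot w$, and similarly $m_h$), the weight $m_g+m_h$ is attained only by products of $w$-minimal monomials of $g$ and of $h$, and these cannot cancel because $\k$ is a domain. Hence the hypothesis $|\Face_w(\ca)|\le 2$ says precisely that $\init{w}{g}\cdot\init{w}{h}$ is a monomial or a binomial.

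If $|\Face_w(\ca)|=1$ the product is a monomial, so, Newton polygon of a product being the Minkowski sum of the factors' Newton polygons, both $\init{w}{g}$ and $\init{w}{h}$ are monomials and thus have no zero in $(\k^*)^2$. In the remaining case $|\Face_w(\ca)|=2$, the supports of $\init{w}{g}$ and $\init{w}{h}$ lie on line segments orthogonal to $w$. Let $\mathbf{e}\in\Z^2$ be a primitive generator of $w^\perp\cap\Z^2$; after a monomial change of variables sending $\mathbf{e}$ to $e_1$ (a torus automorphism, hence harmless for the existence of common zeros in $(\k^*)^2$), we may write $\init{w}{g}=\x^{\alpha}\,p(x_1)$ and $\init{w}{h}=\x^{\beta}\,q(x_1)$ with $\x^\alpha,\x^\beta$ Laurent monomials and $p,q\in\k[x_1]$ satisfying $p(0),q(0)\neq 0$. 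Then $\init{w}{f}=\x^{\alpha+\beta}\,p(x_1)q(x_1)$ has at most two terms, so $p(x_1)q(x_1)=c_0+c_1x_1^m$ for some $c_0,c_1\in\k^*$ and $m=\deg(pq)$.

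The crux is then immediate: since $\k$ has characteristic $0$, the polynomial $c_0+c_1x_1^m$ has $m$ distinct roots, all nonzero, so it is squarefree; therefore $\gcd(p,q)=1$ in $\k[x_1]$. A common zero $(\xi_1,\xi_2)\in(\k^*)^2$ of $\init{w}{g}$ and $\init{w}{h}$ would force $p(\xi_1)=q(\xi_1)=0$, contradicting coprimality of $p$ and $q$. I expect no real obstacle here; the only points requiring a little care are the bookkeeping in the reduction (multiplicativity of $\init{w}{\cdot}$, the normalization pulling out monomials, and invariance under the monomial change of variables), while the decisive step — squarefreeness of a binomial over a characteristic-$0$ field — is essentially a one-line observation.
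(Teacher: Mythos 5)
Your proposal is correct and follows essentially the same route as the paper: both arguments rest on $\init{w}{f}=(\init{w}{g})(\init{w}{h})$ being a monomial or binomial and on the fact that such a polynomial has no repeated roots in $(\k^*)^2$ in characteristic $0$, so its two factors cannot share a zero. You have merely spelled out the squarefreeness of the binomial (via the change of variables and $c_0+c_1x_1^m$) that the paper states in one line.
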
 

\begin{proof}
The Laurent polynomial $\init{w}{f} = (\init{w}{g})(\init{w}{h})$ is
the sum of monomials of $f$ along $\Face_w(P(f))$, which is either a
vertex or an edge of $P(f)$.  
Then $\init{w}{f}$ is a monomial or binomial, which in either case has
no repeated roots in $(\k^*)^2$. 
Therefore, $\init{w}{g}$ and $\init{w}{h}$ have no common
zeroes in $(\k^*)^2$. 
\end{proof}

\begin{definition} A binomial $b \in\k[x_1^\pm,\dots,x_n^\pm]$ is said to be \emph{minimal} if 
\[b=cx_1^{a_1}\dots x_n^{a_n}+dx_1^{b_1}\dots x^{b_n}\]
where $\gcd(a_1-b_1,\dots,a_n-b_n)=1$.
\end{definition}

Note that $b$ is a minimal binomial if and only if the Newton polytope $P(b)$ is a segment of lattice length
one.  The tropicalization of $V(b)$, $\Trop(V(b))$ is the classical hyperplane
in $\R^n$ of weight $1$ through the origin that is orthogonal to the segment $P(b)$.  

\begin{proposition} \label{prop:binomialfactor}
Let $f\in\k[x_1^\pm,x_2^\pm]$ be a polynomial with four monomials such
that $P(f)$ is a quadrilateral.  Suppose that the $\Z$-affine span of the support of $f$ is $\Z^2$.  If $f=gh$ is a non-trivial factorization, then one of the factors is a minimal binomial.  Consequently, $\Trop(V(f))$ contains a classical line of weight $1$ as a tropical cycle summand.
\end{proposition}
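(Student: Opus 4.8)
The plan is to study a nontrivial factorization $f = gh$ through the Minkowski decomposition $P(g)+P(h) = P(f) =: Q$ that it forces. Since $Q$ is a quadrilateral and $f$ has exactly four monomials, $\Supp(f)$ is precisely the four vertices of $Q$. Two elementary consequences are used throughout. First, for every $w \in \Q^2\setminus\{0\}$ the set $\Face_w(\Supp f)$ is a vertex or an edge of $Q$, so $|\Face_w(\Supp f)|\le 2$; hence Lemma~\ref{lem:intersectionintorus} applies to $f=gh$, and by Theorem~\ref{t:bernstein} the curves $V(g)$ and $V(h)$ meet in exactly $\MV(P(g),P(h))$ points of $(\k^*)^2$, counted with multiplicity. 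Second, if $p\in(\k^*)^2$ is a singular point of $V(f)$ then $(a_\U p^\U)_{\U\in\Supp f}$ is proportional to the (essentially unique) affine dependence among the four vertices of $Q$; reading off the resulting values of $p^{\U-\V}$ for consecutive vertices $\U,\V$ and using that these differences generate $\AffSpan_\Z(\Supp f)=\Z^2$, one finds that $p$ is determined uniquely. Thus $V(f)$ has at most one singular point in the torus.

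The crux is to show that one of $P(g),P(h)$ is one-dimensional. Suppose both are two-dimensional. By the $\Z$-affine span hypothesis $P(g)$ and $P(h)$ cannot be homothetic --- if $P(h)=sP(g)+c$ with $s>0$ then $(1+s)$ times the difference lattice of $P(g)$ would equal the difference lattice of $Q$, i.e.\ $\Z^2$, which is impossible since that difference lattice is a full-rank sublattice of $\Z^2$ and $1+s>1$ --- so the Minkowski inequality for planar convex bodies is strict and gives $\MV(P(g),P(h))\ge 2$. By the first remark above, $V(g)\cap V(h)$ then contains at least two points of $(\k^*)^2$ counted with multiplicity; but every such point is singular on $V(f)$, and by the second remark there is at most one. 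Hence $V(g)\cap V(h)$ is a single point $p$ at which $V(g)$ and $V(h)$ are tangent. Ruling this out is where I expect the real work to be: one exploits that $Q$ has only four edge directions, so $P(g)$ and $P(h)$ share at least two oriented edge directions, and that along each shared direction the corresponding edge form of $f$ is a binomial equal to the product of the edge forms of $g$ and $h$; in characteristic $0$ this product structure, combined with the closing-up relations $\sum_d\ell_g(d)\hat d=\sum_d\ell_h(d)\hat d=0$ for the two polygons and with the constraint $|\Supp f|=4$, is incompatible with the forced tangency --- Bernstein's theorem, via Lemma~\ref{lem:intersectionintorus}, supplying the bookkeeping in the cases where an edge form has more than two terms. (The subcase where $P(g)$ and $P(h)$ are translates is already excluded: then $Q$ is a $2$-fold dilate $2R+c$ and $\AffSpan_\Z(\Supp f)=2\,\AffSpan_\Z(\text{vertices of }R)\ne\Z^2$.)

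Granting that one factor, say $g$, is a binomial, its Newton polytope is a segment of some lattice length $\ell\ge 1$ in a primitive direction $w$, and it remains to see $\ell=1$. Here $\Trop(V(g))$ is the classical line orthogonal to $w$ carrying weight $\ell$, a tropical cycle summand of $\Trop(V(f))$, which is the codimension-$1$ skeleton of the normal fan of $Q$; hence $Q$ has a pair of parallel $w$-edges. If $Q$ is a parallelogram then $f/g$ is a second binomial, $\AffSpan_\Z(\Supp f)$ is generated by the two scaled edge vectors of $Q$, and equality with $\Z^2$ forces both lattice lengths to be $1$. Otherwise $P(f/g)$ is two-dimensional, and a count on $\Supp(gh)=\Supp(f)$ --- the symmetric difference of the two translates of $\Supp(f/g)$ by the endpoints of $P(g)$ already has more than four elements unless $\ell w$ nearly translates $\Supp(f/g)$ into itself, which drives that support onto a line, contradicting two-dimensionality --- again yields $\ell=1$. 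So $g$ is a minimal binomial.

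Finally the corollary is immediate: a minimal binomial $b\mid f$ has $\Trop(V(b))$ equal to the weight-$1$ classical line through the origin orthogonal to the segment $P(b)$, and since tropicalization is linear on cycles, $\Trop(V(f))=\Trop(V(b))+\Trop(V(f/b))$, so $\Trop(V(f))$ contains this weight-$1$ classical line as a tropical cycle summand. The step I expect to be genuinely hard is the elimination of the single-tangency configuration in the second paragraph, and in particular the subcases there in which an edge form of $g$ or $h$ has more than two terms, so that the clean matching-edge-length argument is unavailable; everything else is a matter of organizing Minkowski theory, elementary lattice computations, and the Bernstein-theoretic input recalled above.
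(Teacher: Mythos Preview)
Your outline shares the paper's starting moves---Bernstein's theorem via Lemma~\ref{lem:intersectionintorus}, and uniqueness of a torus singular point via the affine-dependence argument---but there are two real gaps.

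First, the appeal to ``the Minkowski inequality'' does not yield $\MV(P(g),P(h))\ge 2$. Minkowski's first inequality in the plane gives only $\MV(K,L)^2 > \operatorname{Area}(K)\operatorname{Area}(L)$ for non-homothetic bodies, and since a two-dimensional lattice polygon can have area $1/2$, strictness plus integrality yields only $\MV\ge 1$. Indeed two non-homothetic unimodular triangles can have mixed area $1$; what actually rules this out is the constraint that $P(g)+P(h)$ be a quadrilateral, which your inequality never uses. The paper proves $\MV\ge 2$ by a short direct case analysis (both quadrilaterals; triangle plus quadrilateral; two triangles), in each case locating inside the Minkowski sum either two disjoint lattice parallelograms or one with an edge of lattice length at least $2$.

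Second, and more importantly, the step you flag as ``the real work''---ruling out a single tangential intersection---is precisely where the paper supplies the missing idea, and it is not the edge-form bookkeeping you sketch. After normalizing the singular point to $(1,1)$, one computes the Hessian of $f$ there and finds it equals, up to a nonzero scalar, the product of the areas of the four triangles formed by triples of points in $\Supp(f)$; since $P(f)$ is a genuine quadrilateral all four areas are nonzero, so the singularity is a node. Hence $V(g)$ and $V(h)$ meet with multiplicity exactly $1$, and Bernstein forces $\MV(P(g),P(h))\le 1$. This single Hessian computation replaces your entire tangency discussion and also streamlines the endgame: once one factor is a binomial with Newton segment of lattice length $\ell$, the bound $\MV\le 1$ immediately gives $\ell=1$, so the support-counting argument you propose for $\ell=1$ is unnecessary.
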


Our strategy to prove Proposition~\ref{prop:binomialfactor} is to show that the only possible singularity of $V(f)$ is a single node and therefore that $V(g)$ and $V(h)$ intersect in exactly one point.  By the use of 
Bernstein's theorem, we can constrain the Newton polygons of $g$ and $h$.  This, in turn, will restrict the support set of $f$.

\begin{lemma} $V(f)$ has at most one singular point in $(\k^*)^2$.
\end{lemma}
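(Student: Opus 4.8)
The plan is to reduce the statement to a short linear-algebra computation via logarithmic derivatives. First I would multiply $f$ by a monomial so that one vertex of $\ca$ is the origin; this changes neither the singular locus of $V(f)$ in $(\k^*)^2$ nor the hypotheses on $\ca$. Write $\ca=\{\U_1,\U_2,\U_3,\U_4\}$ with $\U_1=0$ and $f=\sum_{j=1}^{4}a_{\U_j}\x^{\U_j}$, each $a_{\U_j}\in\k^*$.

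The key observation is that a point $p\in(\k^*)^2$ is singular on $V(f)$ precisely when $f(p)=x_1\frac{\partial f}{\partial x_1}(p)=x_2\frac{\partial f}{\partial x_2}(p)=0$, where I may replace the ordinary partials by logarithmic ones since the coordinates of $p$ are units. Setting $c_j=a_{\U_j}p^{\U_j}\in\k^*$, these three equations say exactly that $\sum_{j=1}^{4}c_j\,(1,\U_j)=0$ in $\k^3$.

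Next I would pin down this relation. Since $P(f)=\Conv(\ca)$ is a two-dimensional quadrilateral, the $\U_j$ affinely span $\R^2$, so the four vectors $(1,\U_j)\in\Z^3$ span $\k^3$ and their linear relations form a one-dimensional subspace, spanned by some $\lambda=(\lambda_1,\dots,\lambda_4)$. Moreover no three vertices of a quadrilateral are collinear, so no three of the $(1,\U_j)$ are linearly dependent, which forces every $\lambda_j\neq 0$. Hence at a singular point $(c_1,\dots,c_4)=t\lambda$ for some $t\in\k^*$ (nonzero because the $c_j$ are); the coordinate $j=1$ gives $a_{\U_1}=c_1=t\lambda_1$, determining $t$, and then $p^{\U_j}=c_j/a_{\U_j}$ is a fixed nonzero scalar for $j=2,3,4$.

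Finally, if $p$ and $p'$ were two singular points, then $(p/p')^{\U_j}=1$ for $j=2,3,4$ (coordinatewise quotient). Because $\U_1=0$, the hypothesis $\AffSpan_\Z(\ca)=\Z^2$ says $\Span_\Z(\U_2,\U_3,\U_4)=\Z^2$, whence $(p/p')^{\U}=1$ for every $\U\in\Z^2$ and so $p=p'$. The only step that requires any thought is the claim that the essentially unique relation among the $(1,\U_j)$ has all entries nonzero, and this is forced by the quadrilateral hypothesis (no three collinear vertices); everything else is the standard correspondence between points of the torus and characters of $\Z^2$.
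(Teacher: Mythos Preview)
Your proof is correct and follows essentially the same strategy as the paper: both reduce the singularity equations to a rank computation on the vectors $(1,\U_j)\in\k^3$ and then invoke the hypothesis $\AffSpan_\Z(\ca)=\Z^2$ to force uniqueness via characters. The only cosmetic difference is that the paper normalizes a putative singular point to $(1,1)$ via the $(\k^*)^2$-action and shows the coefficients $d_a,d_b,d_c$ are then uniquely determined (the determinant being the area of the triangle on the three nonzero exponents), whereas you keep $f$ fixed and parametrize the one-dimensional relation space directly; these are dual formulations of the same linear-algebra fact.
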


\begin{proof}
Suppose $f$ is singular.  By multiplying $f$ by a monomial, we may suppose that it has the form
\[f=1+d_ax_1^{a_1}x_2^{a_2}+d_bx_1^{b_1}x_2^{b_2}+d_cx_1^{c_1}x_2^{c_2}.\]
Since $P(f)$ is a quadrilateral,
$Q=\Conv\{(a_1,a_2),(b_1,b_2),(c_1,c_2)\}$ is a non-degenerate
triangle. By applying the $(\k^*)^2$-action, we may suppose that $f$
is singular at $(1,1)$.  Therefore, \[0=f(1,1)=f_{x_1}(1,1)=f_{x_2}(1,1).\]
We rewrite these equations as a linear system in $d_a,d_b,d_c$.  By a straightforward computation, the determinant of the linear system is (up to sign) equal to the area of $Q$.  Therefore, there is a unique choice of coefficients $d_a,d_b,d_c$ that gives a curve singular at $(1,1)$.

Suppose that there is an additional singularity at a point $(\xi_1,\xi_2)$.  Therefore, $f(\xi_1^{-1}x_1,\xi_2^{-1}x_2)$ is singular at $(1,1)$ and hence is equal to $f(x_1,x_2)$.
By equating coefficients, we conclude that
$\xi_1^{a_1}\xi_2^{a_2}=\xi_1^{b_1}\xi_2^{b_2}=\xi_1^{c_1}\xi_2^{c_2}=1$.
Since the $\Z$-linear span of the vertices of $Q$ is $\Z^2$, we must have $(\xi_1,\xi_2)=(1,1)$.
\end{proof}

\begin{lemma} Any singular point of $V(f)$ in $({\k}^*)^2$ is a node.
\end{lemma}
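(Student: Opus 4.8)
The plan is to build on the previous lemma, which guarantees that $V(f)$ has at most one singular point in $(\k^*)^2$, and to show that at that point the curve has an ordinary double point (a node) rather than a worse singularity such as a cusp or tacnode. After multiplying $f$ by a monomial and applying the $(\k^*)^2$-action as in the previous proof, I would reduce to the normal form $f=1+d_a x_1^{a_1}x_2^{a_2}+d_b x_1^{b_1}x_2^{b_2}+d_c x_1^{c_1}x_2^{c_2}$ with the singular point located at $(1,1)$, where $Q=\Conv\{(a_1,a_2),(b_1,b_2),(c_1,c_2)\}$ is a non-degenerate triangle whose lattice points $\Z$-span $\Z^2$. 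The claim to establish is that the Hessian of $f$ at $(1,1)$ is nonzero, i.e. that the quadratic part of the Taylor expansion of $f$ about $(1,1)$ (in the local coordinates $x_1=1+u$, $x_2=1+v$) is a nondegenerate quadratic form in $u,v$.

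The key computation is to write out the $2\times 2$ Hessian matrix $H$ of $f$ at $(1,1)$ in terms of the coefficients $d_a,d_b,d_c$ and the exponent vectors. Since $x_1^{p}x_2^{q}$ at $x_1=1+u,x_2=1+v$ has second-order part involving $\binom{p}{2}u^2 + pq\,uv + \binom{q}{2}v^2$ plus lower-degree contributions coming from the first derivatives, and since we already know from the singularity equations $f=f_{x_1}=f_{x_2}=0$ at $(1,1)$ that the linear part vanishes, the determinant $\det H$ becomes a polynomial expression in the $d$'s and the exponents. I would then invoke the equations determining $d_a,d_b,d_c$ uniquely (from the singularity condition, whose coefficient matrix has determinant equal up to sign to the area of $Q$, hence nonzero) to substitute in the explicit values of the $d$'s, and show that the resulting expression for $\det H$ is, up to a nonzero monomial/area factor, again proportional to a power of the area of $Q$ — in particular nonzero. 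An equivalent and perhaps cleaner route: observe that a non-node singularity would force the Hessian to be degenerate, which is one more polynomial condition on $(d_a,d_b,d_c)$ beyond the three already imposed; combined with the fact that the three conditions already pin down $(d_a,d_b,d_c)$ uniquely, one checks that this particular solution does not satisfy the extra degeneracy condition, using non-degeneracy of the triangle $Q$ and the $\Z$-spanning hypothesis.

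I expect the main obstacle to be the bookkeeping in the Hessian determinant: one must be careful that the contributions of the first derivatives of the monomials (which are individually nonzero even though their weighted sum vanishes) are correctly incorporated into the second-order Taylor coefficients, so that $\det H$ is expressed purely in terms of data that the previous lemma already controls. The conceptual content is light — it is essentially linear algebra plus the observation that ``being a worse-than-nodal singularity'' is a codimension-one condition that the unique candidate coefficient vector generically fails — but writing the determinant cleanly enough to see the area of $Q$ (or a nonzero multiple thereof) emerge requires care. Once $\det H\neq 0$ is established, the singular point is by definition a node, completing the proof.
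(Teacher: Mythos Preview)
Your approach is essentially the paper's: normalize so the singular point sits at $(1,1)$, then show the Hessian determinant there is nonzero. The paper carries out exactly this computation and obtains a sharper formula than you anticipate: after using the singularity equations to eliminate the linear terms, the Hessian determinant at $(1,1)$ equals, up to a nonzero constant, the product of the signed areas of the \emph{four} triangles formed by triples among the four support points $\{(0,0),(a_1,a_2),(b_1,b_2),(c_1,c_2)\}$, not a power of the area of $Q$ alone. Since $P(f)$ is assumed to be a genuine quadrilateral, no three of these four points are collinear, so each of the four areas is nonzero and the Hessian is nondegenerate. Note that the $\Z$-spanning hypothesis plays no role at this step; it was used only in the previous lemma.

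Your alternative ``codimension'' route is not a valid shortcut. Knowing that Hessian degeneracy is one additional polynomial condition on $(d_a,d_b,d_c)$, while the singularity equations already determine these coefficients uniquely, does not by itself exclude the possibility that this unique solution happens to lie on the degeneracy locus. Verifying that it does not reduces precisely to the determinant computation you describe first, so the alternative collapses back into the main argument.
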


\begin{proof}
Without loss of generality, we again suppose the singular point is at $(1,1)$.
By a computation, the Hessian at $(1,1)$ is the product of the areas
of the four triangles formed by triples in $i^\wedge(\ca)$ up to a
non-zero constant.   Since this is non-zero, near $(1,1)$, $V(f)$ is analytically
isomorphic to the zero-locus of a quadratic form, and hence has a node
at that point.
\end{proof}

\begin{lemma} \label{lem:intersectionatmost1} $V(g)$ and $V(h)$ have intersection number at most $1$ in $(\k^*)^2$.
\end{lemma}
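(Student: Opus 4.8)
The plan is to show that $V(g)$ and $V(h)$ can only meet at a singular point of $V(f)$, that there is at most one such point, and that the meeting there is transverse of multiplicity one.

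First I would observe that if $p\in V(g)\cap V(h)$ then, using $\nabla f=g\,\nabla h+h\,\nabla g$ together with $g(p)=h(p)=0$, one gets $f(p)=0$ and $\nabla f(p)=0$, so $p$ is a singular point of $V(f)$ in $(\k^*)^2$. In particular, if $V(g)$ and $V(h)$ had a common irreducible component then $V(f)$ would be singular along an entire curve, contradicting the preceding lemma that $V(f)$ has at most one singular point in $(\k^*)^2$; hence $g$ and $h$ are coprime, the intersection number $(V(g)\cdot V(h))$ in $(\k^*)^2$ is a well-defined finite sum of local intersection multiplicities, and every point contributing to it is a node of $V(f)$ by the two preceding lemmas. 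Consequently $V(g)\cap V(h)$ is either empty (in which case the intersection number is $0$) or consists of the single node $p_0$ of $V(f)$.

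It then remains to bound the local intersection multiplicity of $V(g)$ and $V(h)$ at $p_0$ when $p_0\in V(g)\cap V(h)$. Here I would pass to the completed local ring $\widehat{\mathcal{O}}_{p_0}\cong\k[[u,v]]$, which is a unique factorization domain. Since $p_0$ is a node, after an analytic change of coordinates we may write $f=\varepsilon\,uv$, where $\varepsilon$ is a unit and $u,v$ are the two coordinate functions, which are prime in $\k[[u,v]]$. Both $g$ and $h$ lie in the maximal ideal, since they vanish at $p_0$, and $gh=\varepsilon\,uv$; unique factorization forces $\{g,h\}=\{\varepsilon_1 u,\varepsilon_2 v\}$ for units $\varepsilon_1,\varepsilon_2$, up to relabeling. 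Thus the local branches of $V(g)$ and $V(h)$ at $p_0$ are the two coordinate axes, so the local intersection multiplicity is $\dim_\k\k[[u,v]]/(u,v)=1$. Summing over the at most one point of $V(g)\cap V(h)$ gives $(V(g)\cdot V(h))\le 1$.

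The step that needs the most care is the identification of the global factorization $f=gh$ with the branch decomposition of the node at $p_0$: this is precisely where one uses that a node has exactly two analytic branches, that its completed local ring is a power series ring and hence a unique factorization domain, and that $g$ and $h$ are both non-units there. This rules out the otherwise conceivable situation in which $V(g)$ and $V(h)$ both pass through the same branch of the node, which would make $V(f)$ non-reduced at $p_0$ and contradict its being a node; everything else in the argument is routine.
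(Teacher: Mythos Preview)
Your argument is correct and follows essentially the same approach as the paper: intersection points of $V(g)$ and $V(h)$ are singular points of $V(f)$, the preceding lemmas guarantee at most one such point and that it is a node, and a node forces the two factors to meet transversally with local multiplicity $1$. The paper states this in two sentences, leaving the local analysis implicit; you have supplied the details (coprimality, the UFD computation in the completed local ring) that justify the transversality claim.
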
 

\begin{proof}  Any point of intersection of $V(g)$ and $V(h)$ gives a singular point of $V(f)$.  There
  is only one singular point and it is a node, which corresponds to two curves meeting transversally.  In this case, the intersection multiplicity is $1$.
\end{proof}

Now there are three different cases: $P(g)$ and $P(h)$ could both be
segments, $P(g)$ could be a polygon and $P(h)$ a segment, or $P(g)$
and $P(h)$ could both be polygons. 

In the first case, $P(g)$ and $P(h)$ cannot be parallel and so $P(f)$
is a parallelogram.   By Bernstein's theorem, for $V(g)$ and $V(h)$ to have intersection number $1$, $P(g)$ and $P(h)$ must be minimal binomials whose primitive integer vectors span $\Z^2$.
In the second case, because the mixed volume of $P(g)$ and $P(h)$ is $1$, $h$ must be a minimal binomial.
We now eliminate the third case to complete the proof of Proposition \ref{prop:binomialfactor}.

\begin{lemma} $P(g)$ and $P(h)$ cannot both be polygons.
\end{lemma}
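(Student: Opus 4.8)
The plan is to assume, for contradiction, that $P(g)$ and $P(h)$ are both two-dimensional, to use Bernstein's theorem to pin down their Newton polygons, and then to reach a contradiction by inspecting the Minkowski sum $P(f) = P(g) + P(h)$. First note that since $P(f) = \Conv(\ca)$ is a quadrilateral and $|\ca| = 4$, every point of $\ca$ is a vertex of $P(f)$; hence for every $w \in \Q^2 \setminus \{0\}$ the set $\Face_w(\ca)$ is a single vertex or a single edge of $P(f)$, so $|\Face_w(\ca)| \le 2$. By Lemma~\ref{lem:intersectionintorus}, $\init{w}{g}$ and $\init{w}{h}$ therefore have no common zero in $(\k^*)^2$ for any such $w$, so Theorem~\ref{t:bernstein} applies and the number of common zeros of $g$ and $h$ in $(\k^*)^2$, counted with multiplicity, equals the mixed volume $\MV(P(g), P(h))$. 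Since $P(g)$ and $P(h)$ are full-dimensional, this mixed volume is a positive integer; combined with Lemma~\ref{lem:intersectionatmost1} it equals $1$.

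Next I would apply the planar Alexandrov--Fenchel (Minkowski) inequality together with the identity $\MV(Q,Q) = \Vol(Q)$, where $\Vol$ denotes normalized volume: $\MV(P(g),P(h))^2 \ge \MV(P(g),P(g))\,\MV(P(h),P(h)) = \Vol(P(g))\,\Vol(P(h))$. From $\MV(P(g),P(h)) = 1$ this gives $\Vol(P(g))\,\Vol(P(h)) \le 1$. Every two-dimensional lattice polygon has normalized volume at least $1$, so $\Vol(P(g)) = \Vol(P(h)) = 1$, which forces (by Pick's theorem, a lattice polygon of normalized volume $1$ being a unimodular simplex) that $P(g)$ and $P(h)$ are both unimodular triangles; in particular every edge of each has lattice length $1$.

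To finish, I would use $P(f) = P(g) + P(h)$. Write $D_g$ and $D_h$ for the sets of primitive edge vectors of $P(g)$ and $P(h)$, each of cardinality $3$. Because the edge vectors of a convex polygon sum to zero and those of a unimodular triangle are primitive, the three vectors in $D_g$ sum to zero, and likewise for $D_h$. Hence if $D_g$ and $D_h$ had two vectors in common, the remaining vector of each would equal minus the sum of those two, giving $D_g = D_h$; consequently $|D_g \cap D_h| \in \{0,1,3\}$ and is never $2$. Since the number of edges of the Minkowski sum of two convex polygons in the plane equals the number of distinct edge directions of the two summands, namely $|D_g \cup D_h| = 6 - |D_g \cap D_h| \in \{3,5,6\}$, the polygon $P(f) = P(g)+P(h)$ cannot be a quadrilateral, a contradiction.

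The only delicate point, and the step I would be most careful about, is the reduction to Bernstein's theorem: one must check that the non-degeneracy hypothesis of Theorem~\ref{t:bernstein} genuinely holds for all $w \neq 0$ (this is precisely where the hypothesis that $P(f)$ is a quadrilateral enters, via $|\Face_w(\ca)| \le 2$), and one must keep the normalization of mixed volume consistent so that the Alexandrov--Fenchel inequality yields $\Vol(P(g))\,\Vol(P(h)) \le 1$ with the sharp constant, which is what collapses both polygons to unimodular triangles.
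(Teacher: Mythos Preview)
Your argument is correct, and it takes a genuinely different route from the paper's. The paper proceeds by case analysis on the shapes of $P(g)$ and $P(h)$ (two quadrilaterals; a triangle and a quadrilateral; two triangles), and in each case exhibits enough disjoint lattice parallelograms inside the Minkowski sum $P(g)+P(h)$ to force the mixed area to be at least~$2$, contradicting Lemma~\ref{lem:intersectionatmost1}. Your approach instead uses the planar Minkowski (Alexandrov--Fenchel) inequality to collapse both polygons to unimodular triangles in one stroke, and then finishes with a short parity-style count on edge directions: three primitive edge vectors summing to zero determine one another, so two unimodular triangles share $0$, $1$, or $3$ edge directions, never the $2$ needed to make $P(g)+P(h)$ a quadrilateral. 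The paper's argument is more elementary and entirely self-contained, while yours is shorter and leans on a classical convexity inequality; both ultimately exploit Bernstein's theorem together with the bound from Lemma~\ref{lem:intersectionatmost1}, but yours pinpoints the obstruction more sharply by identifying the extremal case (two unimodular triangles) and ruling it out combinatorially.
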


\begin{proof}
We will show that if both $P(g)$ and $P(h)$ are polygons then their
mixed area is at least $2$. Because Lemmas~\ref{lem:intersectionintorus} applies for all $w\in\Q^2\setminus\{0\}$, by Bernstein's theorem, $V(g)$ and $V(h)$ have at least $2$ common zeroes (counted with multiplicity), contradicting Lemma~\ref{lem:intersectionatmost1}.  

First suppose both $P(g)$ and $P(h)$ are quadrilaterals. Since
their Minkowski sum is also a quadrilateral, the two polygons have all
of their edge directions in common. Choose
$w \in \Q^2\setminus\{0\}$ such that $\Face_w P(g)$ is a vertex $v_g$ and
$\Face_{-w} P(h)$ is a vertex $v_h$. Draw the Minkowski sum $P(g)+P(h)$
by placing the two polygons so that $v_g$ and $v_h$
coincide. Then $P(g)+P(h)$ is the union of $P(g)$, $P(h)$, and two
lattice parallelograms, all with disjoint interiors. The area of each
parallelogram is a positive integer, so the mixed area of $P(g)$ and
$P(h)$ is at least two.

Next suppose one polygon, say $P(g)$, is a triangle and $P(h)$ is a
quadrilateral. Then all three edge directions of $P(g)$ are also edge
directions of $P(h)$. In order for these three edges of $P(h)$ not to
close up (as they do in $P(g)$), at least one edge of $P(h)$ must be
of different length than its counterpart in $P(g)$. Since both $P(g)$ and $P(h)$
are lattice polygons, either $P(g)$ or $P(h)$ has an edge $e$ of lattice length $k\geq 2$.  If $P(h)$ contains the edge $e$, draw the Minkowski sum by fixing $P(h)$ and then
placing $P(g)$ at either end of $e$. Then $P(g) + P(h)$ contains the
union of $P(g)$, $P(h)$, and a lattice parallelogram with $e$ as one
of its edges, all with disjoint interiors. So the mixed area is at
least $k$.  The case that $P(g)$ contains $e$ is identical.

Finally, suppose $P(g)$ and $P(h)$ are both triangles. Then they share
two edge directions. But since the third edge direction is \emph{not}
shared, one of the common edges must be longer in one polygon, say
$P(h)$, then in the other. So again $P(h)$ has an edge of lattice
length $k \geq 2$, and we can proceed as in the previous case.   
\end{proof}

\section{Lifting Curves in Tropical Planes in High Dimensional Space} \label{sec:maclagan}
Using an idea of Gibney-Maclagan
\cite[Sec. 4.1]{GM} who consider the constant coefficient case, we give another example showing  that the 
relative lifting problem is not combinatorial.  In other words, we
will exhibit two surfaces $S^\circ,{S'}^\circ\subset(\K^*)^n$ with
$\Trop(S^\circ)=\Trop({S'}^\circ)$ and a tropical curve
$\Gamma\subset\Trop(S^\circ)$ such that $\Gamma$ lifts in $S^\circ$
but not in ${S'}^\circ$. 

Let $d$ be a positive integer.  Let $x,y$ be the coordinates on $(\K^*)^2$.
Pick a set $\mathring{T}$ of $\binom{d+2}{2}-1$ points in tropical
general position in $\R^2$ \cite[Def 4.7]{Mi03}.  Let $\mathring{P}$
be a set of points  in $(\K^*)^2$ that lifts $\mathring{T}$.  There is
a unique curve $C$ of degree $d$ in $\P^2$ passing through
$\mathring{P}$.  Pick a generic point $p\in C\cap (\K^*)^2$ such that
$v(p)$ does not share an $x$- or $y$-coordinate with any of the points in $\mathring{T}$.
Let $P=\mathring{P}\cup\{p\}$.
Let $T=\mathring{T}\cup \{v(p)\}$.  We may suppose by general position considerations that no line of the form $x=x_k$ or $y=y_k$ through a point of $P$ is tangent to $C$.

Let $P'$ be a set of $N=\binom{d+2}{2}$ points of $(\K^*)^2$ in general position lifting $T$.  It is possible to choose such a set since the set of such lifts is Zariski dense in $(\P^2)^N$ by the arguments of \cite{PFibers}.
Note that there is no curve of degree $d$ passing through $P'$.
Enumerate the points of $P$ and $P'$ as $\{(x_1,y_1),\dots,(x_N,y_N)\}$ and $\{(x'_1,y'_1),\dots,(x'_N,y'_N)\}$, 
respectively.  Define rational functions $l_1,\dots,l_N,l'_1,\dots,l'_N$ by 
\[l_k=\frac{y-y_k}{x-x_k},\ l'_k=\frac{y-y'_k}{x-x'_k}.\]
Let $S,S'\subset(\P^2\times (\P^1)^N)$ be the closures of the graphs of $j=(l_1,\dots,l_N)$ and $j'=(l'_1,\dots,l'_N)$.    Note that $S$ and $S'$ are the blow-ups of $\P^2$ at $P$ and $P'$ since blowing up those points resolves the indeterminacy of the rational functions. 
Let $S^\circ=S\cap \left( (\K^*)^2\times(\K^*)^N\right)$,
${S'}^\circ=S'\cap\left( (\K^*)^{2}\times(\K^*)^N\right)$.

\begin{lemma} $\Trop(S^\circ)=\Trop({S'}^\circ)$
\end{lemma}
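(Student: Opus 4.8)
The goal is to show that $\Trop(S^\circ)$ and $\Trop({S'}^\circ)$ coincide as weighted polyhedral complexes in $\R^{2+N}$. The plan is to exploit the fact that both $S^\circ$ and ${S'}^\circ$ are graphs of explicit monomial-type rational maps, so that each tropicalization is the graph of a piecewise-linear map determined entirely by the data of the valuations $v(x_k),v(y_k)$, which by construction equal $v(x'_k),v(y'_k)$. First I would note that the projection $\pi:(\K^*)^2\times(\K^*)^N\to(\K^*)^2$ restricts to an isomorphism from $S^\circ$ (respectively ${S'}^\circ$) onto its image, an open dense subset of $(\K^*)^2$, since $S^\circ$ is the graph of $j=(l_1,\dots,l_N)$. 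Hence $\Trop(S^\circ)$ is the closure of the image of $(\K^*)^2$ under $(x,y)\mapsto\bigl(v(x),v(y),v(l_1(x,y)),\dots,v(l_N(x,y))\bigr)$, and likewise for ${S'}^\circ$.

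The key computation is to identify the tropicalization of each coordinate function $l_k=(y-y_k)/(x-x_k)$. By Kapranov's theorem the tropical hypersurface of $y-y_k$ is the corner locus of $\min(\mathsf{Y},v(y_k))$ (writing $\mathsf{Y}$ for the coordinate on $\R^2$ corresponding to $v(y)$), and similarly for $x-x_k$; away from a codimension-one locus the valuation $v(l_k)$ equals $\min(\mathsf{Y},v(y_k))-\min(\mathsf{X},v(x_k))$. So the key step is to show that on a dense open subset of $\R^2$ the map whose graph is (the closure of) $\Trop(S^\circ)$ is
\[
(\mathsf{X},\mathsf{Y})\ \longmapsto\ \Bigl(\mathsf{X},\ \mathsf{Y},\ \bigl(\min(\mathsf{Y},v(y_k))-\min(\mathsf{X},v(x_k))\bigr)_{k=1}^N\Bigr),
\]
and that this PL map, being continuous and piecewise linear, has a graph whose closure is the whole tropicalization. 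The analogous formula for ${S'}^\circ$ uses $v(x'_k),v(y'_k)$; but by the construction of $P'$ as a lift of $T=\mathring T\cup\{v(p)\}$ we have $v(x'_k)=v(x_k)$ and $v(y'_k)=v(y_k)$ for every $k$ (after matching the enumerations), so the two PL maps are literally the same function of $(\mathsf{X},\mathsf{Y})$. Therefore their graphs, and hence their closures, agree as sets; and the weights agree as well, since each top-dimensional cell of the graph maps isomorphically (via $\pi$) onto a cell of $\Trop((\K^*)^2)=\R^2$ with weight $1$, the multiplicity being inherited from the degree-one projection in both cases.

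The point I would be most careful about is the passage from ``the image of $(\K^*)^2$ under the valuation of $j$'' to ``the closure equals the graph of the PL map on all of $\R^2$'': one must check that no extra cells appear over the locus where some $x-x_k$ or $y-y_k$ has positive-dimensional tropical vanishing (i.e. where $\mathsf X=v(x_k)$ or $\mathsf Y=v(y_k)$), and that the general-position hypotheses on $T$ (no two points sharing an $x$- or $y$-coordinate, and $v(p)$ avoiding those coordinates) guarantee the combinatorial types of the two PL maps genuinely match. This is where I expect the main obstacle to lie: it is really a statement that $\Trop(S^\circ)$ depends only on the \emph{tropical} data $T$ together with the combinatorial incidence pattern of the $x$- and $y$-coordinates, which is identical for $P$ and $P'$ by construction; the generic choices were made precisely so that this pattern is the simplest possible one. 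Once that is in place, the equality $\Trop(S^\circ)=\Trop({S'}^\circ)$ is immediate, and the interest of the example lies entirely in the subsequent (separate) claim that $\Gamma$ lifts in $S^\circ$ but not in ${S'}^\circ$.
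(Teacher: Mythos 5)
There is a genuine gap, and it sits exactly at the point you flagged but resolves in the opposite way from what you expect. Your central claim is that $\Trop(S^\circ)$ is the closure of the graph of the piecewise-linear map $(\mathsf{X},\mathsf{Y})\mapsto\bigl(\min(\mathsf{Y},v(y_k))-\min(\mathsf{X},v(x_k))\bigr)_k$. That is false: extra two-dimensional cells \emph{do} appear over the loci $\mathsf{X}=v(x_k)$ and $\mathsf{Y}=v(y_k)$. If $v(x)=v(x_l)$, then $v(x-x_l)$ is not determined by $v(x)$; it can be any value $\geq v(x_l)$, so $v(l_l)=v(y-y_l)-v(x-x_l)$ sweeps out a ray. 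Together with the free coordinate $v(y)$ this produces a full two-dimensional ``vertical'' cell of $\Trop(S^\circ)$ lying over a line in $\R^2$ (and similar cells over $\mathsf{Y}=v(y_l)$ and over the points $v(x_k,y_k)$ themselves). The closure of the graph of your PL map only adds lower-dimensional boundary strata over these walls, so it is strictly smaller than the tropicalization. Your weight argument also breaks on these cells, since they do not project with degree one onto $\R^2$.

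The conclusion is still reachable, but you must either (a) explicitly describe the vertical cells and show that they, too, are determined by the valuations $v(x_k),v(y_k)$ alone, and that every point of each such candidate cell is genuinely attained (a surjectivity statement that does not follow from continuity of a PL map), or (b) follow the paper's route: write $S^\circ$ inside $\bigcap_k V(f_k)$ with $f_k=(x-x_k)z_k-(y-y_k)$, observe by Kapranov's theorem that each $\Trop(V(f_k))$ depends only on $v(x_k)$ and $v(y_k)$, set $J=\bigcap_k\Trop(V(f_k))$, and prove $J=\Trop(S^\circ)$. The inclusion $\Trop(S^\circ)\subseteq J$ is trivial; the reverse inclusion is the real content and is established by classifying the top-dimensional cells of $J$ (including the vertical ones, of which at most two can contain a given point in their relative interiors) and applying the transverse intersection lemma to each. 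Your instinct that the equality should reduce to ``the tropical data $T$ determines everything'' is correct, but the mechanism that makes the vertical cells match is the hypersurface-intersection argument, not the graph-of-a-PL-map description.
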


\begin{proof}
Let $z_1,\dots,z_N$ be the coordinates on $(\K^*)^N$.  For $k\in\{1,\dots,N\}$, let $f_k$ be the polynomial
\[f_k=(x-x_k)z_k-(y-y_k)\]
in variables $x,y,z_1,\dots,z_N$.
This defines a surface $V(f_k)\subset (\K^*)^2\times(\K^*)^N$.  The image of $V(f_k)$ under projection to the copy of $(\K^*)^3$ given by coordinates $x,y,z_k$ is the graph of $l_k$.  Let $J$ be the intersection,
\[J=\bigcap_{k=1}^N \Trop(V(f_k)).\]
We will show $\Trop(S^\circ)=J$.
An identical argument will apply to $\Trop({S'}^\circ)$.
Now, by Kapranov's theorem $\Trop(V(f_k))$ depends only on  $v(x_k)$ and $v(y_k)$.
Since $v(x_k)=v(x'_k)$ and $v(y_k)=v(y'_k)$, we can conclude $\Trop(S^\circ)=\Trop({S'}^\circ)$.

Clearly $\Trop(S^\circ)\subseteq J$.
We now show $J\subseteq\Trop(S^\circ)$.  The Newton polytope of
$f_k$ is the unimodular tetrahedron 
\[P(f_k) = \Conv\{(0,0,0),(0,1,0),(0,0,1),(1,0,1)\} \]
in the space $\R^3$ with coordinates $(x,y,z_k)$. Note that $\Trop(V(f_k))$ has $6$ top-dimensional cells.  Four of them are cut out by 
one of the following equalities:
\begin{enumerate}
\item $v(z_k)=v(y)-v(x)$
\item $v(z_k)=v(y)-v(x_k)$
\item $v(z_k)=v(y_k)-v(x)$
\item $v(z_k)=v(y_k)-v(x_k)$
\end{enumerate}
together with two inequalities.
The other two are cut out by $v(x)=v(x_k)$ or $v(y)=v(y_k)$ together with some inequalities involving $v(z_k)$.    We call the last two cells {\em vertical} as they have lower-dimensional image under the projection $\R^2\times\R^N\rightarrow\R^2$.  
All points in any higher-codimensional cell must satisfy either $v(x)=v(x_k)$ or $v(y)=v(y_k)$ and will be vertical.
A point in $\R^2\times\R^n$ is in the relative interior of at most two vertical cells. 

By taking the common refinement of the fans $\Trop(V(f_k))$ for
all $k$, we induce a polyhedral structure on their intersection, $J$.  We claim that the
top-dimensional cells of $J$ are two-dimensional.  We know that they are at least two-dimensional.  Let $q$ be a point
of $J$ lying in the relative interior of a cell $F$.  If $q$ lies in
no vertical cell, then  $v(x)$ and $v(y)$ determine $v(z_k)$ for all
$k$.  In that case, $F$ is at most two-dimensional.  If $q$ lies in a
vertical cell in $\Trop(V(f_l))$ for exactly one $l$ then either $v(x)=v(x_l)$ or $v(y)=v(y_l)$.
Therefore, $v(z_l)$ and one of $\{v(x),v(y)\}$ can vary.  These variables determine $v(z_k)$ for $k\neq l$.  Then $F$ is again two-dimensional.  The case where $q$ is in vertical cells of $\Trop(V(f_k))$ and $\Trop(V(f_l))$ imposing $v(x)=v(x_k)$ and $v(y)=v(y_l)$ is similar.

By this argument, any two-dimensional cell $F$ of $J$ lies in
the interior of top-dimensional cells of $\Trop(V(f_k))$ for all $k$.
If $F$ lies in no vertical cell, then the affine spans of the cells of
$\Trop(V(f_k))$ containing it are of the form $v(z_k)=h_k(v(x),v(y))$
where $h_k$ is a linear (possibly constant) function.  
These affine spans intersect transversely.  Therefore,
$F\subset\Trop(S^\circ)$ by the transverse intersection lemma.
If $F$ lies in one vertical cell, 
then for some $1 \leq l \leq N$, the affine spans of the cells
containing it are cut out by equations of the form
$v(z_k)=h_k(v(x),v(y))$ for all $k \neq l$ and $v(x)=v(x_l)$ or
$v(y)=v(y_l)$.  These affine spans also intersect transversely, so
$F\subset\Trop(S^\circ)$.  The same argument applies when $F$ is contained in two vertical cells and shows $F\subset\Trop(S^\circ)$.
  Since every two-dimensional cell of $J$ is
contained in $\Trop(S^\circ)$ and since $\Trop(S^\circ)$ is closed and purely two-dimensional, $J=\Trop(S^\circ)$.
\end{proof}

Let $\Gamma=\Trop(j(C))\subset\Trop(S^\circ)=\Trop({S'}^\circ)$.  We will show that $\Gamma$ does not lift to a curve in ${S'}^\circ$.
Let $p_k:\P^2\times (\P^1)^N\rightarrow \P^1$ be the projection onto the $k$th factor in $(\P^1)^N$.  

\begin{lemma} For all $k$, $\deg(p_k:C\rightarrow \P^1)=d-1$.
\end{lemma}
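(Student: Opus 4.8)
The plan is to reduce the statement to a B\'ezout count by recognizing the map $p_k$ restricted to $j(C)$ as the linear projection of the curve $C$ from the point $q_k:=(x_k,y_k)\in\P^2$. First I would recall that the rational map $j=(l_1,\dots,l_N)\colon\P^2\to(\P^1)^N$ is the birational inverse of the blow-down $S\cong\mathrm{Bl}_P\P^2\to\P^2$, hence an isomorphism over $\P^2\setminus P$; since $C$ is a curve it is not contained in the finite set $P$, so $j$ is injective on the dense open set $C\setminus P$ and $j|_C\colon C\to j(C)$ is birational. As $p_k\circ j=l_k$ on $\P^2\setminus P$, this gives
\[\deg\bigl(p_k\colon j(C)\to\P^1\bigr)=\deg\bigl(l_k|_C\colon C\to\P^1\bigr),\]
and since $l_k=\tfrac{y-y_k}{x-x_k}$ is precisely the projection $\pi_{q_k}\colon\P^2\to\P^1$ away from $q_k$ (it records the member of the pencil of lines through $q_k$ on which a point lies, with $\{y=y_k\}$ and $\{x=x_k\}$ as two such members), it remains to compute $\deg(\pi_{q_k}|_C)$.

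Next I would observe that $q_k$ lies on $C$ for every $k$: the points of $\mathring{P}$ lie on $C$ by construction and $p$ lies on $C$ by choice, so every point of $P=\mathring{P}\cup\{p\}$ does. The key local input is that $q_k$ is a \emph{smooth} point of $C$. For $q_k\in\mathring{P}$ I would argue this from general position of the lift $\mathring{P}$ of $\mathring{T}$ (in the spirit of the remark preceding the statement): a degree $d$ form that is singular at a prescribed point and vanishes at $|\mathring{P}|-1$ further points satisfies $3+(|\mathring{P}|-1)=\binom{d+2}{2}+1$ linear conditions inside the $\binom{d+2}{2}$-dimensional space of degree $d$ forms, so for a generic choice it must be zero; hence no degree $d$ curve through $\mathring{P}$ is singular at a point of $\mathring{P}$, and in particular $C$ is not. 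For $q_k=p$, the point $p$ was chosen generic on $C$, hence a smooth point.

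Finally, with $q_k$ a smooth point of $C$, a generic line $\ell$ through $q_k$ meets $C$ in $d$ points counted with multiplicity by B\'ezout, with local intersection multiplicity $1$ at $q_k$ (take $\ell$ away from the tangent line to $C$ at $q_k$); the remaining $d-1$ points are distinct and form a generic fibre of $\pi_{q_k}|_C$, so $\deg(\pi_{q_k}|_C)=d-1$. Combined with the first paragraph, this yields $\deg(p_k\colon C\to\P^1)=d-1$. I expect the only step requiring real care to be the smoothness of $C$ at the points of $P$; once that is in place, the remainder is just the standard fact that projecting a plane curve from one of its smooth points drops the degree by one.
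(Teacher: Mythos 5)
Your proof is correct, and it takes a recognizably different route from the paper's. The paper works on the blow-up $S$ with intersection theory of divisor classes: it writes the strict transform of $C$ as $dH-\sum_n E_n$, notes that a generic fibre $p_k^{-1}(q)\cap S$ has class $H-E_k$, and computes $(dH-\sum_n E_n)\cdot(H-E_k)=d-1$. You instead stay in $\P^2$, identify $l_k$ with the linear projection from $q_k=(x_k,y_k)$, and invoke B\'ezout together with the classical fact that projecting a degree-$d$ plane curve from one of its smooth points has degree $d-1$. These are dual formulations of the same count, but your version has the merit of making explicit the hypothesis that the paper leaves tacit: writing the class of the strict transform as $dH-\sum_n E_n$ already presupposes that $C$ has multiplicity one (equivalently, is smooth) at each point of $P$, and your dimension count for why a generic lift $\mathring{P}$ forces this is exactly the missing justification. (The paper's blanket ``general position considerations'' cover this, since it also needs, e.g., that no vertical or horizontal line through $P$ is tangent to $C$.) What the paper's formalism buys in exchange is reusability: the same class computation is applied verbatim in the Proposition that follows, where a hypothetical lift $C'$ on $S'$ has class $dH-\sum a_iE'_i$ with unknown $a_i$, and the intersection with $H-E'_k$ is used to pin down $a_k=1$; your plane-geometric argument would not transfer to that step, since there one does not know in advance the multiplicities of $p'(C')$ at the points of $P'$.
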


\begin{proof}
Let $E_1,\dots,E_N$ be the exceptional divisors in $S$ and $H$ be the class of a line pulled back from $\P^2$.  The class of $C$ is $dH-\sum_n E_n$.   
For $q\in \P^1$ chosen generically, $p_k^{-1}(q)\cap S$ is a curve in the class $H-E_k$.  Intersecting it with $C$, we get $d-1$.
\end{proof}

\begin{proposition} $\Gamma$ does not lift to any curve on ${S'}^\circ$.
\end{proposition}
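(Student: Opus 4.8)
The plan is to argue by contradiction: suppose $\Gamma = \Trop(j(C))$ lifts to a curve $D \subset {S'}^\circ$, and derive a contradiction from the fact that there is no degree-$d$ curve in $\P^2$ through $P'$. First I would analyze the structure of $\Gamma$ itself. Because $C$ is a degree-$d$ plane curve through the $N = \binom{d+2}{2}-1$ points $\mathring{P}$ in tropical general position, plus the extra generic point $p$, the curve $j(C) \subset (\K^*)^2 \times (\K^*)^N$ projects to $C$ in the first two coordinates, and by the preceding lemma each coordinate projection $p_k \colon C \to \P^1$ has degree $d-1$. I would translate this into information about $\Gamma$: it is a balanced weighted graph whose image under the projection $\R^2 \times \R^N \to \R^2$ is $\Trop(C)$, a tropical plane curve of degree $d$, and whose image under each $\R^2 \times \R^N \to \R$ (the $z_k$-coordinate) carries degree information $d-1$. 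The key combinatorial point is that $\Gamma$ together with the ambient $\Trop(S^\circ) = \Trop({S'}^\circ)$ records enough to force any lift to have, in the first two coordinates, a plane curve of degree $d$.

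Next I would use the explicit description of $\Trop(S^\circ)$ from the previous lemma as the intersection $J = \bigcap_k \Trop(V(f_k))$, where $f_k = (x-x_k)z_k - (y-y_k)$. A lift $D \subset {S'}^\circ = \bigcap_k V(f'_k) \cap (\K^*)^{N+2}$ (with $f'_k = (x-x'_k)z'_k - (y-y'_k)$) has the property that its image $\pi(D)$ under projection to the $(x,y)$-torus is a curve whose tropicalization is the projection of $\Gamma$, hence equals $\Trop(C)$. So $\pi(D)$ is a plane curve of tropical degree $d$. Moreover, for each $k$, the relation $f'_k = 0$ on $D$ says that the rational function $l'_k = (y-y'_k)/(x-x'_k)$ restricted to $\pi(D)$ agrees with the $z'_k$-coordinate, so $\pi(D)$ passes through — or rather, $l'_k$ is regular along — the relevant points; the degree-$(d-1)$ statement for the $p_k$ forces $\pi(D)$ to actually vanish to the right order at $(x'_k, y'_k)$. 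Concretely: the class computation $[C] = dH - \sum E_n$ in $S$ shows $\deg(p_k\colon C \to \P^1) = d-1 = (dH - \sum E_n)\cdot(H - E_k)$, and the analogous computation in $S' = \mathrm{Bl}_{P'}\P^2$ shows that a curve in ${S'}^\circ$ with the same projected tropical degree $d$ and the same $p_k$-degrees would have to lie in the class $dH - \sum E_n$ on $S'$, i.e. its image in $\P^2$ is a degree-$d$ curve through all of $P'$.

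The contradiction is then immediate: $P'$ was chosen to be a set of $N = \binom{d+2}{2}$ points of $(\K^*)^2$ in general position, and the space of degree-$d$ curves in $\P^2$ is $\binom{d+2}{2}-1$-dimensional, so no degree-$d$ curve passes through $P'$. Hence no such $D$ exists and $\Gamma$ does not lift in ${S'}^\circ$.

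The main obstacle I anticipate is the middle step: showing rigorously that a lift $D$ of $\Gamma$ must have $\pi(D)$ a degree-$d$ plane curve passing through \emph{every} point of $P'$, and not merely a curve with the correct Newton polygon passing through some of them. The subtlety is that the tropicalization $\Gamma$ only sees valuations, so I must argue that the incidence of $j(C)$ with each exceptional divisor $E_k$ (equivalently, the behavior of $z_k$ along $C$ near $(x_k,y_k)$) is forced by $\Gamma$; this is where I would use that $v(p)$ and all the $v(x_k), v(y_k)$ are in general position — no two points of $P'$ share an $x$- or $y$-coordinate valuation — so that the vertical cells of the various $\Trop(V(f'_k))$ are "independent" and the portion of $\Gamma$ near each such cell records the full intersection multiplicity of $\pi(D)$ with the line $x = x'_k$ (resp. $y = y'_k$). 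Making this local-to-global passage precise — perhaps by computing $\deg(p_k\colon D \to \P^1)$ directly from $\Gamma$ via a projection formula for tropical curves and then invoking $D \subset S'$ to pull this back to an intersection number on $S'$ — is the crux of the argument.
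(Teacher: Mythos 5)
Your strategy is essentially the paper's: project to the $(x,y)$-torus to see that a lift $C'$ would give a degree-$d$ plane curve, use the degrees of the coordinate projections $p_k$ to pin down the class of $C'$ as $dH-\sum E'_i$, and contradict the general position of $P'$. The ``crux'' you flag at the end --- that $\deg(p_k\colon C'\to\P^1)$ is forced by $\Gamma$ alone --- is exactly what the paper supplies by citing the elimination theorem of Sturmfels--Tevelev: the degree of $p_k$ restricted to a curve equals the tropical degree of the induced map $p_{k*}$ on its tropicalization, so since $\Trop(C')=\Gamma=\Trop(j(C))$ as weighted complexes, the two algebraic degrees coincide and both equal $d-1$. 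With that single citation in place of your proposed local analysis of vertical cells, your argument closes and matches the paper's.
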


\begin{proof}
Suppose it lifts to a curve $C'$.  By considering the projections $p:S^\circ\rightarrow(\K^*)^2$, $p':S'^\circ\rightarrow(\K^*)^2$, we see that $\Trop(p'(C'))=p_*(\Gamma)=\Trop(p(C))$ and that $p'(C')$ is of degree $d$.

Now, by \cite[Thm 1.1]{StuTevelev}, the degree of $p_k:C\rightarrow \P^1$ is equal to the tropical degree of the map $p_{k*}:\Trop(C)\rightarrow \R^1$.  But this is the same map as $p'_{k*}:\Trop(C')\rightarrow \R^1$ and therefore has the same tropical degree.  This, in turn, is equal to the degree of $p'_k:C'\rightarrow \P^1$.  

Write the class of $C'$ as $dH-\sum a_iE'_i$ for $a_i\in\Z$.  By
intersecting with ${p'}_k^{-1}(q)$ which is of class $H-E'_k$, we see
that 
\[d-1=(dH-\sum a_iE'_i)\cdot(H-E'_k)=d-a_k\]
and hence $a_k=1$.  But since the points $(x'_k,y'_k)$ were chosen in
general position, there is no curve of degree $d$ through all of these
points and hence no curve of class $dH-\sum E'_i$.
\end{proof}

\bibliographystyle{plain}

\begin{thebibliography}{}

\bibitem{Bernstein}
D. Bernstein.
\newblock{The number of roots of a system of equations.}
\newblock{\em Functional Anal. Appl.} 9:1--4, 1975.

\bibitem{BG}
R. Bieri and J.R.J. Groves.
\newblock{The geometry of the set of characters induced by valuations.}
\newblock{\em J. Reine Angew. Math.} 347:168--195, 1984.

\bibitem{BJSST}
T. Bogart, A. Jensen, D. Speyer, B. Sturmfels,  and R. Thomas.
\newblock{Computing tropical varieties.}
\newblock{\em J. Symbolic Comput.} 42:54--73, 2007. 

\bibitem{BrSh}
E. Brugall\'{e} and K. Shaw.
\newblock{Obstructions to approximating tropical curves in surfaces via intersection theory}
\newblock{preprint, \bf{arXiv:1110.0533}.}


\bibitem{EKL}
M. Einsiedler, M. Kapranov, D. Lind.
\newblock{Non-Archimedean amoebas and tropical varieties.} 
\newblock{\em J. Reine Angew. Math.}  601:139--157, 2006.

\bibitem{GM}
A. Gibney and D. Maclagan.
\newblock{Lower and upper bounds for nef cones.}
\newblock{{\em Int. Math. Res. Not.}, to appear, \bf{arXiv:1009.0220}.}

\bibitem{HS}
B. Huber and B. Sturmfels.
\newblock {A polyhedral method for solving sparse polynomial systems.}
\newblock {\em Math. of Computation}, 64:1541--1555, 1995.

\bibitem{KSpace}
E. Katz.
\newblock{An Obstruction to Lifting Tropical Curves in Space}.
\newblock{preprint, \bf{arXiv:1009.1783}}.

\bibitem{KTT}
E. Katz.
\newblock{A Tropical Toolkit.}
\newblock {\em Expos. Math.}, 27:1--36, 2009.

\bibitem{KP}
E. Katz and S. Payne.
\newblock{Realization spaces for tropical fans.}
\newblock{\em Proceedings of the Abel Symposium}, to appear.

\bibitem{KhoNewton}
A. Khovanski\u\i.
\newblock{Newton polyhedra, and toroidal varieties.}
\newblock{\em Funkcional. Anal. i Prilo\v zen.} 11:56--64, 1977.

\bibitem{Mi03}
G. Mikhalkin.
\newblock Enumerative tropical geometry in {${\mathbb{R}^2}$}.
\newblock {\em J. Amer. Math. Soc.} 18:313--377, 2005.

\bibitem{MikhICM}
G. Mikhalkin.
\newblock{Tropical Geometry and its Applications.}
\newblock{\em International Congress of Mathematicians}, vol. 2, 827--852, 2006.

\bibitem{N}
T. Nishinou.
\newblock{Correspondence Theorems for Tropical Curves.}
\newblock preprint, {\bf arXiv:0912.5090}.

\bibitem{NS}
T. Nishinou and B. Siebert.
\newblock{Toric degenerations of toric varieties and tropical curves.}
\newblock {\em Duke Math J.} 135:1--51, 2006.

\bibitem{OP}
B. Osserman and S. Payne.
\newblock{Lifting tropical intersections.}
\newblock preprint {\bf arXiv:1007.1314}.

\bibitem{PFibers}
S. Payne.
\newblock{Fibers of tropicalization.} 
\newblock {\em Math. Z.} 262: 301--311, 2009.

\bibitem{RST}
J. Richter-Gebert, B. Sturmfels, and T. Theobald.
\newblock{First Steps in Tropical Geometry}
\newblock{\em Idempotent mathematics and mathematical physics}, Contemp. Math, 377: 289--317, 2005.

%
\bibitem{Spe07}
D. Speyer.
\newblock {\em Uniformizing tropical curves I: genus zero and one}.
\newblock preprint {\bf arxiv:0711.2677}.

\bibitem{StuSolving}
B. Sturmfels.
\newblock{\em Solving Systems of Polnomial Equations}
\newblock{American Mathematical Society}, 2002.

\bibitem{StuTevelev}
B. Sturmfels and J. Tevelev.
\newblock{Elimination theory for tropical varieties.}
\newblock{\em Math. Res. Lett.} 15:543--562, 2008. 

\bibitem{Vigeland}
M. Vigeland.
\newblock {Smooth tropical surfaces with infinitely many tropical lines.}
\newblock {{\em Ark. Mat.} 48:177--206, 2010.}

\bibitem{Viro}
O. Viro.
\newblock{Gluing of plane real algebraic curves and constructions of curves
of degrees 6 and 7.}
\newblock{Lecture Notes in Math.} 1060, 187-200, 1984.
\end{thebibliography}

\end{document}